\newtheorem{defn}{Definition}[section]
\newtheorem{thm}{Theorem}
\newtheorem{cor}{Corollary}[section]
\newtheorem{prop}{Proposition}[section]
\newtheorem{lem}{Lemma}[section]
\newtheorem{clm}{Claim}[section]
\newtheorem{comp}{Complement}[section]
\newcommand{\R}{{\mathbb{R}}}
\newcommand{\Z}{{\mathbb{Z}}}
\newcommand{\TT}{{\mathscr{T}\hspace{.15mm}\mathscr{T}}}
\newcommand{\CokJ}{{\mathrm{Cok}\;J}}
\newcommand{\tinycirc}{{^{{\,}_{\mathrm{o}}}}}
\begin{document}
\title{Tangential Thickness of Manifolds}
\author{S{\l}awomir Kwasik}
\address{Department of Mathematics\\
Tulane University\\
New Orleans, LA 70118}
\email{kwasik@math.tulane.edu}
\thanks{The first author has been supported by Louisiana BOR award
LEQSF(2008-2011)-RD-A-24 and Simons Foundation Grant 281810.}
\author{Reinhard Schultz}
\address{Department of Mathematics\\
University of California, Riverside\\
Riverside, CA 92521}
\email{schultz@math.ucr.edu}
\thanks{{\it AMS $2010$ Subject classification\/}:\quad
Primary 55P10, 55R40, 57R63 ,57Q50.}

\maketitle

\centerline{\it To our wives: Hanna and Deborah}

\begin{abstract}
A notion of tangential thickness of a manifold is introduced.  An
extensive calculation within the class of lens and fake lens
spaces leads to a classification of such manifolds with
thickness 1, 3 or 2$k$, for $k\geq 1$.  On the other hand,
calculations of tangential thickness in terms of the dimension of
the manifold and the rank of the fundamental group show very
interesting and quite surprising correlations between these
invariants.
\end{abstract}

\section{Introduction}

Given two nonhomeomorphic topological spaces, $X$ and $Y$, it is
often interesting and important to specify necessary or sufficient
conditions for $X\times\mathbb{R}$ and $Y\times\mathbb{R}$ to be
homeomorphic, where $\mathbb{R}$ denotes the real line. More
generally, it is also useful to have criteria for determining
whether $X\times\mathbb{R}^k$ and $Y\times\mathbb{R}^k$ are
homeomorphic for some $k\geq 1$ (\textit{cf.} \cite{Bi} or
\cite{La}). If $X$ and $Y$ are closed manifolds the following
result, due to B. Mazur in the smooth and piecewise linear
categories \cite{Maz}, provides an abstract answer; in the
statement of this result below, CAT refers to the category of
smooth, piecewise linear, or topological manifolds, and a
CAT-isomorphism is a diffeomorphism, piecewise linear homomorphism
or homeomorphism, respectively.

\textbf{Stable Equivalence Theorem:} \textit{Let $M$ and $N$ be
closed} CAT-\textit{manifolds. Then $M\times\mathbb{R}^k$ and
$N\times\mathbb{R}^k$ are} CAT-\textit{isomorphic for some
$k\geq 1$ if and only if $M$ and $N$ are tangentially
homotopy equivalent ($\textit{i.e.}$, there is a homotopy
equivalence $f: M\rightarrow N$ such that the pullback of the
stable tangent bundle/microbundle of $N$ is the stable tangent
bundle/microbundle of $M$)}.

\noindent
\textsc{Refinements of the Stable Equivalence Theorem. }
Given two manifolds $M$ and $N$ satisfying the conditions of the
Stable Equivalence Theorem, it is natural to ask the following:

\noindent
\textbf{Optimal Value Question:} \textit{If $M$ and $N$ are
tangentially homotopy equivalent closed} {CAT-}\textit{manifolds,
what is the least value of $k\geq 0$ such that $M\times\R^k$ and
$N\times\R^k$ are} CAT-\textit{isomorphic?}


The Whitney Embedding and Tubular Neighborhood Theorems imply that
$\dim N + 1 = \dim M + 1$ is a universal upper bound
for $k$ in the smooth category. Standard results for piecewise
linear manifolds \cite{HaeW} and results of Kirby-Siebenmann
\cite{KirSie} imply the analog in the piecewise linear and
topological categories, respectively.  However, there are many
pairs of tangentially homotopy equivalent closed manifolds
$\{M,N\}$ such that $M\times \R^k$ and $N\times \R^k$ are CAT-isomorphic
when $k$ is much smaller than the universal upper bound.  In
particular, there are infinite families of pairs of nonhomeomorphic
lens spaces $\{M,N\}$ such that $M\times \R^3$ and $N\times \R^3$ are
diffeomorphic; some examples are described at the end of Section 5.
A related, and instructive, class of examples involves {\bf fake
lens spaces}, which are manifolds that are homotopy equivalent but
not CAT-isomorphic to some lens space $L^{2n-1}$, where $n\geq 3$
and $\pi_1(L^{2n-1})$ is cyclic of odd order.

\begin{thm}
For $L^{2n-1}$ as above, there are infinitely many pairwise
nonhomemorphic smooth manifolds $M^{2n-1}$ such that
$M^{2n-1}\times\R^3$ and $L^{2n-1}\times\R^3$ are diffeomorphic.
\end{thm}

In contrast, the results of \cite{KwaSch3} show that if
$M$ and $N$ are linear space forms such that $M\times\mathbb{R}^2$
is homeomorphic to $N\times\mathbb{R}^2$, then $M$ and $N$ must be
diffeomorphic.

The examples in the theorem are given by surgery
theory \cite{Wall2}.  Specifically, the latter yields homotopy
equivalences $M_\alpha\to L$ which are
smoothly normally cobordant to the identity such that the
manifolds $M_\alpha$ are distinguished by their Atiyah-Singer
invariants as in \cite{BrPeW}, and the $\pi-\pi$
Theorem of \cite{Wall2} implies that the associated maps
$M_\alpha\times\R^3\to L\times \R^3$ are properly homotopic to
diffeomorphisms.  We should also note that techniques of
S. Cappell and J. Shaneson in \cite{CaSh} yield a result analogous
to Theorem 1 for lens spaces with fundamental group
$\Z_{2^r}$ where $r\geq 2$.

\noindent
\textsc{The formal framework. }
One main objective of this paper is to study
the Optimal Value Question for fake lens spaces.  We will
concentrate on the case of (odd) prime order fundamental groups,
although many of our results hold without this restriction.

Precise statements of our results require a concept
we shall call {\bf tangential thickness}.
In order to use the basic techniques of geometric topology, we
need to express our results in terms of
the homotopy structure set $\mathbb{S}^\mathrm{CAT}(M)$ of
a CAT-manifold $M$
({\it e.g.}, see \cite{KirSie} or \cite{Ran2}),
where CAT denotes one of the usual manifold categories
(smooth, piecewise linear or topological).

\textbf{Main Definition. }
Let CAT denote the smooth, piecewise linear or topological
manifold category, let $(N,f)$ be a CAT-homotopy structure
on $M$ ({\it i.e.\/}, $N$ is a closed CAT-manifold and
$f:N\to M$ is a homotopy equivalence), and let $k$ be a
nonnegative integer.  The CAT-homotopy structure
$(N,f)$ is said to have {\bf tangential thickness} $\leq k$
(with respect to CAT) if and only if there is a
$\mathrm{CAT}$-isomorphism
$$F:N\times\mathbb{R}^k~\longrightarrow~M\times\mathbb{R}^k$$
such that $F$ is homotopic to the following composite:
\[
\begin{CD}
N\times \R^k @>\textrm{projection}>> N @>f>>  M
@>\textrm{zero}>\textrm{slice}> M\times \R^k\\
\end{CD}
\]
If two homotopy structures $(N,f)$ and $(N',f')$
define the same element in the homotopy structure set
$\mathbb{S}^\mathrm{CAT}(M)$ of a CAT-manifold,
then there is a CAT-isomorphism $h:N'\to N$ such that
$f'$ is homotopic to $f\tinycirc h$. Moreover, if $(N,f)$ has
tangential thickness $\leq k$ such that $F$ is the CAT-isomorphism
as above, then there is a corresponding
CAT-isomorphism
\[
F'~~=~~F\tinycirc \left(h\times\mathrm{Id}(\R^k)\,\right)
\]
for $(N',f')$ and hence $(N',f')$ also has tangential thickness
$\leq k$ as defined above.  Therefore we can say that an
equivalence class of homotopy structures has tangential thickness
$\leq k$ if some representative satisfies this property, for if
this is true for one representative then it is true for every
other representative.

Following a standard pattern, we shall say that an element
of the homotopy structure set $\mathbb{S}^\mathrm{CAT}(M)$ has
{\bf tangential
thickness equal to} $k$ if it has tangential thickness $\leq k$
but does not have tangential thickness $\leq k-1$.
One then has an increasing sequence of sets
$$
\{M\}~~=~~\TT^{\mathrm{CAT}}_0(M)~~\subseteq~~
\TT^{\mathrm{CAT}}_1(M)~~\subseteq~~\cdots~~\subseteq~~
\TT^{\mathrm{CAT}}(M)
$$
where $\TT^{\mathrm{CAT}}(M)$ is the set of all manifold
structures $(N,f)$ such that $f$ is a tangential homotopy
equivalence and $\TT^{\mathrm{CAT}}_k(M)$ consists of all homotopy
structures with tangential thickness $\leq k$.
As noted earlier, this sequence stabilizes for
$k\geq\dim M+1$; \textit{i.e.}, we have

\begin{center}
$\TT_{\dim M+1}(M)~~=~~\TT_{\dim M+i+1}(M)~~=~~\TT(M)$\qquad\ \ for $i\geq 1$.
\end{center}

\noindent
\textsc{Specialization to lens spaces. }
Frequently we would like to have a version of tangential thickness
which only involves the manifolds $N$ and $M$, with no mention of
a preferred homotopy equivalence $N\to M$.  This can often be done if
$M$ satisfies a reasonably weak rigidity condition (namely, every
tangential homotopy self-equivalence of $M$ is normally cobordant
to the identity).  In Section 5 we show that this applies to
lens spaces whose fundamental groups are isomorphic to $\Z_p$ where
$p$ is an odd prime.  For such examples one can justify the abuses of
language, {\sl the pair $\{M,N\}$ has tangential thickness $\leq k$
or equal to $k$\/}, because the validity of these statements does
not depend upon the choice of a homotopy equivalence $N\to M$
(see Proposition 5.1; note that the homotopy equivalence must be
tangential, for otherwise a homotopy structure cannot have any
tangential thickness).

\noindent
\textsc{Statements of results. }
Our principal results on tangential thickness for fake
lens spaces can now be stated as follows:

\begin{thm}
Let $n\geq 3$, and let $M^{2n-1}$ be a fake lens space (arbitrary
fake spherical space form).  Then $\TT_1^{\mathrm{Top}}(M)$
consists of manifolds h-cobordant to $M$.  These manifolds are
classified by $Wh(\pi_1(M))$ via realization of Whitehead torsion
by h-cobordisms (i.e., the action of $Wh(\pi_1(M))$ on the equivalence
classes of simple homotopy structures for $M$ is free).
\end{thm}

\begin{thm}
Let $M^{2n-1}$, $n\geq 3$, be a fake lens space with $\pi_1(M^{2n-1})\cong
\Z_k$ where $k$ is odd.  Then
$N^{2n-1}$ is in $\TT_2^{\mathrm{Top}}(M)$ if and only if
$N\times\mathbb{R}$ is properly h-cobordant to
$M\times\mathbb{R}$.  The set $\TT_2^{\mathrm{Top}}(M)\smallsetminus
\TT_1^{\mathrm{Top}}(M)$ is in one-to-one correspondence with
a subset of
$\widehat{H}^0 (\Z_2;\widetilde{K}_0(\mathbb{Z}[\pi_1(M)]))$
via the realization of Whitehead torsion by proper $h$-cobordisms.
\end{thm}

\begin{thm}
Let $M^{2n-1}$, $n\geq 3$, be a fake lens space with
$\pi_1(M)\cong\mathbb{Z}_p$, for $p$ an odd prime.  Then the set
$\TT_3^{\mathrm{Top}}(M)$ is the set of homeomorphism classes of
manifolds normally cobordant to $M$.  The set
$\TT_3^{\mathrm{Top}}(M)\smallsetminus\TT_2^{\mathrm{Top}}(M)$ is in
one-to-one correspondence with the nontrivial elements of the free
abelian group
$\mathbb{Z}^{\frac{p-1}{2}}$.  A manifold $N^{2n-1}$ is in
$\TT_3^{\mathrm{Top}}(M)\smallsetminus\TT_2^{\mathrm{Top}}(M)$ if it is
obtained from $M$ by the action of
$\mathbb{Z}^{\frac{p-1}{2}}\subset \widetilde{L}^h_{2n}(\mathbb{Z}_p)$
on the equivalence classes of homotopy structures for $M$ via realization
of elements of the surgery obstruction group by normal cobordism
starting with $M$ (\textit{cf.} \cite{BrPeW}, \cite{Wall2}).
\end{thm}

%
%

{\bf Note. }
The $\rho$-invariant is the invariant denoted by $\sigma$
in \cite{AtSi}.  Standard results on surgery obstruction groups
(\textit{cf.} \cite{Bak}, p. 388) imply that $\rho$ defines
a homomorphism which is injective on the torsion free subgroup
of $L^h_{2n}(\Z_p)$
in the statement of the theorem, the kernel of $\rho$ is the
torsion subgroup of $L^h_{2n}(\Z_p)$, and the latter
is isomorphic to $\widehat{H}^0 \left(\Z_2;\widetilde{K}_0
(\mathbb{Z}[\pi_1(M)])\,\right)$.  By Theorem 3
we know that the action of this torsion subgroup
yields classes in $\TT_2^{\mathrm{Top}}(M)$.

Our general results on $\TT^{\mathrm{Top}}_k(M)$ for $k\geq
3$ are most conveniently stated in terms of the normal invariant
map
$$
\eta: \mathbb{S}^\mathrm{Top}(M)\longrightarrow [M,G/\mathrm{Top}]
$$
(see \cite{Wall2}) in the Sullivan-Wall surgery exact sequence.

\begin{thm}
Let $M^m$ be a compact unbounded topological manifold of dimension
$m\geq 5$, and let $k\geq 3$.  Assume that the image of
the normal invariant map $\eta$ is a subgroup of
$[M,G/\mathrm{Top}]$, where the group operation on the latter is
given by taking direct sums, and also assume that $M$ is weakly
tangentially (topologically) rigid in the sense of Section $5$.  Then
there is an increasing
sequence of subgroups $\theta_k([M,G/\mathrm{Top}])$, defined for
all $k\geq 3$, with the following properties:
\begin{itemize}
\item[(i)] If $k\geq m+1$, then $\theta_k([M,G/\mathrm{Top}])
= \theta_{k+1}([M,G/\mathrm{Top}])$.

\item[(ii)] If $f: N\rightarrow M$ is a (tangential)
homotopy equivalence of
manifolds, then $N\times\R^k$ and $M\times\R^k$ are homeomorphic
by a homeomorphism which is homotopic to the composite
\[
\begin{CD}
M\times \R^k @>\mathrm{projection}>> M @>f>>  N
@>\mathrm{zero}>\mathrm{slice}> N\times \R^k\\
\end{CD}
\]
if and only if $\eta(f)\in\theta_k([M,G/\mathrm{Top}])$.
\end{itemize}
\end{thm}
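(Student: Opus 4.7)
I would set $\theta_k([M,G/\mathrm{Top}]) := \eta(\TT^{\mathrm{Top}}_k(M))$, the image of the $k$-thick structures under the normal invariant map. The inclusion $\TT^{\mathrm{Top}}_k(M) \subseteq \TT^{\mathrm{Top}}_{k+1}(M)$ gives the ascending chain $\theta_k \subseteq \theta_{k+1}$, and property (i) follows from the stabilization $\TT^{\mathrm{Top}}_k(M) = \TT^{\mathrm{Top}}(M)$ for $k \geq m+1$ noted in the introduction. In property (ii), the forward implication is tautological from this definition.

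The substantive part of (ii) is the converse. Given $f : N \to M$ with $\eta(f) \in \theta_k$, select $(N_0, f_0) \in \TT^{\mathrm{Top}}_k(M)$ with $\eta(f_0) = \eta(f)$. By exactness of the Sullivan-Wall sequence, $(N, f)$ and $(N_0, f_0)$ lie in a common orbit of the natural $L_{m+1}(\pi_1(M))$-action on $\mathbb{S}(M)$. The key claim is that $\TT^{\mathrm{Top}}_k(M)$ is itself a union of such orbits for $k \geq 3$; granting this, the converse of (ii) is immediate. To establish the claim, I would examine the forgetful map $\mathbb{S}(M) \to \mathbb{S}(M \times \R^k, \mathrm{rel}\ \infty)$: this map sends $\TT^{\mathrm{Top}}_k(M)$ to the basepoint (up to the subtle difference between proper and ordinary homotopy, reconciled via tangential rigidity through Proposition \ref{prop:welldef}), and intertwines the $L_{m+1}$-action on the source with the $L_{m+k+1}$-action on the target through the stabilization $L_{m+1} \hookrightarrow L_{m+k+1}$. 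For $k \geq 3$, this stabilized action fixes the basepoint orbit by a proper-engulfing argument in sufficiently high codimension (of the type underlying \cite{KirSie}), whence $\TT^{\mathrm{Top}}_k(M)$ is preserved by the $L_{m+1}$-action on $\mathbb{S}(M)$.

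The subgroup property of $\theta_k$ then combines the hypothesis that the image of $\eta$ is a subgroup of $[M,G/\mathrm{Top}]$ with a direct sum construction: given $(N_i, f_i) \in \TT^{\mathrm{Top}}_k(M)$ with $\eta(f_i) = x_i$, one forms $(N_1 \#_M N_2, f_1 \# f_2)$ by connected sum at a small disk in $M$, which remains in $\TT^{\mathrm{Top}}_k(M)$ (the product homeomorphisms of the summands can be arranged to agree off the disk) and has normal invariant $x_1 + x_2$ by additivity of $\eta$ under connected sum; additive inverses arise by reversing orientation. The principal obstacle is the triviality assertion above --- that the stabilized $L_{m+1}$-action on the basepoint orbit of $\mathbb{S}(M \times \R^k, \mathrm{rel}\ \infty)$ vanishes for $k \geq 3$. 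This reflects exactly the dimension threshold at which proper surgery becomes available to absorb middle-dimensional obstructions on the added $\R$-factors, and is consistent with the finer identifications of $\TT^{\mathrm{Top}}_k(M)$ for $k = 1, 2, 3$ provided by Theorems 3--5.
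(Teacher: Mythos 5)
Your proposal is genuinely different from the paper's argument: the paper defines $\theta_k([M,G/\mathrm{Top}])$ via Proposition 3.2 as the set of normal-invariant classes that simultaneously lie in the image of $\eta$ and lift to $[M,SG_k]$, whereas you define $\theta_k$ directly as $\eta(\TT^{\mathrm{Top}}_k(M))$ and attempt to verify property (ii) by showing $\TT^{\mathrm{Top}}_k(M)$ is a union of orbits of the Wall realization action. The two definitions do coincide, but your route has two substantive gaps.

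First, the orbit-preservation claim is the heart of the converse in (ii), and your justification for it --- that a ``proper-engulfing argument in sufficiently high codimension (of the type underlying \cite{KirSie})'' makes the stabilized $L_{m+k+1}$-action trivial on the basepoint orbit of $\mathbb{S}(M\times\R^k,\mathrm{rel}\ \infty)$ --- is not the right mechanism, and \cite{KirSie} is not the relevant citation. The actual reason the surgery obstruction can always be absorbed for $k\geq 3$ is the $\pi$-$\pi$ theorem of \cite{Wall2}: the pair $(N\times D^k, N\times S^{k-1})$ has $\pi_1(N\times S^{k-1})\cong\pi_1(N\times D^k)$, so the normal-invariant map $\mathbb{S}(N\times D^k)\to [N\times D^k, G/\mathrm{Top}]$ is a bijection, after which block-bundle (or topological regular neighborhood) theory identifies the relevant lifting condition as lifting to $[M,SG_k]$. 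That is exactly what Proposition 3.2 does, and you do not actually avoid it by invoking engulfing; you leave a hole where that argument should be.

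Second, your subgroup argument is unjustified at its key step. You assert that the connected sum $(N_1 \#_M N_2, f_1\# f_2)$ remains in $\TT^{\mathrm{Top}}_k(M)$ because ``the product homeomorphisms of the summands can be arranged to agree off the disk,'' but those homeomorphisms $N_i\times\R^k\to M\times\R^k$ are global and there is no local-support argument available to splice them. The paper sidesteps this entirely: it shows the image of $[M,SG_k]$ in $[M,G/\mathrm{Top}]$ is a subgroup by pure homotopy theory (the stabilization $[M,SG_k]\to[M,SG]$ is a homomorphism for the composition products, composition and direct sum agree on $[M,SG]$, and $[M,SG]\to[M,G/\mathrm{Top}]$ is additive for Whitney sum), and then intersects with the image of $\eta$, which is a subgroup by hypothesis. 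You should prove the subgroup property at the level of $SG_k$-lifts rather than by geometric connected sum.

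Property (i) in your write-up is fine, and the observation that tangential rigidity (Proposition \ref{prop:welldef}) is needed to make membership in $\TT^{\mathrm{Top}}_k(M)$ independent of the choice of tangential homotopy equivalence is correct and matches the paper's framing. But without Proposition 3.2 (or a correct substitute for it) the converse in (ii) and the subgroup claim are both unproved.
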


In view of the first conclusion in this theorem, it is meaningful
to write $\theta([M,G/\mathrm{Top}]) = \theta_k([M,G/\mathrm{Top}])$
if $k\geq m+1$.

\textbf{Remarks.  1.}  We are assuming that the image of $\eta$ is a
subgroup with respect to direct sum in order to avoid possible
problems with the nonadditivity of the surgery obstruction map
$\sigma: [M,G/\mathrm{Top}]\rightarrow L^h_m(\pi_1(M^m),
w_1)$, where the operation on the domain is given by taking
direct sums.  One easy way to ensure that the image of $\eta$ is a
subgroup is to assume that $L^h_m(\pi_1(M^m),w_1) = 0$ so
that $\eta$ must be onto.  This condition holds if $\pi_1(M^m)$
has odd order and $m$ is odd \cite{Wall3}, and therefore Theorem 5
is valid for the examples of primary interest in this paper.

\textbf{2. }  If $M$ is not weakly tangentially rigid and $k\geq 3$, then
Propositions 3.1 and 3.2 yield weaker conclusions about tangential
homotopy equivalences $f:M\to N$ for which the corresponding maps
from $M\times \R^k$ to $N\times R^k$ are homotopic to homeomorphisms.

Theorem 5 implies that the tangential thickness sets
$\TT^{\mathrm{Top}}_k(M)$ are the
inverse images of the normal invariant sets
$\theta_k([M,G/\mathrm{Top}])$
with respect to the normal invariant map $\eta$.  For example,
Theorem 4 translates into the statement
$\theta_3([M,G/\mathrm{Top}])=0$ if $M$ satisfies the hypotheses
in that result.  More generally, this allows us to characterize
the differences between $\TT^{\mathrm{Top}}_k(M)$ and
$\TT^{\mathrm{Top}}_{k-1}(M)$ in terms of the nonzero elements in
the subquotient groups
$\theta_{k+1}([M,G/\mathrm{Top}])/\theta_{k}([M,G/\mathrm{Top}])$.

If $M^{2n-1}$ is a fake lens space with fundamental group $\Z_p$,
where $p$ is an odd prime and $n\geq 3$, then we shall see
that the groups $\theta_k([M,G/\mathrm{Top}])$ are all cyclic
$p$-groups (possibly trivial) and hence the same is true of the
subquotients
$\theta_k([M,G/\mathrm{Top}])/\theta_{k-1}([M,G/\mathrm{Top}])$.
We shall prove that every such subquotient is either trivial or
isomorphic to $\Z_p$.  Furthermore, we shall prove that
$\theta_k([M,G/\mathrm{Top}]) = \theta([M,G/\mathrm{Top}])$
well below the range of the Stable Equivalence Theorem
in part $(i)$ of Theorem 5 (in particular, the equation holds
when $k>{n}/(p-1)\,$), and in about half of the
remaining cases the subquotient is isomorphic to $\Z_p$.  We shall
begin with the cases that are the simplest to describe:

\begin{thm}
Let $p$ be an odd prime, let $n\geq 3$ and let $M^{2n-1}$ be
a fake lens space with fundamental group $\Z_p$.  Assume further
that $n\not\equiv 0$ \textrm{mod} $p-1$.  Then the subquotients
$$\theta_k([M,G/\mathrm{Top}])/\theta_{k-1}([M,G/\mathrm{Top}]),\quad
\theta_{2j+2}([M,G/\mathrm{Top}])/\theta_{2j}([M,G/\mathrm{Top}])
$$
are given as follows:
\begin{itemize}
\item[(i)]
$\theta_{k+1}([M,G/\mathrm{Top}])/\theta_k([M,G/\mathrm{Top}]) =
0$ if $k\geq 2\left[\frac{n}{p-1}\right] +2$, where $[ - ]$
denotes the greatest integer function.

\item[(ii)] If $k=2j$ and $1\leq j\leq
\left[\frac{n}{p-1}\right]$ then
$\theta_{2j+2}([M,G/\mathrm{Top}])/\theta_{2j}([M,G/\mathrm{Top}])
\cong\Z_p$; we set $\theta_2([M,G/\mathrm{Top}])=0$ by definition.

\item[(iii)] If $2\leq j\leq \left[\frac{n}{p-1}\right]$,
then either $\theta_{2j+1}([M,G/\mathrm{Top}]) =
\theta_{2j}([M,G/\mathrm{Top}])$ or else\\
$\theta_{2j+1}([M,G/\mathrm{Top}]) =
\theta_{2j+2}([M,G/\mathrm{Top}])$.
\end{itemize}
\end{thm}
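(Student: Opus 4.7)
The strategy is to work $p$-locally and identify $[M,G/\mathrm{Top}]$ with $\widetilde{KO}{}^0(M)_{(p)}$, using Sullivan's theorem $(G/\mathrm{Top})_{(p)} \simeq BO_{(p)}$ for odd $p$ together with the fact that $|\pi_1(M)| = p$ forces $[M,G/\mathrm{Top}]$ to be $p$-local. The filtration $\{\theta_k\}$ will then be matched with a Bott/Adams filtration on this $KO$-group.

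First, using tangential rigidity (Proposition \ref{prop:rigid}) together with the proper (bounded) surgery exact sequence for $M\times \R^k$, I would characterize $\theta_k \subseteq \widetilde{KO}{}^0(M)_{(p)}$ in surgery-theoretic terms: elements in $\theta_k$ are exactly the normal invariants that admit lifts to the proper structure set of $M\times \R^k$ compatible with the product projection. The relevant surgery obstructions take values in $L^h_*(\Z_p)_{(p)}$, which for odd $p$ vanishes except in degrees $\equiv 0 \pmod 4$; this fact together with $4$-fold Ranicki periodicity will drive the $2$-periodic pattern displayed in the theorem.

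Next, I would use the Adams splitting $KO_{(p)} \simeq \bigoplus_{j=0}^{(p-3)/2} \Sigma^{4j} F$, where $\pi_{2(p-1)\ell}(F) = \Z_{(p)}$ and the remaining homotopy groups of $F$ are zero. The Atiyah--Hirzebruch spectral sequence for each Adams summand, applied to $M = L^{2n-1}(p)$, collapses at $p$ (killed by Adams operations), and since $H^{\mathrm{ev}}(L^{2n-1}(p);\Z_p)=\Z_p$ in each degree $2,4,\ldots,2n-2$, each level $4j+2(p-1)\ell \in \{2,4,\ldots,2n-2\}$ contributes exactly one $\Z_p$-summand to $\widetilde{KO}{}^0(M)_{(p)}$. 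The hypothesis $n\not\equiv 0 \pmod{p-1}$ rules out the degenerate case where the top cohomology class $H^{2n-2}$ falls on a new Bott-shifted degree, so the total count of summands is precisely $[n/(p-1)]$. Matching Adams levels to the $\theta_k$-filtration (each new Adams level corresponds to an increment $k\mapsto k+2$, via the surgery-obstruction analysis of the previous paragraph) yields (ii), and (i) follows once all $[n/(p-1)]$ levels have been exhausted. For (iii), since $L^h_{2n-1+k}(\Z_p)_{(p)}$ is nontrivial only in one residue class mod $4$, the odd step $\theta_{2j+1}$ cannot capture a new $\Z_p$-summand independent of its even neighbors and must therefore coincide with one of $\theta_{2j}$ or $\theta_{2j+2}$.

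The principal obstacle is the first step: pinning down $\theta_k$ as the kernel (or preimage) of an explicit surgery obstruction rather than merely as the image of a structure set, including careful bookkeeping of the $h$- versus $s$-decoration, of the Ranicki periodicity isomorphism $L^h_{*+4}(\Z_p)\cong L^h_*(\Z_p)$, and of the compatibility of these with the Adams-split filtration on $\widetilde{KO}{}^0(M)_{(p)}$. The hypothesis $n\not\equiv 0\pmod{p-1}$ enters both in the Adams-summand count and in this compatibility analysis, where it prevents the top cell of $M$ from producing an exceptional contribution that would disturb the clean $2$-periodic pattern.
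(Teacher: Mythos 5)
Your proposal diverges fundamentally from the paper's argument, and the divergence is not a stylistic choice but a misidentification of what actually controls the filtration. You propose that the jump $\theta_{2j}\subsetneq\theta_{2j+2}$ is driven by surgery obstructions in $L^h_*(\Z_p)_{(p)}$ and Ranicki $4$-fold periodicity, and that the levels of the filtration should be matched to the Adams summands of $KO_{(p)}$. Neither mechanism is what is at work. By the $\pi$-$\pi$ theorem, the surgery obstruction for the relevant normal maps over $M\times D^k$ ($k\geq 3$) is automatically zero, so $L$-groups never enter the determination of $\theta_k$; what remains, by Proposition 3.2 of the paper, is whether the normal invariant lifts from $[M,SG]$ to $[M,SG_k]$, i.e.\ a block-bundle (or topological regular neighborhood) triviality question. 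The $\theta_k$ filtration is therefore an \emph{unstable desuspension} filtration on stable cohomotopy, not an $L$-theoretic one, and there is no natural step-by-step correspondence between Adams levels (spaced by $2(p-1)$) and the increments $k\mapsto k+2$.

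The actual engine of the theorem is the Cohen--Moore--Neisendorfer exponent theorem, used in Proposition 6.3: for the subgroup $T(L)\subseteq\{L,S^0\}_{(p)}$, a class desuspends to $[S^{2k-1}L,S^{2k-1}]$ precisely when its order divides $p^k$, the ``only if'' direction being the exponent bound and the ``if'' direction coming from the connectivity of the stabilization $S^{2m+1}_{(p)}\to Q_0 S^{2m+1}_{(p)}$. Combined with Propositions 3.4 (identifying lifts to $[X,SG_{2k}]$ with $(2k{-}1)$-fold desuspensions when the order is a $p$-power) and 6.1/6.2 (reconciling the loop-sum and composition-product group structures), this gives Proposition 7.1: each subquotient $\theta_{2k}/\theta_{2k-2}$ is either $0$ or $\Z_p$, and each odd level coincides with an adjacent even one, by elementary order counting. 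Theorem 7 then drops out because $[L,J_p]$ is cyclic of order $p^{\lfloor n/(p-1)\rfloor}$, it maps isomorphically onto the relevant image in $[L,G/\mathrm{Top}]$ when $n\not\equiv 0\bmod p-1$ (Proposition 5.5), and the product of the orders of the subquotients must equal $p^{\lfloor n/(p-1)\rfloor}$, forcing every factor to be $p$.

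In short: your structural guess about the size and shape of the answer (a cyclic $p$-group of order $p^{\lfloor n/(p-1)\rfloor}$ filtered by $\Z_p$-steps) is right, and your use of the Adams splitting and the Atiyah--Hirzebruch computation of $\widetilde{KO}(L)_{(p)}$ matches what the paper does in Section 5 to identify the target. But the heart of the argument --- why each increase of $k$ by $2$ liberates exactly one more factor of $p$ --- cannot be obtained from $L$-group periodicity, which is invisible here; it requires the unstable exponent theorem. Without replacing your surgery-obstruction mechanism with the desuspension/exponent analysis (Propositions 3.4, 6.1--6.3, 7.1), the proposal has a genuine gap at its central step.
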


There is a similar but slightly weaker conclusion when $n\equiv 0$
mod $p-1$.

\begin{thm}
Suppose we are in the same setting as in Theorem $6$, but $n\equiv
0$ \textrm{mod} $p-1$.  Then $(i)$ and $(iii)$ remain valid.  However, if
$k=2j$ and
$$1\leq j\leq \left[\frac{n}{p-1}\right]$$
then
$\theta_{k+2}([M,G/\mathrm{Top}])/\theta_{k}([M,G/\mathrm{Top}])
\cong\Z_p$ except for precisely one value $j_0$ of $j$.
\end{thm}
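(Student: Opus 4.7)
The plan is to run the proof of Theorem 7 essentially verbatim and isolate the single step that uses $n\not\equiv 0 \mod p-1$. As in that argument, one first localizes at the odd prime $p$: Sullivan's splitting yields
$$
[M,G/\mathrm{Top}]_{(p)} \;\cong\; \bigoplus_{i\geq 1} H^{4i}(M;\Z_{(p)}),
$$
and each nontrivial summand is a copy of $\Z_p$ for $1\leq 2i\leq n-1$. The filtration $\theta_k$ is then identified by a cutoff function $i\mapsto k_i$ which records the least $k$ such that the $i$-th summand lies in $\theta_k([M,G/\mathrm{Top}])$. For $\Z_p$ fake lens spaces, $k_i$ is controlled by the residue of $2i$ modulo $p-1$ together with the quotient $\lfloor 2i/(p-1)\rfloor$, and a direct computation shows that $k_i$ is always even and bounded above by $2\lfloor n/(p-1)\rfloor+2$; this yields conclusion $(i)$.

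Conclusion $(ii)$ then reduces to matching the $\lfloor n/(p-1)\rfloor$ nontrivial summands with the even subquotient degrees $2,4,\dots,2\lfloor n/(p-1)\rfloor$ via the cutoff map. In the generic case $n\not\equiv 0 \mod p-1$, this matching is a bijection, so each subquotient receives exactly one $\Z_p$; this is exactly what the proof of Theorem 7 establishes. When $(p-1)\mid n$, the top cohomological degree $2n-2$ forces a single collision: two values of $i$ map to the same cutoff, or equivalently one summand that would generically appear falls just outside the admissible cohomological range of $M$. The net effect is that exactly one even value $j_0$ receives no summand, so its subquotient is trivial, while all other values in the range still produce a $\Z_p$. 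Uniqueness of $j_0$ follows from the fact that the congruence $(p-1)\mid n$ produces precisely one such coincidence in the modular arithmetic of the cutoff function.

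The main obstacle will be pinning down $j_0$ explicitly and excluding further degeneracies in the filtration. This reduces to a careful boundary analysis of the cutoff $k_i$ near the top cohomological degree of $M$, using Sullivan's explicit $p$-local characteristic classes for $G/\mathrm{Top}$ (governed by Bernoulli-number divisibilities and Adams operations) together with the vanishing $H^{2i}(M;\Z_p)=0$ for $2i\geq 2n-1$. Once $j_0$ is located, the realization half of the statement---that every nontrivial subquotient is attained by a normal cobordism with surgery obstruction in $L^s_{2n}(\Z_p)$, and that the filtration steps are detected by the invariants of Theorems 4 and 5---is inherited unchanged from the proof of Theorem 7.
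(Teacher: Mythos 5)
Your high-level strategy---rerun Theorem 7's counting argument and locate the single place where $n\not\equiv 0\bmod(p-1)$ is used---is the right one, but the intermediate machinery you invoke does not match what actually happens for $\Z_p$ lens spaces, and the ``collision'' mechanism you describe is not the true reason one subquotient dies.

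First, the Sullivan splitting you write down is not correct at an odd prime. It is only at the prime $2$ that $G/\mathrm{Top}$ localizes to a product of Eilenberg--MacLane spaces detected by $H^{4i}$ and $H^{4i+2}$; at an odd prime $p$, Sullivan's theorem gives $G/\mathrm{Top}_{(p)}\simeq BSO_{(p)}$, so that $[M,G/\mathrm{Top}]_{(p)}\cong\widetilde{KO}(M)_{(p)}$, not $\bigoplus_i H^{4i}(M;\Z_{(p)})$. For a lens space $L^{2n-1}$ this $KO$-group is a finite abelian $p$-group that is emphatically not elementary abelian, so the claim that ``each nontrivial summand is a copy of $\Z_p$'' is false, and the whole cutoff-function picture (one $\Z_p$ summand per even degree, with a collision in the modular arithmetic) has no analogue here. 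The paper instead works entirely inside a single cyclic subgroup: the image $T'(L)\subseteq[L,G/\mathrm{Top}]$ of the cyclic group $T(L)\cong\mathrm{Image}\,[B\Z_p,J_p]$, which Propositions 5.5 and 6.1--6.3 identify with $E_0K_{(p)}(L)$ and whose order controls everything.

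Second, the actual source of the exceptional value $j_0$ is not a ``boundary collision'' of cohomology degrees but the nonvanishing of $\pi_{2n-1}(J_p)$. When $(p-1)\mid n$, write $n=p^\nu(p-1)r$ with $r$ prime to $p$; then $\pi_{2n-1}(J_p)\cong\Z_{p^\nu}\neq 0$, and by Proposition 5.5(ii) this group is the kernel of $[L,J_p]\to[L,G/\mathrm{Top}]$, so that $T(L)\to T'(L)$ has kernel of order $p$. Consequently $|T'(L)|=p^{m-1}$ rather than $p^m$, where $m=\left[\frac{n}{p-1}\right]$. Proposition 7.1 (which rests on the Cohen--Moore--Neisendorfer exponent theorem via Propositions 6.3 and 6.5, not on Bernoulli-number divisibility of characteristic classes) shows each subquotient $\theta_{2k}/\theta_{2k-2}$ has order $1$ or $p$; there are $m$ of them in the relevant range, and their orders multiply to $|T'(L)|=p^{m-1}$, so exactly one is trivial. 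This clean counting replaces your ``uniqueness of $j_0$ follows from... precisely one coincidence in the modular arithmetic,'' which as stated is an assertion, not an argument. To repair your proposal you would need to drop the cohomological splitting, work with the cyclic image $T'(L)$ and the kernel $\pi_{2n-1}(J_p)$, and invoke Proposition 7.1 for the order-$1$-or-$p$ dichotomy.
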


We shall say more about the exceptional value in Section 7;
unfortunately, our methods only yield limited information about
the exceptional value $j_0$, but we shall provide some evidence
for conjecturing that $j_0=1$ in all cases.

Here is a more qualitative consequence of the preceding results:

\begin{thm}
Let $L^{2n-1}$ be a lens space with $n\geq 3$.
\begin{itemize}
\item[(i)] If $n\not\equiv 0$ {\rm mod} $p-1$, then for each $j$ such
that $1\leq j\leq \left[\frac{n}{p-1}\right]$, there exist
manifolds $L_j$ tangentially homotopy equivalent to $L$ such that
$L_j\times\R^{2j}$ and $L\times\R^{2j}$ are not homeomorphic but
$L_j\times\R^{2j+2}$ and $L\times\R^{2j+2}$ are homeomorphic.

\item[(ii)] If $n\equiv 0$ {\rm mod} $p-1$, then the same conclusion
holds for all but one value of $j$ such that $1\leq
j\leq \left[\frac{n}{p-1}\right]$.

\item[(iii)] If $N$ is a fake lens space which is tangentially
homotopy equivalent to $L$ and $k\geq
2\left[\frac{n}{p-1}\right] +2$, then $L\times\R^k$ and $N\times\R^k$
are homeomorphic.
\end{itemize}
\end{thm}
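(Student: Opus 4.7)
The plan is to deduce all three parts from Theorems 6, 7, and 8 together with the standard surgery-theoretic translation between tangential thickness and normal invariants. First I would note that lens spaces with $\Z_p$ fundamental group are tangentially rigid (Proposition \ref{prop:rigid}), so the hypotheses of Theorem 6 hold and the set $\TT^{\mathrm{Top}}_k(L)$ is identified with $\eta^{-1}(\theta_k([L,G/\mathrm{Top}]))$. Next, because $p$ is odd and $\dim L=2n-1$ is odd, Wall's computation gives $L^h_{2n-1}(\Z_p)=0$, so the normal invariant map $\eta\colon \mathbb{S}(L)\to [L,G/\mathrm{Top}]$ is surjective (as highlighted in Remark 1 following Theorem 6). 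This surjectivity is what will let us realize normal invariants as actual manifolds rather than mere Poincar\'e complexes.

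For part (i), I would fix $j$ with $1\leq j\leq \left[\frac{n}{p-1}\right]$ and invoke Theorem 7(ii) to choose $x\in \theta_{2j+2}([L,G/\mathrm{Top}])\smallsetminus \theta_{2j}([L,G/\mathrm{Top}])$. By surjectivity of $\eta$ there is a homotopy equivalence $f_j\colon L_j\to L$ with $\eta(f_j)=x$, and since $x$ lies in a $\theta_k$ (hence in the stable value $\theta$), $f_j$ is automatically tangential. Theorem 6(ii) then gives $L_j\times \R^{2j+2}\cong L\times \R^{2j+2}$, while $L_j\times \R^{2j}$ is not in $\TT^{\mathrm{Top}}_{2j}(L)$; tangential rigidity of $L$ upgrades this to non-homeomorphism of $L_j\times \R^{2j}$ with $L\times \R^{2j}$ as bare topological spaces. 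The manifold $L_j$ is a genuine fake lens space because its universal cover is a closed simply connected manifold homotopy equivalent to $S^{2n-1}$, hence homeomorphic to $S^{2n-1}$ by the high-dimensional Poincar\'e conjecture (available since $2n-1\geq 5$). Part (ii) follows by the same argument with Theorem 8 replacing Theorem 7(ii), which excludes the one value $j_0$. For part (iii), Theorem 7(i) says $\theta_{k+1}/\theta_k=0$ as soon as $k\geq 2\left[\frac{n}{p-1}\right]+2$, so $\theta_k=\theta$ in this range; any tangential equivalence $N\to L$ therefore has $\eta(N\to L)\in\theta=\theta_k$, and Theorem 6(ii) immediately yields $N\times \R^k\cong L\times \R^k$.

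The main obstacle is really hidden inside Theorems 6, 7, and 8; at the present level, the result is essentially a repackaging. The two points that still deserve some care are the passage from ``$L_j$ is not in $\TT^{\mathrm{Top}}_{2j}(L)$'' (a statement about a specified composite being homotopic to a homeomorphism) to ``$L_j\times \R^{2j}$ not homeomorphic to $L\times \R^{2j}$'' as raw topological spaces, which relies on tangential rigidity in the form discussed before Theorem 6, and the identification of the realizing manifolds as genuine free $\Z_p$-quotients of $S^{2n-1}$, which uses both $L^h_{2n-1}(\Z_p)=0$ and the high-dimensional Poincar\'e conjecture.
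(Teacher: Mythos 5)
Your argument is correct and is essentially the proof the paper intends: the authors state in the introduction that Section~7 proves Theorems 5--9, but the section only gives explicit proofs for Theorems 5--8, leaving Theorem 9 to be read off from Theorems 6, 7, and 8 together with the rigidity discussion surrounding Propositions \ref{prop:rigid} and \ref{prop:welldef}. You have filled in the intended deduction, including the two subtler points: (a) that the surjectivity of $\eta$ (from $L^h_{2n-1}(\Z_p)=0$) lets one realize a class in $\theta_{2j+2}\smallsetminus\theta_{2j}$ by an actual homotopy structure $(L_j,f_j)$, which is automatically tangential because lying in any $\theta_k$ already forces the normal invariant to lift to $[L,SG]$; and (b) that the passage from \lq\lq$(L_j,f_j)\notin\TT^{\mathrm{Top}}_{2j}(L)$\rq\rq\ to the unqualified non-homeomorphism of $L_j\times\R^{2j}$ and $L\times\R^{2j}$ uses tangential rigidity via Proposition \ref{prop:welldef}, since an arbitrary homeomorphism of the products induces a tangential homotopy equivalence $L_j\to L$ (tangent microbundles pull back under homeomorphisms), and rigidity then forces $(L_j,f_j)$ itself into $\TT^{\mathrm{Top}}_{2j}(L)$, a contradiction. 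The remark about $L_j$ being a genuine fake lens space via the high-dimensional Poincar\'e conjecture is correct, though not strictly required by the statement of Theorem 9, which only asks for manifolds tangentially homotopy equivalent to $L$.
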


\noindent
\textsc{Outline of this paper. }
The proofs of the results stated above
will appear in Sections 2--7 below.  In
Section 2 we shall use surgery-theoretic methods ({\it cf.}
\cite{KwaSch3}) to prove
Theorems 2 and 3.  Section 3 gives a surgery-theoretic criterion
for two manifolds to have tangential thickness $\leq k$,
where $k\geq 3$;  most of this material is surely well known,
but we include it since it is fundamental to our work and
difficult to extract from literature.  In the case of
odd-dimensional $\Z[\frac{1}{2}]$ homology spheres, these results
will be restated very simply in terms of desuspending classes in
the stable cohomotopy groups of such manifolds (see Proposition 3.4).
We specialize the general setting of
Section 3 to fake lens spaces in Section 5; this uses a variety of
results about the structure of the classifying spaces for surgery
theory (the standard reference being \cite{MadMilJ}).
In Section 5 we shall discuss some facts about lens spaces and
criteria which lead to simpler discussion of tangential thickness
for such objects in most cases; at the end of the section we also
discuss some questions about tangential thickness which deal
exclusively with genuine lens spaces.  Section 6
analyzes the cohomotopy desuspension questions from Section 3 for
the case of $\Z_p$ lens spaces using the work of F. Cohen, J.C.
Moore and J.  Neisendorfer (\textit{e.g}, see \cite{CohMoNei1},
\cite{CohMoNei2}, and \cite{Nei}) on exponents of
homotopy groups. We shall bring everything together in Section 7
to prove Theorems 4--8. Finally, Section 8.1 contains some comments
and remarks concerning smooth tangential thickness.
Some of the techniques and ideas of
this paper were applied in \cite{BKS} and \cite{BKS1} when studying and
classifying open complete manifolds of nonnegative curvature (see also
\cite{Ott} for further results on such questions).  By
the results of J. Cheeger and D. Gromoll \cite{ChGr}, such manifolds are
diffeomorphic to the total space of a normal bundle to a compact
locally geodesic submanifold called a \textbf{soul}. An obvious variation
on the notion of the Optimal Value Question in this case leads to
a notion of
\textsl{twisted} tangential thickness. The twisted tangential thickness and
a sample of applications of our techniques to the topology of nonnegatively curved
manifolds are briefly discussed in Section 8.2.

The methods and techniques employed in this paper are a mixture
of geometric and algebraic
considerations involving $K$--theory, surgery and homotopy theory.
Perhaps the main novelty in the paper is the  study of tangential
normal maps represented by $\{M, S^0\}$, by finding the least
$r$ such that a given class desuspends to $[S^r M,S^r]$ ({\it i.e.},
the sphere of origin for the class).
Such homotopy--theoretic problems are often important, interesting
and challenging, and they have been studied extensively from many
different viewpoints (c.f. \cite{Nei}, \cite{GroZi}, \cite{Ed}). It
seems likely that such approaches can yield applications to a variety
of questions involving classification of manifolds.

\noindent
\textsc{Acknowledgment. }
We are grateful to the referee for numerous comments and suggestions
which have improved this paper in several ways.

\section{Results in Low Codimensions}

In this section and the next, we shall derive the basic surgery
theoretic conditions for determining the tangential homotopy
equivalences $h: M\rightarrow N$ such that
$h\times\mathrm{Id}_{\R^k}$ is homotopic to a
homeomorphism.  As in many other situations within geometric
topology, the cases with codimension $k\geq 3$ differ greatly
from the cases where $k=1$ or 2, and in this section we shall
dispose of the latter cases.

\begin{proof}{\sf (Theorem 2)}\quad
Let $M^{2n-1}~(n\geq 3)$ be a fake spherical space form, and let
$f: N^{2n-1}\rightarrow M^{2n-1}$ a tangential homotopy
equivalence.  Suppose $N\times\mathbb{R}$ and $M\times\mathbb{R}$
are isomorphic.  Then it follows that $N$ and $M$ are
$h$-cobordant.  The action of the Whitehead group $Wh(\pi_1(M))$
on the equivalence classes of simple homotopy structures for
$M$ is free by the main result of \cite{KwaSch2}.

On the other hand, if $(W;N, M)$ is an $h$-cobordism between $N$
and $M$, then $W\times S^1$ is an $s$-cobordism between $N\times
S^1$ and $M\times S^1$. Thus $M\times S^1$ is isomorphic to
$N\times S^1$ and hence $M\times\mathbb{R}$ and
$N\times\mathbb{R}$ are isomorphic as well.
\end{proof}

\begin{proof}{\sf (Theorem 3)}\quad
Let $\pi\cong\pi_1(M^{2n-1})$ be the fundamental group of
$M^{2n-1}$. If $N^{2n-1}\in\TT_2(M)$, then $N$ is a fake lens
space and there exists a homeomorphism
$h:N\times\mathbb{R}^2\rightarrow M\times\mathbb{R}^2$. This
yields an $h$-cobordism $W$ between $N\times S^1$ and $M\times
S^1$ (\textit{cf.} \cite{KwaSch3}).  By taking infinite cyclic
coverings, one gets a proper $h$-cobordism $\widetilde{W}$
between $N\times\mathbb{R}$ and $M\times\mathbb{R}$.

Conversely, if there is a proper $h$-cobordism $V$ between
$N\times\mathbb{R}$ and $M\times\mathbb{R}$, then $V\times S^1$ is
a product cobordism between $N\times\mathbb{R}\times S^1$ and
$M\times\mathbb{R}\times S^1$.  In particular,
$N\times\mathbb{R}\times S^1\approx M\times\mathbb{R}\times S^1$
and hence $N\times\mathbb{R}\times\mathbb{R}\approx
M\times\mathbb{R}\times\mathbb{R}$ (\textit{i.e.},
$N\times\mathbb{R}^2\approx M\times\mathbb{R}^2$).

Now, let $\tau_0\in Wh(\widetilde{W}, M\times\mathbb{R})\cong
\widetilde{K}_0(\mathbb{Z}[\pi])$ (\textit{cf.} \cite{Sie1}) be a
proper Whitehead torsion of this proper $h$-cobordism. In analogy with
the compact case (\textit{cf.} \cite{Coh}) there is an
involution on $Wh(\widetilde{W})$ and a duality between $\tau_0\in
Wh(\widetilde{W}, M\times\mathbb{R})$ and
the Whitehead torsion of the inclusion of other end $\tau_1\in
Wh(\widetilde{W}, N\times\mathbb{R})$, and it is given by $\tau_1 =
(-1)^{\dim (M\times\mathbb{R})}\tau_0^*$. Hence $\tau_1 =
\tau_0^*$.

Let $f: N\times\mathbb{R}\rightarrow M\times\mathbb{R}$ be a
proper homotopy equivalence given by the composition of the
inclusion $i$ and retraction $r$:
\[
\begin{CD}
N\times\mathbb{R} @>i>> \widetilde{W} @>r>> M\times\mathbb{R}
\end{CD}
\]
It follows that $\tau(f) = \tau_0 - \tau_1 =
\tau_0 - \tau_0^*$.  However, $f$ is
properly homotopic to a map $f_0\times\textrm{Id}_{\mathbb{R}}:
N\times\mathbb{R}\rightarrow M\times\mathbb{R}$ (\textit{cf.}
\cite{Wall1}, Lemma 2, p. 61), with $f_0:N\rightarrow M$. In particular, as
$f_0\times\textrm{Id}_{S^1}: N\times S^1\rightarrow M\times S^1$
is a simple homotopy equivalence (\textit{cf.} \cite{Coh}), so
must be $f_0\times \textrm{Id}_{\mathbb{R}}$.  As a consequence,
$\tau_0 = \tau_0^*$ and $f: N\times\mathbb{R}\rightarrow
M\times\mathbb{R}$ is a proper simple homotopy equivalence.

The standard construction shows that elements in
$Wh(\widetilde{W})$ of the form $\rho + \rho^*$ can be realized by
an inertial proper $h$-cobordism.  Consider

\begin{center}
$\widehat{H}^0\left(\Z_2;\widetilde{K}_0(\mathbb{Z}[\pi])\,\right)~~=~~
\left\{\displaystyle\frac{\tau = \tau^*}{\tau + \tau^*}\right\}$~.
\end{center}

\begin{clm}
Realization of elements in
$\widehat{H}^0(\Z_2;\widetilde{K}_0(\mathbb{Z}[\pi]))$ via proper
h-cobordisms starting with $M\times\mathbb{R}$ yields manifolds of
the form $N\times\mathbb{R}$ on the other end.
\end{clm}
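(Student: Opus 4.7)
The plan is to realize the class $[\tau] \in \widehat{H}^0(\Z_2; \widetilde{K}_0(\Z[\pi]))$ by a compact $h$-cobordism over $M \times S^1$ and then pass to the infinite cyclic cover. Choose a representative $\tau \in \widetilde{K}_0(\Z[\pi])$ with $\tau = \tau^*$. The Bass--Heller--Swan decomposition
\[
\mathrm{Wh}(\pi \times \Z) \;\cong\; \mathrm{Wh}(\pi) \,\oplus\, \widetilde{K}_0(\Z[\pi]) \,\oplus\, \widetilde{\mathrm{Nil}}_0(\Z[\pi]) \,\oplus\, \widetilde{\mathrm{Nil}}_0(\Z[\pi])
\]
lifts $\tau$ to a class $\widehat{\tau} \in \mathrm{Wh}(\pi \times \Z)$ supported purely in the $\widetilde{K}_0$ summand. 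Since $\dim(M \times S^1) = 2n \geq 6$, Wall's realization theorem then produces an $h$-cobordism $W$ between $M \times S^1$ and a closed manifold $Y^{2n}$ whose Whitehead torsion equals $\widehat{\tau}$.

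I would next use the $h$-cobordism structure to equip $Y$ with the natural map $g \colon Y \hookrightarrow W \simeq M \times S^1 \twoheadrightarrow S^1$, whose effect on fundamental groups is the projection $\pi \times \Z \to \Z$. The corresponding infinite cyclic cover $\widetilde{Y}$ is properly $h$-cobordant to $M \times \R$ and therefore finitely dominated, so Farrell's fibering theorem applies. Under the Bass--Heller--Swan splitting, the fibering obstruction is identified with the pair of $\widetilde{\mathrm{Nil}}_0$ components of $\widehat{\tau}$; since these vanish by construction, $g$ is homotopic to a fiber bundle projection $Y \to S^1$ with fiber a closed manifold $N$. Passing to the infinite cyclic cover $\widetilde{W} \to W$ then yields a proper $h$-cobordism with one end $M \times \R$ and the other end the infinite cyclic cover of the mapping torus $Y$, which is homeomorphic to $N \times \R$ as a topological space. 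Tracing definitions, the proper Whitehead torsion of $\widetilde{W}$ corresponds exactly to the $\widetilde{K}_0$ component of $\widehat{\tau}$, namely $\tau$; modifying $\tau$ by an element $\rho + \rho^*$ changes $W$ only by an inertial factor that preserves the product form of the other end, so the final output depends only on the class $[\tau]$.

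The main obstacle is the identification used in the previous step: one must match Farrell's fibering obstruction for $g \colon Y \to S^1$ with the pair of $\widetilde{\mathrm{Nil}}_0$ components of the Bass--Heller--Swan decomposition of $\tau(W)$. This requires relating the $h$-cobordism torsion of $W$ to the chain-level obstruction controlling homotopy of $g$ to a fibration, and in particular verifying that Wall's realization contributes nothing to the Nil summands. Once that identification is in place, the remaining verifications reduce to routine tracking of torsions through the Bass--Heller--Swan splitting and the passage to infinite cyclic covers.
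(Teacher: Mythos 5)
Your approach and the paper's diverge substantially.  The paper works entirely in the proper category: having shown in the paragraphs preceding the Claim that the proper homotopy equivalence $f\colon K\to M\times\R$ (where $K$ is the far end of the proper $h$-cobordism) is \emph{simple}, it invokes Taylor's one-sided splitting theorem for proper maps to produce a compact submanifold $N=g^{-1}(M\times\{0\})\subset K$ with $g|_N\colon N\to M$ a homotopy equivalence, splits $K$ along $N$ into $K_0$ and $K_1$, and then applies Siebenmann's open collaring theorem to each piece to conclude $K_0\approx N\times[0,\infty)$, $K_1\approx(-\infty,0]\times N$, hence $K\approx N\times\R$.  No excursion through $M\times S^1$, Bass--Heller--Swan, or Farrell's fibering theorem appears.

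Your route, by contrast, detours through the compact picture and leans on an identification you yourself flag as the ``main obstacle,'' and that flag is warranted: it is a genuine gap, not a routine verification.  The Siebenmann--Farrell fibering obstruction for $g\colon Y\to S^1$ depends on the map $g$ (equivalently, on the simple homotopy type of the band decomposition of $Y$), whereas $\tau(W,M\times S^1)=\widehat\tau$ records a comparison between $Y$ and $M\times S^1$; passing between the two involves a duality correction of the form $\widehat\tau\mapsto \widehat\tau-\overline{\widehat\tau}$ (or the like), and matching Farrell's geometric splitting with the algebraic Bass--Heller--Swan splitting of $\mathrm{Wh}(\pi\times\Z)$ is itself a nontrivial theorem.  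Until that identification is supplied, the fibering of $Y$ over $S^1$ is unproved, and the conclusion $\widetilde Y\cong N\times\R$ does not follow.  There is a secondary issue as well: your argument constructs \emph{one} proper $h$-cobordism realizing $\tau$ with a product on the other end, whereas the paper's formulation quantifies over arbitrary proper $h$-cobordisms realizing $\tau$; bridging this requires a proper $s$-cobordism uniqueness statement (which, granted, follows from Siebenmann's proper $h$-cobordism theory, but should be cited).  The paper's direct argument avoids both issues and uses only the ingredients already established in the proof of Theorem 4.
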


\begin{proof}
To see this, let $(\overline{W}; M\times\mathbb{R}, K)$ be a
proper $h$-cobordism with $\tau_0\in Wh(\overline{W},
M\times\mathbb{R})$,
$\tau_0\in\widehat{H}^0(\Z_2;\widetilde{K}_0(\mathbb{Z}[\pi]))$.  Then
there is a proper homotopy equivalence
$$f: K\hookrightarrow
\overline{W} \rightarrow M\times\mathbb{R}$$
which is simple.  By
the one-sided splitting theorem for proper maps and noncompact
manifolds (\textit{cf.} \cite{Tay}), $f$ is properly homotopic to
a map $g$ with $g^{-1}(M\times\{0\}) = N\subset K$ and $g|_N :
N\rightarrow M\approx M\times\{0\}$ a homotopy equivalence.  We
have a splitting of $g$ into $g|K_0$ and $g|K_1$ where
$g|{K_0} : K_0\rightarrow M\times [0,\infty)$ and
$g|{K_1} : K_1\rightarrow (-\infty, 0]\times M$ are
proper homotopy equivalences.  Now, the Collaring Theorem of
Siebenmann (\textit{cf.} \cite{Sie2}) implies $K_0\approx N\times
[0,\infty)$ and $K_1\approx (-\infty,0]\times N$, and hence
$K\approx N\times\mathbb{R}$.
\end{proof}

\textbf{Remark. }
In order to get more information about the action of
$\widehat{H}^0(\Z_2;\widetilde{K}_0(\mathbb{Z}[\pi]))$ and
manifold classes in $\TT_2^{\mathrm{Top}}(M)\smallsetminus
\TT_1^{\mathrm{Top}}(M)$, one can use the proper surgery
theory of S. Maumary and L. Taylor (see \cite{Maum}, \cite{Maum1},
\cite{Tay} and \cite{PedRan}). Consider the analog of the
Sullivan-Wall long exact sequence for proper surgery theory:

\begin{center}
$\cdots\rightarrow L^{s,\mathrm{open}}_{\ast +1}(M\times\R)
\rightarrow \mathbb{S}^{s,\,{\mathrm{Top}}}(M\times\R) \rightarrow
[M\times\R; G/\mathrm{Top}] \rightarrow
L^{s,\mathrm{open}}_{\ast}(M\times\R) \rightarrow \cdots$
\end{center}

We have $L^{s,\mathrm{open}}_{\ast +1}(M\times\mathbb{R})\cong
L^h_{even}(\pi)\cong L^{p,s}_{even}(\pi)\oplus
\widehat{H}^0(\Z_2;\widetilde{K}_0(\mathbb{Z}[\pi]))$ (\textit{cf.}
\cite{Bak}). In order to describe the set
$\TT_2^{\mathrm{Top}}(M)\smallsetminus
\TT_1^{\mathrm{Top}}(M)$ precisely one must face the well known and
in general difficult problem of deciding which elements in
$\mathbb{S}^{c,\mathrm{Top}}(M\times\R)$ are represented by proper
homotopy self-equivalences.
\end{proof}


\section{Tangential Thickness and Normal Invariants}

Suppose that $M$ and $N$ are closed $n$-manifolds and $k\geq
3$ is such that $n+k\geq 6$.  Surgery theory then yields the
following criteria for $M\times\R^k$ and $N\times\R^k$ to be
homeomorphic:

\begin{prop}
If $M$, $N$ and $k$ are as above, the $M\times\R^k$ is
homeomorphic to $N\times\R^k$ if and only if the compact bounded
manifolds $M\times D^k$ and $N\times D^k$ are $h$-cobordant in the
following sense:  There is a compact manfold with boundary
$X^{n+k+1}$ and a compact manifold with boundary $W^{n+k}\subseteq
\partial X$ such that the following hold:
\begin{itemize}
\item[(i)] $\partial W^{n+k}$ is homeomorphic to a disjoint union
of $M\times S^{k-1}$ and $N\times S^{k-1}$

\item[(ii)] $\partial X^{n+k+1}\cong M\times D^k\cup W\cup N\times
D^k$, where $M\times D^k\cap W = M\times S^{k-1}$ and\\ $N\times
D^k\cap W = N\times S^{k-1}$

\item[(iii)] The inclusion of pairs $(M\times D^k, M\times
S^{k-1})\subseteq (\partial X, W)\subseteq (X, W)$ and\\ $(N\times
D^k, N\times S^{k-1})\subseteq (\partial X, W)\subseteq (X, W)$
are homotopy equivalences of pairs.
\end{itemize}

More precisely, if $M$, $N$ and $k$ are as above and
$f: N\rightarrow M$ is a homotopy equivalence of
manifolds, then $N\times\R^k$ and $M\times\R^k$ are homeomorphic
by a homeomorphism which is homotopic to the composite
\[
\begin{CD}
M\times \R^k @>\mathrm{projection}>> M @>f>>  N
@>\mathrm{zero}>\mathrm{slice}> N\times \R^k\\
\end{CD}
\]
if and only if $M\times D^k$ is topologically $h$-cobordant
to $N\times D^k$ by a map homotopic to
\[
\begin{CD}
M\times D^k @>\textrm{projection}>> M @>f>>  N
@>\textrm{zero}>\textrm{slice}> N\times D^k~.\\
\end{CD}
\]
\end{prop}

{\sl Notation. }  If $(V_0,W,V_1)$ is an $h$-cobordism, we often say
that $V_0$ and $V_1$ are $h$-cobordant by the map $V_0\to W\to V_1$,
where $V_0\to W$ is the inclusion of $V_0$ in $W$, and $W\to V_1$ is a
homotopy inverse to the inclusion of $V_1$ in $W$.

\begin{proof}
This is fairly standard.  If $M\times\R^k$ and $N\times\R^k$ are
homeomorphic, then the homeomorphism maps $M\times D^k$ into some
subset $N\times r\, D^k$, where $r\, D^k$ is the disk of
radius $r$ for some very large value of $r$.  Let $W$ be the
bounded manifold $N\times r\, D^k\smallsetminus\mathrm{Int}(M\times
D^k)$, and take $X$ to be $N\times D^k\times [0,1]$.  The
decomposition of $\partial X$ in $(ii)$ is then given by
identifying $M\times D^k$ with $M\times D^k\times\{0\}$, $W$ with
$W\times\{0\}$ and $N\times D^k$ with $N\times r\,
D^k\times\{1\} \cup N\times\partial(r\, D^k)\times[0,1]$.  It
is then fairly straightforward to check that the inclusions in
$(iii)$ are homotopy equivalences of pairs. Conversely, if we are
given $X$ as in the theorem, then it follows that
$X\smallsetminus\mathrm{Int}(W)$ is a proper $h$-cobordism from
$M\times\mathrm{Int}(D^k)\cong M\times\R^k$ to
$N\times\mathrm{Int}(D^k)\cong N\times\R^k$ in the sense of
\cite{Sie1}. Then by the proper $h$-cobordism theorem of
\cite{Sie1}, it follows that $M\times\R^k$ and $N\times\R^k$ are
homeomorphic because the proper Whitehead group is trivial in
this case (see the theorems on pages 483--484 of \cite{Sie1} and
observe that if $k\geq 3$ then the fundamental group for the end of
$X\times\R^k$ maps isomorphically to the fundamental group of $X$
if $X$ is a finite complex).

Finally, the more precise formulation at the end of the proposition
is an immediate consequence of the construction, for in both
cases the latter is defined by restricting the data in the hypothesis
to certain subsets.
\end{proof}

\begin{comp}
Similar results are true in the categories of piecewise linear
{\rm (PL)} or smooth manifolds if we stipulate that all manifolds
lie in the given category and the homeomorphisms are
{\rm PL}-homeomorphisms or diffeomorphisms, respectively.
\end{comp}

This is true because one has analogs of the proper $h$-cobordism
theorem in the PL and smooth categories (in fact, they predate the
topological version).  In the smooth category there are some
issues about rounding corners in a product of two bounded smooth
manifolds, but there are standard ways of addressing such points.
({\it e.g.}, see Section I.3 of \cite{Co}, the appendix to \cite{BoSe}
or \cite{joyce}).

These results lead to the use of surgery theoretic structure sets;
the latter are defined for closed manifolds in \cite{Ran2} and one
can treat the bounded case using maps and homotopy equivalences of
pairs as in Chapter 10 of Wall's book \cite{Wall2}.  In order to
translate Proposition 3.1 and Complement 3.1 into the language of
structure sets, we need to work with certain function spaces.
Following James \cite{James}, we shall denote the identity component
of the continuous function space $\mathscr{F}(S^{k-1},S^{k-1})$ by
$SG_k$, and $SF_{k-1}$ will denote the subspace of basepoint
preserving maps (which is also arcwise connected). By the results
of \cite{James} and \cite{Stasheff}, there is a
Serre fibration $SF_{k-1}\rightarrow SG_k\rightarrow S^{k-1}$ and
a corresponding classifying space fibration $S^{k-1}\rightarrow
BSF_{k-1}\rightarrow BSG_k$.  The space of degree zero basepoint
preserving self-maps is homeomorphic to the component
$\Omega^{k-1}_0 S^{k-1}$ of the constant map iterated loop space
$\Omega^{k-1} S^{k-1}$, and the map $w: \Omega^{k-1} S^{k-1}
\rightarrow SF_{k-1}$ sending $f: S^{k-1}\rightarrow S^{k-1}$ to
the composite
\[
\begin{CD}
S^{k-1} @> \mathrm{pinch} >> S^{k-1}\vee S^{k-1} @> f\vee\mathrm{Id} >>
S^{k-1}\vee S^{k-1} @> \mathrm{fold} >> S^{k-1}\\
\end{CD}
\]

\noindent
is a homotopy equivalence.  It is important to recognize that this
homotopy equivalence \textbf{does not send} the loop sum on $\Omega^{k-1}
S^{k-1}$ to the composition product in $SF_{k-1}$ (the precise relationship
is described at the beginning of Section 6).  The unreduced
suspension functor defines
continuous homomorphisms $SG_k\rightarrow SF_{k+1}$, and if
$\Omega^{k-1} S^{k-1}\rightarrow \Omega^k S^k$ is the suspension
map induced by the suspension adjoint $\sigma : S^{k-1}\rightarrow
\Omega S^k$, then we have the following homotopy commutative
diagram:

\[
\begin{CD}
\Omega^{k-1}_0 S^{k-1} @>=>> \Omega^{k-1}_0 S^{k-1}
@> \Omega^{k-1}\sigma >> \Omega^k_0 S^k\\
@ V{w_{k-1}}VV @VVV @VV{w_k}V\\
SF_{k-1} @>>> SG_k @>>> SF_k\\
\end{CD}
\]
{}

\centerline{{\bf (~Diagram 3.0~)}}

The preceding chain of maps can be extended by adjoining
$SG_{k+1}$ on the right, and if we take limits as
$k\to\infty$ via suspensions we obtain the homotopically
equivalent
topological monoids $SG$ and $SF$.
With this preparation, we can restate Proposition 3.1 and Complement
3.1 in the piecewise linear and topological categories as follows:

\begin{prop}
Let $M$ and $N$ be closed connected \textrm{PL} (resp., topological)
manifolds with dim $M$ = dim $N\geq 5$, let $k\geq 3$, and let
$f: M\rightarrow N$ be a homotopy equivalence.  Then $M\times
D^k$ is piecewise linearly (resp., topologically) $h$-cobordant
to $N\times D^k$ by the canonical homotopy equivalence as above
(inclusion of one boundary pieces followed by retraction onto
the other boundary piece), which is homotopic to
\[
\begin{CD}
M\times D^k @>\textrm{projection}>> M @>f>>  N
@>\textrm{zero}>\textrm{slice}> N\times D^k\\
\end{CD}
\]
if and only if the normal invariant in $[N,G/\mathrm{PL}]$ (resp.,
$[N,G/\mathrm{Top}]$) lies in the image of $[N,SG_k]$ under the map
induced by the composite $G_k\rightarrow G\rightarrow
G/\mathrm{PL}$ (resp., $SG_k\rightarrow SG\rightarrow
SG/\mathrm{Top}$).
\end{prop}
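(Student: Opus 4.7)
The plan is to combine Proposition~3.1, which translates the product question into an $h$-cobordism problem for the bounded manifolds $M \times D^k$ and $N \times D^k$, with surgery theory for manifolds with boundary, where the ``middle piece'' $W$ of Proposition~3.1 accounts for precisely the freedom captured by $[N, SG_k]$. First I would identify the normal invariant of the pair map $f \times \mathrm{Id}_{D^k}\colon (M \times D^k,\, M \times S^{k-1}) \to (N \times D^k,\, N \times S^{k-1})$ as the pullback of the normal invariant of $f$ in $[N, G/\mathrm{Top}]$ along the projection $N \times D^k \to N$; via the homotopy equivalence $N \times D^k \simeq N$ this is just the normal invariant of $f$ itself. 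The question then becomes whether this class can be realized by a cobordism of the sort allowed by Proposition~3.1, in which $W$ is permitted to vary over manifolds cobounding $M \times S^{k-1}$ and $N \times S^{k-1}$.

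The key step is to show that the freedom in choosing $W$ corresponds precisely to the image of $[N, SG_k] \to [N, G/\mathrm{Top}]$ under the stated composite. Since $\dim W = n+k-1 \geq 5$, the $h$-cobordism theorem in the appropriate category lets us describe each admissible $W$, up to the relevant equivalence, as a self-fiber-homotopy-equivalence of the trivial $(k-1)$-sphere bundle $N \times S^{k-1} \to N$, classified by a homotopy class in $[N, SG_k]$. Replacing the trivial piece by the one determined by such a class modifies the normal invariant of the resulting pair map by the image of that class under the composite $SG_k \to SG \to G/\mathrm{Top}$ appearing in Diagram~3.0. For the ``if'' direction, a lift $\phi\colon N \to SG_k$ of the normal invariant of $f$ provides the data to build $W$, to which $M \times D^k$ and $N \times D^k$ are glued along their sphere boundaries; one then extends this $\partial X$ to a compact $X$ with the required homotopy equivalence properties by a standard surgery argument, the potential $L$-group obstruction vanishing because $\phi$ already realizes the normal data at the manifold level. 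For the ``only if'' direction, one reads off a class $\phi \in [N, SG_k]$ from a given $X$ by applying the $h$-cobordism theorem to $W$ and verifies that the composite sends $\phi$ to the normal invariant of $f$.

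The main obstacle is the careful geometric identification in the key step, namely matching the $W$-freedom in Proposition~3.1 with the precise subgroup $\mathrm{Im}\bigl([N, SG_k] \to [N, G/\mathrm{Top}]\bigr)$, and verifying that no further $L$-theoretic obstruction arises beyond this normal-invariant-level condition. Both points follow from standard surgery-theoretic principles together with Diagram~3.0, but the bookkeeping, particularly distinguishing genuine rank-$k$ unstable self-equivalences from their stabilizations and tracking the Whitehead torsion of $W$ as an $h$-cobordism, requires some care.
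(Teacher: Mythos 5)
The paper's proof proceeds differently from your plan and hinges on machinery you do not invoke: the Browder–Casson–Haefliger–Sullivan–Wall embedding theorem combined with block bundle theory. Starting from the homotopy structure $f\times\mathrm{Id}_{D^k}$ on $N\times D^k$, the paper uses the BCHSW theorem to produce a $k$-dimensional block bundle $\xi$ over $N$ and a PL-homeomorphism $E(\xi)\cong M\times D^k$, yielding a class $\alpha\in[N,G_k/\widetilde{\mathrm{PL}}_k]$ whose stabilization in $[N,G/\mathrm{PL}]$ is $\eta(f)$ and whose image in $[N,B\widetilde{\mathrm{PL}}_k]$ classifies $\xi$. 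The equivalence is then immediate from block bundle theory: $\xi$ is trivial iff $E(\xi)\cong N\times D^k$ iff the desired $h$-cobordism exists, and the triviality of $\xi$ is exactly the condition that $\alpha$ lifts to $[N,G_k]$. The topological case replaces block bundles by topological regular neighborhoods, using the fact from \cite{RS4} that $G_k/\widetilde{\mathrm{Top}}_k\to G/\mathrm{Top}$ is a homotopy equivalence for $k\geq 3$.

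Your plan has a concrete gap in the ``key step.'' You assert that the $h$-cobordism theorem lets you describe each admissible $W$ as a self-fiber-homotopy-equivalence of the trivial sphere bundle $N\times S^{k-1}\to N$, classified by $[N,SG_k]$. This does not follow: $W$ is an $h$-cobordism between $M\times S^{k-1}$ and $N\times S^{k-1}$, and even granting vanishing torsion (which you flag but do not resolve), the $h$-cobordism theorem only gives $W\cong M\times S^{k-1}\times[0,1]$ as a manifold; it does not by itself produce a class in $[N,SG_k]$, nor does it identify the effect on normal invariants without substantially more input. The missing input is precisely what the BCHSW embedding theorem provides: a canonical way to realize $M\times D^k$ as the total space of a $k$-block bundle over $N$, which is what relates the geometry to the unstable group $[N,G_k/\widetilde{\mathrm{PL}}_k]$ and hence to $[N,SG_k]$. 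Without this (or some equivalent) your ``if'' direction, which hand-waves past the construction of $X$ and the vanishing of the $L$-group obstruction, and your ``only if'' direction, which must extract a class in $[N,SG_k]$ from a given $X$, both remain essentially unestablished.
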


If the homotopy equivalences in the preceding two propositions are
simple homotopy equivalences, then by the $s$-cobordism theorems
in the respective categories (see \cite{KirSie}, p. 4)
one has stronger conclusions:

\begin{itemize}
\item[(i)]  If $M$, $N$ and $k$ are as above and lie in the category
of smooth, piecewise linear or topological manifolds and the
homotopy equivalence from $M$ to $N$ is simple, then $M\times\R^k$ is
(respectively) diffeomorphic, piecewise linearly homeomorphic, or
homeomorphic to $N\times\R^k$ if and only if the compact bounded
manifolds $M\times D^k$ and $N\times D^k$ are diffeomorphic,
piecewise linearly homeomorphic or homeomorphic respectively.

\item[(ii)] If everything lies in either the piecewise linear or
topological category and the homotopy equivalence from $M$ to
$N$ is simple, then $M\times D^k$ is piecewise linearly
homeomorphic (resp., homeomorphic)
to $N\times D^k$ by a map as above if and only if the normal
invariant lifts as in Proposition $3.2$.
\end{itemize}

There is an analog of Proposition 3.2 in the smooth category, but the
proof is longer and and omitted because we do not need the smooth
version of this result.

\begin{proof}
We begin with the case of the PL category since the argument is
simpler but also contains the ideas to be employed in the
topological category.  Given a homotopy equivalence $f:
M\rightarrow N$, we want to consider the homotopy structure on
$N\times D^k$ given by the product map $f\times\mathrm{Id}_{D^k}$.
Standard properties of normal invariants imply that
$\eta(f\times\mathrm{Id}_{D^k})=p^*\eta(f)$, where $\eta(-)$
denotes the normal invariant and $p^*:
[N,G/\mathrm{PL}]\rightarrow [N\times D^k, G/\mathrm{PL}]$ is
induced by the coordinate map $p: N\times D^k\rightarrow N$;
the map $p^*$ is an isomorphism because $D^k$ is contractible.
Since $k\geq 3$, it follows that the maps $\pi_i(N\times S^{k-1})
\rightarrow \pi_i(N\times D^k)$ are isomorphisms for $i=0$ or 1.
Therefore the $\pi-\pi$ theorem of \cite{Wall2} implies that the normal
invariant map $\mathbb{S}(N\times D^k)\rightarrow [N\times
D^k,G/\mathrm{PL}]$ and the corresponding map for simple structures
$\mathbb{S}^s(N\times D^k)\rightarrow [N\times
D^k,G/\mathrm{PL}]$  are both 1--1 and onto.  In fact, the inverse to
the forgetful map from $\mathbb{S}^s(N\times D^k)$ to
$\mathbb{S}(N\times D^k)$ is given geometrically as follows:  Given a
homotopy structure $(W,\partial W)\to (N\times D^k,N\times S^{k-1})$
with Whitehead torsion $\alpha$, take the simple homotopy structure
obtained by attaching an $h$-cobordism with Whitehead torsion $-\alpha$
along $\partial W$, and let $(W',\partial W')$ be the result of this
construction; standard Whitehead torsion formulas then imply that the
associated homotopy equivalence
$(W',\partial W')\to (N\times D^k,N\times S^{k-1})$ is simple.

The proof for the embedding theorem of
Browder, Casson, Haefliger, Sullivan and Wall (see \cite{Rourke}, (8.10),
p. 161), implies that there is a piecewise linear homeomorphism from
$W'$ to the total space of some $k$-dimensional block bundle over $N$
which we shall call $\xi$ (see \cite{RS1}, \cite{RS2},
\cite{RS3} for background on block bundles).  If we denote the
total space of this block bundle by $E(\xi)$, then by the
$h$-cobordism and $s$-cobordism theorems we can retrieve $W$ by
attaching an $h$-cobordism with Whitehead torsion $\alpha$ along
$\partial E(\xi)\cong \partial W'$.  In particular, if the
homotopy structure on $N\times D^k$ is given by crossing a homotopy
equivalence $f:M\to N$ with the identity on $D^k$, we obtain a block
bundle $\xi$ on $N$ and a piecewise linear embedding $F:E(\xi)\to
M\times\mathrm{Int}\,(D^k)$ such that the complement of the image of
$F$ is an $h$-cobordism and the composite
\[
\begin{CD}
N@>z>> E(\xi) @>F>> M\times D^k @>\textrm{projection}>> M @>f>>  N\\
\end{CD}
\]
is homotopic to the identity, where $z$ denotes the zero section
inclusion for a block bundle.

The data in the preceding paragraph correspond
to a unique class $\alpha\in [N,G_k/\widetilde{\mathrm{PL}}_k]$
with the following properties:

\begin{itemize}
\item[(i)] The image of $\alpha$ in $[N,G/\mathrm{PL}]$ under a
canonical stabilization map $G_k/\widetilde{\mathrm{PL}}_k
\rightarrow G/\mathrm{PL}$ (which is a homotopy equivalence) is
the normal invariant $\eta(f)$.

\item[(ii)] The image of $\alpha$ in
$[N,\mathrm{B}\widetilde{\mathrm{PL}}_k]$ under a canonical map
$G_k/\widetilde{\mathrm{PL}}_k \rightarrow
\mathrm{B}\widetilde{\mathrm{PL}}_k$ classifies the block bundle
$\xi$.
\end{itemize}

Basic results on block bundles imply that a block bundle $\xi$
over a manifold $N$ is trivial
if and only if its total space $E(\xi)$ is PL-homeomorphic to
$N\times D^n$ such that an appropriate diagram commutes
(see Section 4 of \cite{RS1}).
Furthermore, by the $s$-cobordism Theorem it follows that
$M\times D^k$ is piecewise linearly $h$-cobordant to
$N\times D^k$ by a map homotopic to
\[
\begin{CD}
M\times D^k @>\textrm{projection}>> M @>f>>  N
@>\textrm{zero}>\textrm{slice}> N\times D^k\\
\end{CD}
\]
if and only if the image of $\alpha$ in
$[N,\mathrm{B}\widetilde{\mathrm{PL}}_k]$ is trivial.  The latter
is true if and only if $\alpha$ lies in the image of the map
$[N,G_k]\rightarrow [N,G_k/\widetilde{\mathrm{PL}}_k]$, and hence
the result follows in the piecewise linear category.  The proof in
the topological category is similar, but one must replace the theory
of piecewise linear block bundles with a corresponding theory of
topological regular neighborhoods
as in \cite{RS4} and \cite{Ed}.  One crucial step in the PL
proof uses the
fact that the stabilization map $G_k/\widetilde{\mathrm{PL}}_k
\rightarrow G/\mathrm{PL}$ is a homotopy equivalence if
$k\geq 3$.  The corresponding fact for the map
$G_k/\widetilde{\mathrm{Top}}_k \rightarrow G/\mathrm{Top}$ is
contained in \cite{RS4}.
\end{proof}

\textbf{Remark.} Since the main objects of interest in this paper
are odd-dimensional $\Z[\frac{1}{2}]$-homology spheres and topological
equivalence coincides with piecewise linear equivalence for such
manifolds by \cite{KirSie}, all we really need in this paper is the
piecewise linear case of the preceding result.

If $X$ is a connected finite complex, then Diagram 3.0
yields an
isomorphism of sets from the stable cohomotopy group $\{X,S^0\}$
to $[X,SG]$.  Under this isomorphism, the image of the map
$[X,SG_k]\rightarrow [X,SG]$ is trapped between the images of
the iterated suspension homomorphisms $[S^{k-1}X,S^{k-1}]\rightarrow
\{X,S^0\}$ and $[S^kX,S^k]\rightarrow \{X,S^0\}$.  The results of
\cite{James} show that the image of $[X,SG_k]\rightarrow [X,SG]$
corresponds
to the image of $[S^kX,S^k]\rightarrow \{X,S^0\}$ if
$\dim X\leq 2k-2$.  We shall also need the following criteria for
determining whether a class in $[X,SG]$ lifts back to $[X,SG_k]$:

\begin{prop}\label{prop33}
Let $X$ be a connected finite complex and let $\alpha\in [X,SG]$
be a class such that $\alpha$ lifts to $[X,SG_3]$; take the group
structures on these spaces induced by the composition products on
the function spaces $\mathscr{F}(S^3,S^3)$ and
$\displaystyle\lim_{m\rightarrow\infty} \mathscr{F}(S^m,S^m)$.
Then $\alpha = \alpha_1+\alpha_2$ where $\alpha_2$ lies in the
image of $[X,SO]\rightarrow [X,SG]$ (where $SO$ is the group
$\displaystyle\lim_{m\rightarrow\infty} SO_m$) and $\alpha_1$
corresponds to an element in the image of $[S^2X,S^2]\rightarrow
\{X,S^0\}$.
\end{prop}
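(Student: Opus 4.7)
The plan is to compare two evaluation fibrations with common base $S^{2}$: the fibration $SF_{2} \to SG_{3} \xrightarrow{e} S^{2}$ appearing in the setup before Diagram 3.0 (whose fiber is $SF_{2} \simeq \Omega^{2}_{0}S^{2}$), and the sub-fibration $SO_{2} \to SO_{3} \xrightarrow{e} S^{2}$ coming from rotations acting on $S^{2}$. The inclusion $SO_{3} \hookrightarrow SG_{3}$ is compatible with the evaluation maps, and this is the starting diagram.

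Given $\alpha \in [X, SG_{3}]$, the first step is to form $\bar{\alpha} = e_{*}\alpha \in [X, S^{2}]$. The fiber piece is easy: any class in $[X, SG_{3}]$ whose evaluation is null lies in the image of $[X, SF_{2}] \to [X, SG_{3}]$, and under $[X, SF_{2}] \cong [X, \Omega^{2}_{0}S^{2}] \cong [S^{2}X, S^{2}]_{0}$ its image in $\{X, S^{0}\}$ falls into the image of $[S^{2}X, S^{2}] \to \{X, S^{0}\}$ as needed for $\alpha_{1}$.

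The second and critical step is to realize the contribution of $\bar{\alpha}$ to $[X, SG]$ as the sum of a class in the image of $[X, SO] \to [X, SG]$ and a further desuspendable class. I would try to do this by lifting $\bar{\alpha}$ through the $SO_{3} \to S^{2}$ fibration as far as possible: although the primary obstruction to a direct lift is the Chern-type class $2\bar{\alpha}^{*}[S^{2}] \in H^{2}(X; \mathbb{Z})$, the composite $S^{2} \xrightarrow{} SG_{3} \xrightarrow{} SG$ represents the generator $\eta^{2}$ of $\pi_{2}(SG) = \pi_{2}^{s} = \mathbb{Z}/2$, and $\eta^{2}$ already desuspends into $\pi_{4}(S^{2})$, hence into the image of $[S^{2}X, S^{2}] \to \{X, S^{0}\}$. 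Thus the part of $\bar{\alpha}$ not realized through $SO$ can be absorbed into the suspendable piece via the $J$-homomorphism on the one hand and $\eta^{2}$-desuspension on the other.

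The third step is mechanical: with $\alpha_{2} \in [X, SO]$ chosen as above and $\alpha_{1}$ the difference between $\alpha$ and the image of $\alpha_{2}$, the construction ensures $\alpha - \alpha_{2}$ has trivial evaluation modulo a desuspendable class, so it lies in the image of $[X, SF_{2}] \to [X, SG]$, completing the decomposition $\alpha = \alpha_{1} + \alpha_{2}$ in $\{X, S^{0}\}$ (the composition and loop-sum group structures coincide on this abelian group). The main obstacle is precisely Step 2, because the fibers of $SO_{3} \to S^{2}$ and $SG_{3} \to S^{2}$ are genuinely different ($S^{1}$ versus $\Omega^{2}_{0}S^{2}$), and the inclusion is not a fiber equivalence; one must carefully control, after stabilization into $SG$, how much of the discrepancy between these two fibrations is captured by the $J$-homomorphism and how much by the EHP/Hopf-invariant machinery that governs desuspension from $[S^{3}X, S^{3}]$ down to $[S^{2}X, S^{2}]$.
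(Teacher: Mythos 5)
Your setup is promising (the two evaluation fibrations over $S^{2}$ are indeed the starting point of the paper's proof), and your Step 1 is fine: classes in $[X,SG_3]$ with null evaluation come from $[X,SF_2]\cong[X,\Omega^2_0 S^2]$, which gives the desuspendable part. But Step 2, which you yourself flag as the critical one, has a genuine gap that the rest of the argument does not recover from.

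The core problem is that "the composite $S^2\to SG_3\to SG$" is undefined: the evaluation fibration $SG_3\to S^2$ has no section (if it did, $\pi_2(SG_3)$ would surject onto $\pi_2(S^2)=\Z$, which is false since $\pi_2(SG_3)\to\pi_2(SG)=\Z/2$ is an isomorphism by James's theorem that $SG_3\to SG$ is 2-connected). So the appeal to $\eta^2\in\pi_2(SG)$ and its desuspension to $\pi_4(S^2)$ does not attach to anything; the residual obstruction to lifting $\bar\alpha$ through $SO_3\to S^2$ lives in $H^2(X;\Z)$, not in $\{X,S^0\}$, and there is no machinery in the proposal that converts the former into the latter. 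Moreover, even granting a lift $\beta\in[X,SO_3]$, the "subtraction" $\alpha-\alpha_2$ in Step 3 is not justified: the evaluation $e:SG_3\to S^2$ is not an $H$-map ($S^2$ is not an $H$-space), so $e_*(\tilde\alpha\cdot(j_*\beta)^{-1})$ need not be trivial, and you cannot conclude the difference comes from $[X,SF_2]$ just because $e_*\tilde\alpha=e_*(j_*\beta)$.

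The paper sidesteps both difficulties by passing to the homogeneous quotients rather than attempting explicit lifts: from the map of fibrations it deduces $SF_2/SO_2\simeq SG_3/SO_3$, then uses 2-connectivity of $SG_3\to SG$ and $SO_3\to SO$ to get that $SG_3/SO_3\to G/O$ is 2-connected, hence $SF_2/SO_2$ is simply connected, and finally uses asphericity of $SO_2$ to see $\widetilde{SF_2}\to SF_2/SO_2$ is an equivalence. This shows the images of $[X,SF_2]$ and $[X,SG_3]$ in $[X,G/O]$ coincide, which is exactly the statement ``kill the $SO$ part and you land in the $SF_2$ part,'' without ever choosing lifts or forming differences. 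If you want to rescue an approach closer to your Step 2, you would need to first establish that the primary obstruction to lifting $\bar\alpha$ through $SO_3\to S^2$ vanishes (because the corresponding obstruction for $SG_3\to S^2$ vanishes, and $\pi_1(SO_2)\to\pi_1(SF_2)$ is an isomorphism), and then give a separate argument for the subtraction step that does not rely on $e$ being additive; in effect you would be re-deriving the facts about $SF_2/SO_2$ that the paper uses directly.
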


\begin{proof}
It will suffice to show that the images of $[X,SG_3]$ and
$[X,SF_2]$ in $[X,G/O]$ are equal, for this implies that the image
of $[X,SG_3]$ in $[X,SG]$ is generated by $[X,SF_2]$ and $[X,SO]$,
and it follows from Diagram 3.0 that
the image of $[X,SF_2]$ in $[X,SG]$ corresponds to
the image of $[S^2X,S^2]$ in $\{X,S^0\}$.

We begin with the following commutative diagram whose rows are
given by fibrations:
\[
\begin{CD}
SO_2 @>>> SO_3 @>>> S^2 @>>> BSO_2 @>>> BSO_3\\
@VVV @VVV @| @VVV @VVV\\
SF_2 @>>> SG_3 @>>> S^2 @>>> BSF_2 @>>> BSG_3\\
\end{CD}
\]
It follows that the fibers of $BSO_2\rightarrow BSF_2$ and
$BSO_3\rightarrow BSG_3$, which are $SF_2/SO_2$ and $SG_3/SO_3$,
are homotopy equivalent.  Since the map $SO_3\rightarrow SO$ is
well known to be 2-connected and $SG_3\rightarrow SG$ is also
2-connected by \cite{James}, it follows that
$SG_3/SO_3\rightarrow G/O$ is
2-connected.  Therefore, $\pi_1(SG_3/SO_3)\cong \pi_1(G/O) = 0$,
so that $SF_2/SO_2$ is also simply connected.  Furthermore, since
$SO_2$ is aspherical it follows that the composite of the
universal covering space projection $\widetilde{SF}_2\rightarrow
SF_2$ and the canonical map $SF_2\rightarrow SF_2/SO_2$ is a
homotopy equivalence.  Thus we have shown that the images of
$[X,SG_3]$ and $[X,SF_2]$ in $[X,G/O]$ are equal.  Finally, since
the image of $[X,SF_2]$ lies between the images of
$[X,\widetilde{SF}_2]$ and $[X,G_3]$, it follows that the images
of all three of these groups in $[X,G/O]$ must coincide.
\end{proof}

\begin{prop}
Let $p$ be an odd prime, let $k\geq 2$, and let $\alpha\in [X,SG]$ be
an element of order $p^r$ for some $r>0$.  Then $\alpha$ lies in
the image of $[X,SG_{2k}]\rightarrow [X,SG]$ if and only if
$\alpha$ corresponds to an element in the image of
$[S^{2k-1}X,S^{2k-1}]\rightarrow \{X,S^0\}$.
\end{prop}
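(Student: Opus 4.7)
The plan is as follows. The ``if'' direction is immediate: by the James equivalence $w_{2k-1}$ of Diagram 3.0, the inclusion $SF_{2k-1}\subseteq SG_{2k}$ of the fiber of the evaluation fibration $SG_{2k}\to S^{2k-1}$ identifies $[S^{2k-1}X,S^{2k-1}]$ with $[X,SF_{2k-1}]$ sitting inside $[X,SG_{2k}]$, whose image in $\{X,S^0\}=[X,SG]$ therefore contains the image of $[S^{2k-1}X,S^{2k-1}]\to\{X,S^0\}$.

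For the converse I work $p$-locally throughout; since $\alpha$ has $p$-power order no information is lost. The plan is to show that $p$-locally the images in $[X,SG]$ of $[X,SG_{2k}]$ and of $[X,SF_{2k-1}]$ coincide. One has the chain of containments
$$\mathrm{Im}\bigl([X,SF_{2k-1}]\to[X,SG]\bigr)_{(p)} \subseteq \mathrm{Im}\bigl([X,SG_{2k}]\to[X,SG]\bigr)_{(p)} \subseteq \mathrm{Im}\bigl([X,SF_{2k}]\to[X,SG]\bigr)_{(p)},$$
the first from the ``if'' direction and the second because Diagram 3.0 factors $SG_{2k}\to SG$ through the suspension $SG_{2k}\to SF_{2k}$. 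Invoking the James--Serre splitting at the odd prime $p$,
$$\Omega S^{2k}_{(p)}\;\simeq\;S^{2k-1}_{(p)}\times\Omega S^{4k-1}_{(p)},$$
and applying $\Omega^{2k-1}$ to identity components, one obtains a $p$-local equivalence
$$SF_{2k,(p)}\;\simeq\;SF_{2k-1,(p)}\times\Omega^{2k}_0 S^{4k-1}_{(p)}$$
in which the inclusion of the first factor is precisely the suspension $SF_{2k-1}\to SF_{2k}$. Hence it suffices to show that the composite $\Omega^{2k}_0 S^{4k-1}_{(p)}\to SF_{2k,(p)}\to SG_{(p)}$ is null-homotopic, which forces the outer two containments above to coincide.

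To verify this null-homotopy, it is enough to show that the unlooped composite $\Omega S^{4k-1}_{(p)}\to\Omega S^{2k}_{(p)}\to QS^{2k-1}_{(p)}$ is null (with the second map being the stabilization unit), since $\Omega^{2k-1}$ of a null map is null. The James--Serre section is the James multiplicative extension of the adjoint of the Whitehead square $[\iota_{2k},\iota_{2k}]\in\pi_{4k-1}(S^{2k})$, which is stably null (because $S^{2k}\times S^{2k}$ splits stably as $S^{2k}\vee S^{2k}\vee S^{4k}$). Under the Snaith splitting $\Sigma^\infty\Omega S^{2k}\simeq\bigvee_{m\geq 1}\Sigma^\infty S^{m(2k-1)}$, the image of the James--Serre section lies in the summands of even Snaith filtration, while stabilization $\Omega S^{2k}\to QS^{2k-1}$ is the projection onto the $m=1$ summand, so the stable composite vanishes. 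The principal obstacle I anticipate is careful bookkeeping regarding the two distinct H-space structures in play---the loop-sum on iterated loop spaces versus the composition product on the function spaces $\mathscr{F}(S^m,S^m)$ (the paper already notes that $w$ does not intertwine these)---so one must verify that the James--Serre splitting and the stabilization map respect the composition product that gives $[X,SG]$ the group structure used in the statement.
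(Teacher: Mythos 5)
Your overall strategy is sound and genuinely different from the paper's, but there is a gap in the key step, and the paper's route is considerably more economical.

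\textbf{Comparison of approaches.} The paper does not pass through $SF_{2k}$ at all. It uses the evaluation fibration $SF_{2k-1}\to SG_{2k}\to S^{2k-1}$ and observes that the clutching function of $\tau(S^{2k})$ gives a map $S^{2k-1}\to SO_{2k}\subset SG_{2k}$ whose composite with the evaluation has degree $\pm 2$; this is a $p$-local section, so multiplication yields $SG_{2k,(p)}\simeq SF_{2k-1,(p)}\times S^{2k-1}_{(p)}$, and the $S^{2k-1}$ factor dies in $SG$ simply because $\tau(S^{2k})$ is stably trivial (the composite $S^{2k-1}\to SO_{2k}\to SO$ is null). The complementary summand is a finite complex and the nullity is a one-line bundle-theory fact. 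Your route instead traps $\mathrm{Im}\,[X,SG_{2k}]$ between $\mathrm{Im}\,[X,SF_{2k-1}]$ and $\mathrm{Im}\,[X,SF_{2k}]$ and then splits $SF_{2k,(p)}$ via James--Serre, which leaves you with the infinite-dimensional complementary factor $\Omega^{2k}_0 S^{4k-1}_{(p)}$. This is workable but shifts the burden to a much harder nullity statement. It is interesting that both proofs ultimately rest on a ``stably trivial'' fact --- yours on the stable triviality of the Whitehead square, the paper's on the stable triviality of $\tau(S^{2k})$ --- but the latter is the cheaper input.

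\textbf{The gap.} The assertion that ``the image of the James--Serre section lies in the summands of even Snaith filtration'' is not justified by what you say. The James--Serre section $\lambda$ satisfies $\lambda(J_n(S^{4k-2}))\subset J_{2n}(S^{2k-1})$, and this does constrain the stable components $\lambda_{n,m}:\Sigma^\infty S^{n(4k-2)}\to\Sigma^\infty S^{m(2k-1)}$ to vanish for $m>2n$; it does not, however, force vanishing for small $m$. Homologically the image lies in even weights (since $\lambda_*(y^n)=c^nx^{2n}$), but that only shows $\lambda_{n,m}$ is trivial in homology, and a priori the components $\lambda_{n,1}\in\pi^s_{(2n-1)(2k-1)}$ for $n\geq 2$ could be nonzero $p$-locally. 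The correct way to finish this line of argument is not via Snaith bookkeeping but via the universal property of the James construction: the composite $\Omega S^{4k-1}\to\Omega S^{2k}\to QS^{2k-1}$ is a composite of H-maps (indeed, monoid maps after passing to Moore loops) out of the free monoid $J(S^{4k-2})$, so it is determined up to homotopy by its restriction to the bottom cell $S^{4k-2}$, and that restriction is the stabilization of the adjoint Whitehead square, which is zero. With that fix your argument goes through, including the resolution of the H-structure concern you flag: the splitting of $SF_{2k,(p)}$ and the stabilization $SF_{2k}\to SG$ are both loop-sum H-maps under the $w$-identifications, and since the proposition's conclusion is about set-theoretic images, working with the loop sum throughout is legitimate. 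The paper avoids all of this by splitting inside $SG_{2k}$ with the composition product from the start.
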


\begin{proof}
We shall work with $p$-localization in the sense of Sullivan
\cite{Sul2}. Since connected $H$-spaces and simply connected spaces
all have good localizations at
$p$, it is meaningful to discuss the localized spaces
$$SG_{(p)},\quad SG_{2k(p)}, \quad SF_{2k-1(p)},
\quad S^{2k-1}_{(p)}, \quad SO_{2k(p)}, \quad
\Omega^{\infty}_0 S^{\infty}_{(p)} \quad \mathrm{and} \quad
\Omega^{2k-1}_0 S^{2k-1}_{(p)}$$
where $\Omega^m_0 Y$ denotes the path component
of the constant map in the iterated loop space $\Omega^m Y$.  Note
that if $W$ is an arcwise connected $H$-space whose homotopy groups
are all finite, then $W$ is naturally homotopy equivalent to the
weak product of its localizations $W_{(q)}$ at all primes $q$; in
particular, this applies to the $H$-spaces $SG\simeq
\Omega^{\infty}_0 S^{\infty}$ and $SF_{2k-1}\simeq \Omega^{2k-1}_0
S^{2k-1}$.

Recall that we have a fibration $SO_{2k-1}\rightarrow
SO_{2k}\rightarrow S^{2k-1}$ and that the tangent bundle
$T(S^{2k})$ is classified by a map $S^{2k-1}\rightarrow SO_{2k}$
such that the composite $S^{2k-1}\rightarrow SO_{2k}\rightarrow
S^{2k-1}$ has degree 2.  If we compose the map
$S^{2k-1}\rightarrow SO_{2k}$ with the inclusion of $SO_{2k}$ in
$SG_{2k}$ and the fibration $SG_{2k}\rightarrow S^{2k-1}$, the
resulting composite also have degree 2.  Therefore the map
\[
\begin{CD}
SF_{2k-1}\times S^{2k-1} @>>> SG_{2k}\times SG_{2k} @>
\mathrm{mult.} >> SG_{2k}
\end{CD}
\]
becomes a homotopy equivalence when localized at the odd prime
$p$.  Since the composite
$$S^{2k-1}\rightarrow SO_{2k}\rightarrow SO$$
is nullhomotopic, it follows that the image of $[X,SG_{2k(p)}]$
in $[X,SG_{(p)}]\cong [X,SG]_{(p)}$ is equal to the image of
$[X,SF_{2k-1(p)}]$, which corresponds to the image of
$[S^{2k-1}X,S^{2k-1}_{(p)}]\cong [S^{2k-1}X,S^{2k-1}]_{(p)}$ in
$\{X,S^0\}_{(p)}$; note that the codomain is the Sylow
$p$-subgroup of $\{X,S^0\}$ with respect to the loop sum, and
likewise the domain is the Sylow $p$-subgroup of the finite group
$[S^{2k-1}X,S^{2k-1}]$.  These observations imply that if
$\alpha\in[X,SG]$ is $p$-primary with respect to the composition
product (which is homotopy abelian) and lifts to $[X,SG_{2k}]$,
then $\alpha$ corresponds to an element of $\{X,S^0\}$ which
desuspends to $[S^{2k-1}X,S^{2k-1}]$.
\end{proof}

We shall also need the following result:

\begin{prop}\label{prop35}
If $\alpha\in [X,SG]$ has odd order and lies in the image of
$[X,SG_3]$, then the image of $\alpha$ in $[X,G/O]$ is trivial.
\end{prop}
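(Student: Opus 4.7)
My plan is to combine Proposition~3.3 with a direct computation showing that $SF_2\to SG$ becomes nullhomotopic after localization at each odd prime, then pass to $[X,-]$ via a Postnikov obstruction argument.

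First I would apply Proposition~3.3 to write $\alpha=\alpha_1+\alpha_2$ with $\alpha_1$ in the image of $[X,SF_2]\to[X,SG]$ and $\alpha_2$ in the image of $[X,SO]\to[X,SG]$. Since $SO\to SG\to G/O$ is the defining nullhomotopic fibration, $\alpha_2$ maps to zero in $[X,G/O]$, and it remains to show that the image of $\alpha_1$ also vanishes.

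Next I would prove the key claim that $SF_2\to SG$ is $p$-locally nullhomotopic at every odd prime $p$. Since the stabilization factors as $SF_2\to SF_3\to SF_4\to\cdots\to SF\simeq SG$, it suffices to show $SF_2\to SF_3$ is $p$-locally null. Via the identifications $SF_{k-1}\simeq\Omega^{k-1}_0 S^{k-1}$ of Diagram~3.0, the induced map on $\pi_n$ (for $n\ge 1$) is the Freudenthal suspension $\Sigma\colon\pi_{n+2}(S^2)\to\pi_{n+3}(S^3)$. Using the Hopf fibration $S^1\to S^3\xrightarrow{\eta}S^2$, the map $\eta_*\colon\pi_m(S^3)\to\pi_m(S^2)$ is an isomorphism for $m\ge 3$; for $g\in\pi_m(S^2)$ write $g=\eta\circ f$ uniquely, giving $\Sigma g=\Sigma\eta\circ\Sigma f$. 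Since $\Sigma\eta\in\pi_4(S^3)\cong\mathbb{Z}/2$ has order two, the composite $(\Sigma\eta)_*(\Sigma f)$ is 2-torsion and therefore vanishes in any $p$-local group at odd $p$. Hence $\Sigma$ is zero at every odd prime for $m\ge 3$, so $SF_2\to SG$ induces zero on all homotopy groups at odd primes.

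Finally I would upgrade ``zero on $\pi_*$'' to ``zero on $[X,-]_{(p)}$'' by a Postnikov-tower obstruction argument: for finite CW $X$ with $N=\dim X$ the induced map factors through the Postnikov truncation $SG[N]$, and working inductively up the stages of $SG[N]_{(p)}$, the vanishing of the map on each $\pi_k$ ($k\le N$) forces each successive lift to be null via the $k$-invariants. Therefore $\alpha_1=0$ at every odd prime, and since $\alpha$ has odd order its image in $[X,G/O]$ is trivial. The main obstacle I expect is executing this final obstruction-theoretic induction cleanly; it is conceptually standard but requires care with naturality of $k$-invariants at odd primes and with the fact that $SF_2$ is only a 2-fold loop space rather than infinite loop. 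The finite-dimensional Postnikov truncation of the target keeps the induction manageable.
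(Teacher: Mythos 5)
Your first two paragraphs are on target: the reduction to $p$-primary components, the application of Proposition~3.3 to split $\alpha=\alpha_1+\alpha_2$, and the Hopf-fibration computation showing the Freudenthal suspension $\pi_m(S^2)\to\pi_{m+1}(S^3)$ is annihilated by $2$ for $m\geq 3$ are all correct, and are essentially the same ingredients the paper uses. The gap is in the final paragraph, and it is a genuine one.

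The step ``zero on $\pi_*$ forces each Postnikov lift to be null'' is false. A map inducing zero on all homotopy groups need not be nullhomotopic, and in particular need not induce the zero map on $[X,-]$ for finite $X$. Standard counterexample: the cup-square $x^2\colon\mathbb{CP}^\infty\to K(\mathbb{Z},4)$ induces zero on every homotopy group (the source has only $\pi_2$ and the target is $4$-coconnected), yet restricted to $X=\mathbb{CP}^2$ it is nonzero in $[X,K(\mathbb{Z},4)]=H^4(X;\mathbb{Z})$. What the Postnikov induction actually produces at stage $n$ is a class in $H^n(SF_2;\pi_n(SG)_{(p)})$ whose restriction along the Hurewicz image encodes $\pi_n(\phi)$; the full class lives in a much larger cohomology group of $SF_2$ that is not controlled by $\pi_*(\phi)$ alone, so its vanishing is not forced by your hypotheses. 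The intermediate claim that $SF_2\to SG$ is $p$-locally nullhomotopic at odd $p$ happens to be \emph{true}, but for a structural reason your argument does not supply: using the Hopf fibration $S^1\to S^3\xrightarrow{h} S^2$, the map $\Omega^2 h\colon\Omega^2_0 S^3\to\Omega^2_0 S^2$ is a homotopy equivalence (its fiber $\Omega^2 S^1$ is contractible), and under the identification $QS^0\simeq\Omega^2 QS^2$ the composite $\Omega^2_0 S^3\to\Omega^2_0 S^2\to Q_0 S^0$ is $\Omega^2$ applied to the map $S^3\to QS^2$ representing the stable Hopf class $\eta\in\pi_1^s\cong\mathbb{Z}/2$; after $p$-localization at an odd prime this class vanishes, and $\Omega^2$ of a nullhomotopic map is nullhomotopic.

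The paper avoids the map-level question entirely by working at the class level: the part $\alpha_1$ desuspends to a class in $[S^2X,S^2]_{(p)}$, which by the Hopf equivalence $\Omega^2_0 S^3\simeq\Omega^2_0 S^2$ lifts through $[S^2X,S^3]_{(p)}$, so stably $\alpha_1$ factors as $\overline{h}\tinycirc\beta$ with $\overline{h}\in\{S^1,S^0\}_{(p)}=0$; hence $\alpha_1=0$ directly. This is both shorter and free of the obstruction-theoretic issue. If you want to salvage your space-level route, replace the Postnikov induction by the $\Omega^2$-factorization above.
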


\begin{proof}
Since the finite abelian group $[X,SG]$ splits into a product of
the groups $[X,SG]_{(q)}$, where $q$ runs through all primes,
it will suffice to prove the result when the order of $\alpha$ is
a power of some {\sf odd} prime $p$.

By Proposition~\ref{prop33} we have $\alpha = \alpha_1+\alpha_2$ where
$\alpha_2$ lies in the image of $[X,SO]\rightarrow [X,SG]$
and $\alpha_1$ corresponds to an element in the image of
$[S^2X,S^2]\rightarrow \{X,S^0\}$ or equivalently an element in
the image of $[X,SF_2]\to[X,G/O]$.  Since $\alpha_2$ maps trivially
into $[X,G/O]$, it suffices to prove the result for $\alpha_1$, so
we might as well assume that $\alpha$ itself lies in the image of
$[X,SF_2]$.

Standard homotopy computations imply that $\pi_1(SF_2)\cong\Z$ and
the higher homotopy groups of $SF_2$ are finite (look at the homotopy
groups of $S^2$).  Furthermore, the inclusion $S^1=SO_2\subset
SG_2\subset SF_2$ induces an isomorphism of
fundamental groups, and this yields a homotopy equivalence
$SF_2\simeq SO_2\times \widetilde{SF_2}$, where the second factor
denotes the universal covering space of $SF_2$.  Since the composite
$$SO_2\longrightarrow SO_3\longrightarrow SG_3 \longrightarrow SG_3/SO_3
\longrightarrow G/O$$
is trivial, it is enough to prove the result when $\alpha$ lies
in the image of $[X,\widetilde{SF}_2]$, and since the homotopy
groups of $\widetilde{SF}_2$ and $\widetilde{\Omega^2_0S^2}$ are
finite, we can say that $\alpha$ corresponds to a class in
$\{X,S^0\}_{(p)}$ which lies in the image of $[S^2X,S^2]_{(p)}$.

If $h: S^3\rightarrow S^2$ is the Hopf bundle whose fiber is $S^1$,
then composition with $h$ defines a homotopy equivalence from
$\Omega^2_0 S^3$ to $\Omega^2_0 S^2$, where as before $\Omega^m_0
Y$ denotes the path component of the constant map in $\Omega^m Y$.
Therefore, it follows that $\alpha$ corresponds to a class in
$\{X,S^0\}_{(p)}$ which lies the in image of the composite
$$
h: [S^2X,S^3]_{(p)}\rightarrow [S^2X,S^2]_{(p)}
$$
and hence $\alpha$ factors homotopically as a composite
$\overline{h}\tinycirc\beta$, where $\beta$ lies in $\{X,S^1\}_{(p)}$
and $\overline{h}$ denotes the image of $h$ in the stable group
$\{S^3,S^2\}_{(p)}\cong \{S^1,S^0\}_{(p)}$.  Finally, since
$\{S^1,S^0\}\cong\mathbb{Z}_2$ we have $\{S^1,S^0\}_{(p)} = 0$,
and hence $\alpha$ corresponds to the trivial element of
$\{X,S^0\}_{(p)}$, where we interpret the latter as a subgroup of
$\{X,S^0\}$.
\end{proof}

\begin{cor}
The conclusion of Proposition~{\rm \ref{prop35}} also
holds if we replace $G/O$ by $G/\mathrm{PL}$
or $G/\mathrm{Top}$.
\end{cor}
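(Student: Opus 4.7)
The plan is to reduce the corollary directly to Proposition 3.6 via naturality of the surgery classifying space constructions. Specifically, the inclusions of categories $O \to \mathrm{PL} \to \mathrm{Top}$ map into $G$ (or $SG$) and produce a commutative ladder of fiber sequences
\[
\begin{CD}
O @>>> G @>>> G/O \\
@VVV @| @VVV \\
\mathrm{PL} @>>> G @>>> G/\mathrm{PL} \\
@VVV @| @VVV \\
\mathrm{Top} @>>> G @>>> G/\mathrm{Top}
\end{CD}
\]
so that the natural maps $G/O \to G/\mathrm{PL} \to G/\mathrm{Top}$ fit together with the three ``universal normal invariant'' maps $SG \to G/O$, $SG \to G/\mathrm{PL}$, $SG \to G/\mathrm{Top}$ into a commutative diagram.

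Given such a diagram, I would apply $[X,-]$ to obtain the commutative triangle
\[
\begin{CD}
[X,SG] @>>> [X,G/O] \\
@| @VVV \\
[X,SG] @>>> [X,G/\mathrm{PL}] \\
@| @VVV \\
[X,SG] @>>> [X,G/\mathrm{Top}]
\end{CD}
\]
in which each map from $[X,SG]$ to $[X,G/\mathrm{PL}]$ or $[X,G/\mathrm{Top}]$ factors through $[X,G/O]$. Now suppose $\alpha\in[X,SG]$ has odd order and lies in the image of $[X,SG_3]\to[X,SG]$. Then by Proposition 3.6 its image in $[X,G/O]$ is trivial, and hence its images in $[X,G/\mathrm{PL}]$ and $[X,G/\mathrm{Top}]$ are also trivial.

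Since the corollary is an immediate consequence of naturality together with Proposition 3.6, there is no real obstacle here; the only point to verify carefully is the commutativity of the ladder of fiber sequences above, which is standard and follows from the definitions of $G/O$, $G/\mathrm{PL}$, and $G/\mathrm{Top}$ as homotopy fibers of the maps classifying stable vector, PL, or topological block bundle reductions of a spherical fibration.
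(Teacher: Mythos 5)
Your proposal is correct and is exactly the intended argument; the paper gives no explicit proof of the corollary, treating it as immediate from the naturality you describe. The only nit is that the reference should be to Proposition 3.5 (the one about odd order classes lifting to $[X,SG_3]$), not 3.6, but the substance---that the maps $SG\to G/O\to G/\mathrm{PL}\to G/\mathrm{Top}$ commute by naturality of the surgery fibrations, so triviality in $[X,G/O]$ forces triviality in $[X,G/\mathrm{PL}]$ and $[X,G/\mathrm{Top}]$---is right.
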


\section{Normal Invariants for Tangential Homotopy Lens Spaces}

Throughout this section $p$ will denote a fixed odd prime.

If $f: M\rightarrow N$ is a homotopy equivalence of compact
topological manifolds (possibly with boundary) and
$\eta(f)\in[N,G/\mathrm{Top}]$ is its normal invariant, then $f$
is a tangential homotopy equivalence if and only if a canonical
map from $[N,G/\mathrm{Top}]$ to $[N,B\,\mathrm{STop}]$ sends
$\eta(f)$ to zero ({\it e.g.\/}, see \cite{MadMilJ}), and by
the exactness of the fibration sequence $SG\rightarrow
G/\mathrm{Top}\rightarrow B\,\mathrm{STop}$ the image vanishes if
and only if $\eta(f)$ lies in the image of the associated map from
$[N,SG]$ to $[N,G/\mathrm{Top}]$.  In this section we shall
describe this image when $N$ is a $\Z_p$ lens space.  Our analysis
is based upon fundamental results on the structure of the
localized spaces $SG_{(p)}$, $G/O_{(p)}$, $G/\mathrm{Top}_{(p)}$,
$BSO_{(p)}$, $B\,\mathrm{STop}_{(p)}$ and similar objects; some
basic references are \cite{MadMilJ}, Chapter V of \cite{May} and
Lecture 4 of \cite{Ad4}.

We are particularly interested in the structure of $SG_{(p)}$ and
$G/\mathrm{Top}_{(p)}$.  Results of Sullivan (compare
\cite{MadMilJ}) imply that the localized spaces $BSO_{(p)}$ and
$G/\mathrm{Top}_{(p)}$ are homotopy equivalent and that
$G/O_{(p)}$ is homotopy equivalent to $BSO_{(p)}\times \CokJ_{(p)}$
for some space $\CokJ_{(p)}$ (see \cite{MadMilJ} for the definition
of the latter). Furthermore, if $J_p$ is defined as the fiber of the
map $\psi^r-1: BSO_{(p)}\rightarrow BSO_{(p)}$, where $r$ is a
primitive root of unity mod $p^2$ and $\psi^r$ is the Adams
operation in $K$-theory, then there is a homotopy equivalence from
$SG_{(p)}$ to $J_p\times \CokJ_{(p)}$ such that the following
diagram is homotopy commutative:

\[
\begin{CD}
SG_{(p)} @>>> G/O_{(p)} @>>> G/\mathrm{Top}_{(p)}\\
@V \simeq VV @V \simeq VV @V \simeq VV\\
J_p\times \CokJ_{(p)} @>\beta\times 1>> BSO_{(p)}\times \CokJ_{(p)}
@>\varphi>> BSO_{(p)}
\end{CD}
\]

In this diagram $\beta: J_p\rightarrow BSO_{(p)}$ is the homotopy
fiber of $\psi^r-1$ and $\varphi$ factors up to homotopy as a
composite
\[
\begin{CD}
BSO_{(p)}\times \CokJ_{(p)} @> \mathrm{proj.} >> BSO_{(p)} @>
\varphi '
>> BSO_{(p)}
\end{CD}
\]
because ${\widetilde{K}}(\CokJ_p) = 0$ ({\it cf. } \cite{MadMilJ},
Theorem 5.22, p. 115).

Since $[N,J_p]$ can often be computed fairly directly but
$[N,\CokJ_{(p)}]$ generally cannot, these splittings are very
helpful for describing the image of $[N,SG_{(p)}]$ in
$[N,G/\mathrm{Top}_{(p)}]$.

In addition to the splittings described above, there are also
splittings of $BSO_{(p)}$ that will be useful in this section.  The
results of \cite{MadMilJ} and \cite{Ad4} imply that the
localized complex $K$-theory spectrum $K_{(p)}$ splits into a sum
of $(p-1)$ periodic spectra $E_{\alpha} K_{(p)}$, where $\alpha$
runs through the elements of $\Z_{p-1}$.  Each of these spectra is
periodic of period $2p-2$, and the coefficient groups $E_\alpha
K_{(p)}(S^n)$ are given by $\Z_{(p)}$ if $n\equiv 2\alpha
\mathrm{~mod~}\ 2p-2$ and zero otherwise.  If we view the localized
real $K$-theory spectrum $KO_{(p)}$ as the direct summand given by
the self-conjugate part of $K_{(p)}$, then $KO_{(p)}$ corresponds
to the sum of the spectra $E_\beta K_{(p)}$, where $\beta$ runs
through all the {\sl even\/} elements of $\Z_{p-1}$.  We shall be
particularly interested in $E_0 K_{(p)}$, which is also known as the first
($p$-local) Morava $K$-theory $K(1)$, with $K(1)(S^n)=\Z_{(p)}$ if
$n\equiv 0\ \ \mathrm{mod}\ 2p-2$ and 0 otherwise (see \cite{Wur} for
background on Morava $K$-theories).

If $L$ is a $\Z_p$ lens space, then there is a canonical map $k_L$
from $L$ to the classifying space $B\Z_p$, and our analysis of the
image of $[L,SG]\rightarrow [L,G/\mathrm{Top}]$ begins with a
study of the analogous problem with $B\Z_p$ replacing $L$.  Both
$[B\Z_p,SG]\cong \{B\Z_p,S^0\}$ and $[B\Z_p,BSO]\cong
\widetilde{KO}(B\Z_p)$ are well understood; results of D.W.
Anderson (summarized in \cite{And1}, with full details in \cite{And2})
imply that the latter (with the group operation given by direct sum)
is algebraically
isomorphic to the completion of the ideal $IO(\Z_p)$ in the real
representation ring $RO(\Z_p)$ given by all 0-dimensional virtual
representations (this also follows directly from \cite{At}), while
the proof of the Segal Conjecture for $\Z_p$ (see \cite{Rav2} and
\cite{AdGunMill}) implies that $\{B\Z_p,S^0\}$ is algebraically
isomorphic to
the completion of the ideal $IA(\Z_p)$ in the Burnside ring
$A(\Z_p)$ given by all virtual finite $\Z_p$-sets with virtual
cardinality 0. Although the set theoretic isomorphism from
$\{B\Z_p,S^0\}$ to $[B\Z_p,SG]$ is not additive, one can prove
that the latter is also algebraically isomorphic to the completion
of $IA(\Z_p)$
using the methods of \cite{At}, and this is explained in
\cite{Lai}. The ideal $IA(\Z_p)$ is infinite cyclic and it turns
out that the image of one generator in the completed ideal
$\widehat{IA(\Z_p)}\cong \{B\Z_p,S^0\}$ corresponds to the reduced
stable homotopy-theoretic transfer $B\Z_p\rightarrow S^0$
associated to the standard $p$-fold covering $E\Z_p\rightarrow
B\Z_p$ (see \cite{KP1} and \cite{KP2}), whose total space is contractible.

It is fairly straightforward to prove that the completion
$\widehat{IA(\Z_p)}$ is topologically and additively isomorphic to
the additive $p$-adic integers $\widehat{\Z_{(p)}}$ and
$\widehat{IO(\Z_p)}$ is similarly isomorphic to a sum of
$\frac{1}{2}(p-1)$ copies of the $\widehat{\Z_{(p)}}$.  One can
describe these groups and their interrelationships more precisely
as follows:

Let $I(\Z_p)$ be the ideal in the complex representation ring
$R(\Z_p)$ given by the kernel of the virtual dimension map from
$R(\Z_p)$ to $\Z$.  Then $\widetilde{K}(B\Z_p)$ is isomorphic to
the completion $\widehat{I(\Z_p)}$ by \cite{At}, and hence it is a
free $\widehat{Z_p}$-module on $(p-1)$ generators.  We can choose
these free generators to have the form $\mathbf{e}_a$, where $a$
runs through the nonzero elements of $\Z_p$, and if $r$ is a
primitive root of unity mod $p^2$ then the Adams operation
$\psi^r$ on $\widetilde{K}(B\Z_p)$ sends $\mathbf{e}_a$ to
$\mathbf{e}_{ra}$; furthermore, if $\theta$ is the additive
automorphism of $\Z_p$ sending $a\in\Z_p$ to $ra$ and $B\theta$ is
the induced self-map of $B\Z_p$ which induces $\theta$ on the
fundamental group level (so $B\theta$ is unique up to homotopy)
then the induced automorphism $B\theta^*$ in $K$-theory also sends
$e_a$ to $e_{ra}$.  Furthermore, the complexification map from
$\widetilde{KO}(B\Z_p)$ to $\widetilde{K}(B\Z_p)$ is split
injective, and its image is the free submodule whose generators
have the form $e_a+e_{-a}$, where $a$ runs through all nonzero
elements of $\Z_p$ (note that there are $\frac{1}{2}(p-1)$
elements of this form).  With this background, we can describe a
canonical homomorphism from $[B\Z_p,SG]$ to $[B\Z_p, BSO]$ as
follows:

\begin{prop}\label{prop41}
Let $F: SG_{(p)}\rightarrow BSO_{(p)}$ be the composite
\[
\begin{CD}
SG_{(p)} @>>> G/O_{(p)}\simeq BSO_{(p)}\times \CokJ_p @>>>
BSO_{(p)}
\end{CD}
\]
where the final arrow is coordinate projection.  Then the image of
$[B\Z_p,SG]\cong [B\Z_p,SG_{(p)}]$ in $[B\Z_p,BSO]\cong
[B\Z_p,BSO_{(p)}]$ corresponds to the split free submodule of
$\widetilde{K}(B\Z_p)\cong \bigoplus^{p-1}\widehat{\Z_{(p)}}$
generated by the sum of the basis elements $\sum_{a}
\mathbf{e}_a$, and the image corresponding to the direct summand
$E_0 K_{(p)}$ in $KO(B\Z_p)\cong KO_{(p)}(B\Z_p)$.
\end{prop}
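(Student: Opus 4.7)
The strategy is to exploit the splitting $SG_{(p)} \simeq J_p \times \CokJ_{(p)}$ stated just before the proposition. Under this splitting (together with the parallel $G/O_{(p)} \simeq BSO_{(p)} \times \CokJ_{(p)}$), the map $F$ factors as
\begin{equation*}
SG_{(p)} \simeq J_p \times \CokJ_{(p)} \xrightarrow{\beta \times \mathrm{id}} BSO_{(p)} \times \CokJ_{(p)} \longrightarrow BSO_{(p)},
\end{equation*}
so that the image of $F_*$ on $[B\Z_p, SG_{(p)}]$ equals the image of $\beta_*\colon [B\Z_p, J_p] \to [B\Z_p, BSO_{(p)}]$. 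Since $\beta$ is by construction the homotopy fiber of $\psi^r - 1$ on $BSO_{(p)}$, the associated fiber sequence produces exactness of
\begin{equation*}
[B\Z_p, J_p] \xrightarrow{\beta_*} [B\Z_p, BSO_{(p)}] \xrightarrow{(\psi^r-1)_*} [B\Z_p, BSO_{(p)}]
\end{equation*}
at the middle term, so the image of $\beta_*$ coincides with $\ker(\psi^r - 1)_*$ acting on $\widetilde{KO}(B\Z_p) \cong [B\Z_p, BSO_{(p)}]$.

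Next I would compute this kernel using the description of $\widetilde{K}(B\Z_p) \cong \bigoplus_{a \neq 0} \widehat{\Z_{(p)}}\, \mathbf{e}_a$ recalled in the text, on which $\psi^r$ permutes the basis by $\mathbf{e}_a \mapsto \mathbf{e}_{ra}$. Because $r$ is a primitive root mod $p^2$ and hence also mod $p$, multiplication by $r$ on the nonzero elements of $\Z_p$ is a single $(p-1)$-cycle. Thus the kernel of $\psi^r - 1$ on $\widetilde{K}(B\Z_p)$ is the rank-one free $\widehat{\Z_{(p)}}$-submodule spanned by $\sum_a \mathbf{e}_a$. Restricting to $\widetilde{KO}(B\Z_p)$, whose image in $\widetilde{K}(B\Z_p)$ is the span of $\{\mathbf{e}_a + \mathbf{e}_{-a}\}$, the kernel is generated by $\sum_a(\mathbf{e}_a + \mathbf{e}_{-a}) = 2\sum_a \mathbf{e}_a$; since $2$ is a unit in $\widehat{\Z_{(p)}}$ for odd $p$, this is precisely the submodule spanned by $\sum_a \mathbf{e}_a$ claimed in the proposition.

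Finally, to identify this rank-one line with the summand $E_0 K_{(p)}$, I would use the Adams splitting $K_{(p)} \simeq \bigvee_{\alpha \in \Z_{p-1}} E_\alpha K_{(p)}$: by Hensel's lemma $\psi^r$ acts on $E_\alpha K_{(p)}(X)$ through a $p$-adic unit lifting $r^\alpha \pmod{p}$, and for $\alpha \neq 0$ this unit is not congruent to $1$ mod $p$, so $\psi^r - 1$ is invertible on $E_\alpha K_{(p)}(B\Z_p)$; only on $E_0 K_{(p)}(B\Z_p)$ does $\psi^r$ act as the identity. Consequently $\ker(\psi^r - 1) = E_0 K_{(p)}(B\Z_p)$, and since $E_0 K_{(p)}$ is self-conjugate this summand sits inside $KO_{(p)}(B\Z_p)$, matching the proposition. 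The main obstacle will be this last identification: verifying that the abstract fixed submodule $\widehat{\Z_{(p)}} \cdot \sum_a \mathbf{e}_a$ genuinely coincides with the $E_0 K_{(p)}$ summand requires either appealing to a known eigenspace decomposition for $\psi^r$ under the Adams splitting or constructing that decomposition by hand from the Adams idempotents, keeping careful track of $p$-adic completion on $B\Z_p$ and of the passage between complex and real coefficients.
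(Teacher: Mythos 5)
Your argument for the first half is essentially the paper's, phrased through the fibration $J_p \to BSO_{(p)} \xrightarrow{\psi^r-1} BSO_{(p)}$ rather than through the fibration $SG \to G/O \xrightarrow{V} BSO$; both routes reduce to computing $\ker(\psi^r-1)$ on $\widetilde{KO}(B\Z_p)$, and the explicit basis manipulation showing the kernel is $\widehat{\Z_{(p)}}\cdot\sum_a\mathbf{e}_a$ is identical. Where you genuinely diverge is the identification of this line with $E_0 K_{(p)}(B\Z_p)$. You propose to show directly that $\psi^r-1$ is zero on the $E_0$-summand and invertible on each $E_\alpha$ with $\alpha\neq 0$, so that the kernel equals $E_0 K_{(p)}(B\Z_p)$ on the nose. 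This is correct for $B\Z_p$, but only because $\psi^r$ has finite order $p-1$ on $\widetilde{K}(B\Z_p)$, hence is diagonalizable over $\widehat{\Z_{(p)}}$ with eigenvalues the $(p-1)$-st roots of unity; for general $X$ the Adams summand $E_\alpha K_{(p)}(X)$ carries contributions from all degrees $2n$ with $n\equiv\alpha$ mod $p-1$, on which $\psi^r$ acts by different powers $r^n$, so your generic statement ``$\psi^r$ acts on $E_\alpha K_{(p)}(X)$ through a $p$-adic unit lifting $r^\alpha$'' does not hold as stated. The paper avoids this delicacy by using a weaker input and a structural argument: it quotes only the \emph{injectivity} of $\psi^r-1$ on the complementary summand $E_0^\bot K_{(p)}(B\Z_p)$ (from Madsen--Milgram) to get the containment $M\subseteq E_0 K_{(p)}(B\Z_p)$, and then promotes this to equality by observing that $M$ is a direct summand of the free $\widehat{\Z_{(p)}}$-module $\widetilde{KO}(B\Z_p)$, while $E_0 K_{(p)}(B\Z_p)\cong\widehat{\Z_{(p)}}$ (Kambe) and $E_0^\bot K_{(p)}(B\Z_p)$ is torsion-free, so a proper inclusion $M\subsetneq E_0 K_{(p)}(B\Z_p)$ would force torsion in the complementary quotient. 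Your approach, if you supply the root-of-unity eigenvalue argument specific to $B\Z_p$, gives the cleaner and more conceptual statement that the kernel \emph{is} the $E_0$-summand; the paper's approach trades that precision for a softer argument that only needs an injectivity fact citable from the literature. You correctly flagged that this last identification is the remaining obstacle; the fix is to specialize your eigenvalue claim to $B\Z_p$ (where it is justified by the finite order of $\psi^r$) rather than asserting it for arbitrary $X$.
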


\begin{proof}
If $V: G/O\rightarrow BSO$ is the homotopy fiber of
$BSO\rightarrow BSG$, then by construction the composite
\[
\begin{CD}
BSO_{(p)} @>\mathrm{slice}>> BSO_{(p)}\times \CokJ_p\simeq
G/O_{(p)} @> V_{(p)} >> BSO_{(p)}
\end{CD}
\]
\noindent
is given by $\psi^r - 1$.  Now $V_{(p)}$ is trivial on $\CokJ_p$,
and therefore for all connected CW complexes the image of $[X,SG_{(p)}]$ in
$[X,G/O_{(p)}]$ will be the kernel of the map $V_{(p)*}:
[X,G/O_{(p)}]\rightarrow [X,BSO_{(p)}]$.  If we combine these we
see that the kernel of $V_{(p)*}$ is generated by $[X,\CokJ_p]$
and the kernel of $\psi^r - 1$ on $\widetilde{KO}_{(p)}(X)$.  If
we let $X=B\Z_p$, then the localized and unlocalized groups are
isomorphic, and if we expand an element $\xi$ of
$\widetilde{KO}_{(p)}(B\Z_p)$ as $\sum c_a \mathbf{e}_a$ for
suitable coefficients $c_a$ (note that $c_a = c_{-a}$), then
$\xi$ lies in the kernel of $\psi^r - 1$ if and only if $c_{ra} =
c_a$ for all $a$.  We claim this happens if and only if the
coefficients $c_a$ are all equal.  Sufficiency is obvious; on the
other hand, it follows by induction that $c_{r^ka} = c_a$ for all
$k$ and $a$, and since the powers $r^k$ exhaust the nonzero
elements of $\Z_p$ we must have $c_a = c_b$ for all $a,b\neq 0$.
If we now denote the image of $[B\Z_p,SG]$ in
$\widetilde{KO}(B\Z_p)$ as $M$, the preceding discussion shows
that $M$ is a direct summand of $\widetilde{KO}(B\Z_p)$ which is
isomorphic to $\widehat{\Z_{(p)}}$ and the complementary summand
$M'$ is a free $\widehat{\Z_{(p)}}$-module on $(p-2)$ generators.
In particular, $M'\cong \widetilde{KO}(B\Z_p)/M$ is torsion free.

\begin{clm}
$M$ is contained in the summand $E_0 K_{(p)}(B\Z_p)$
\end{clm}

\begin{proof}
To see this, let $E_0^\bot K_{(p)}$ denote the sum of the other
cohomology theories $E_i K_{(p)}$, and let $\overline{M}$ denote
the projection of $M$ onto $E_0^\bot K_{(p)}$ with respect to the
splitting $\widetilde{KO}(B\Z_p)\cong E_0 K_{(p)}(B\Z_p)\oplus
E_0^\bot K_{(p)}(B\Z_p)$.  We know that $\psi^r - 1$ restricted to
$M$ is trivial, but we also know that $\psi^r - 1$ restricted to
$E_0^\bot K_{(p)}(B\Z_p)$ is injective (compare \cite{MadMilJ}), and
these combine to imply that $\overline{M}$ is trivial, so that $M$
must be contained in $E_0 K_{(p)}(B\Z_p)$.
\end{proof}

The results of \cite{Kam} imply that the summand $E_0
K_{(p)}(B\Z_p)$ of $\widetilde{K}(B\Z_p)$ must also be isomorphic
to $\widehat{\Z_{(p)}}$, and the complementary summand $E_0^\bot
K_{(p)}(B\Z_p)$ must be torsion free.  Therefore the quotient
$\widetilde{KO}(B\Z_p)/M$ is isomorphic to the direct sum of
$E_0^\bot K_{(p)}(B\Z_p)$ and a quotient $M_1\cong
\widehat{\Z_{(p)}}/M$, where $M$ is also isomorphic to
$\widehat{\Z_{(p)}}$.  If $M$ is a proper subgroup of
$\widehat{\Z_{(p)}}$, then the quotient $M_1$ must be a nontrivial
finite cyclic $p$-group, and therefore the quotient $M_1$ has
nontrivial elements of finite order.  Since $M$ is a direct summand
of $\widetilde{KO}(B\widehat{\Z_{(p)}})$ and the latter is a
direct sum of $\frac{1}{2}(p-1)$ copies of $\widehat{\Z_{(p)}}$,
this cannot happen and hence we must have $M=E_0 K_{(p)}(B\Z_p)$.
\end{proof}

We also have a similar conclusion regarding the image of
$[B\Z_p,SG]$ in $[B\Z_p,G/\mathrm{Top}]\cong
[B\Z_p,G/\mathrm{Top}_{(p)}]$; as noted before, the results of
Sullivan show the codomain is isomorphic to $\widetilde{KO}(B\Z_p)$.

\begin{prop}\label{prop42}
The image of the composite
\[
\begin{CD}
[B\Z_p,SG] @>>> [B\Z_p,G/\mathrm{Top}]~~\cong~~ \widetilde{KO}(B\Z_p)
\end{CD}
\]
is the image of $E_0 K_{(p)}(B\Z_p)$, and the kernel of this map
is trivial.
\end{prop}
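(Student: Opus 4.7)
The plan is to deduce both assertions from Proposition 5.1 together with the Sullivan splittings of $SG_{(p)}$, $G/O_{(p)}$ and $G/\mathrm{Top}_{(p)}$ recalled at the beginning of this section. In the homotopy-commutative diagram shown there, the composite $SG_{(p)} \to G/O_{(p)} \to G/\mathrm{Top}_{(p)}$ becomes
$$
J_p \times \CokJ_{(p)} \xrightarrow{\beta \times 1} BSO_{(p)} \times \CokJ_{(p)} \longrightarrow BSO_{(p)},
$$
whose second arrow is the projection onto $BSO_{(p)}$ followed by the self-map $\varphi'$ of $BSO_{(p)}$. Because $\varphi'$ comes from Sullivan's $K$-theoretic splitting it commutes with the Adams operations, and hence preserves each eigensummand $E_\alpha K_{(p)}$; in particular it restricts to a self-equivalence of $E_0 K_{(p)}(B\Z_p)$. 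Combined with Proposition 5.1, which identifies the image of $[B\Z_p, SG]$ in $[B\Z_p, BSO]$ under the projection $G/O_{(p)} \to BSO_{(p)}$ with $E_0 K_{(p)}(B\Z_p)$, this yields the first assertion.

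For the kernel, I would use a rank-one comparison. The group $[B\Z_p, SG]$ is additively isomorphic to the completed augmentation ideal $\widehat{IA(\Z_p)} \cong \widehat{\Z_{(p)}}$ by the discussion preceding Proposition 5.1 (equivalently, by the Segal conjecture for $\Z_p$), and the proof of Proposition 5.1 shows that $E_0 K_{(p)}(B\Z_p)$ is likewise isomorphic to $\widehat{\Z_{(p)}}$. The map between them is a continuous homomorphism of topological abelian groups, since it is induced from a map of $H$-spaces and both sides inherit the inverse-limit topology from the skeletal filtration of $B\Z_p$. Any continuous additive endomorphism of $\widehat{\Z_{(p)}}$ is multiplication by a fixed element; the image computation of the first paragraph shows that our map surjects onto $E_0 K_{(p)}(B\Z_p)$, forcing this element to be a unit of $\widehat{\Z_{(p)}}$, and therefore the kernel is trivial.

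The hardest point to nail down is verifying that $\varphi'$ is diagonal with respect to the Adams-eigenspace decomposition of $\widetilde{KO}(B\Z_p)$ and that the topological and $\widehat{\Z_{(p)}}$-module structures on the two sides agree compatibly enough to pass from surjectivity to injectivity. Both facts follow from the standard treatment of Sullivan's splittings in \cite{MadMilJ}, but the bookkeeping with the identifications needs to be spelled out carefully.
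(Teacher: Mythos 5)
The overall architecture of your argument (feed Proposition 5.1 through the Sullivan splitting to get the image, then a rank-one comparison to get the kernel) matches the paper's plan, and the kernel paragraph is essentially identical to the paper's: both identify $[B\Z_p,SG]$ and its image with $\widehat{\Z_{(p)}}$ and observe that a surjective endomorphism of $\widehat{\Z_{(p)}}$ is injective. The problem is in the first paragraph, which is where all the real work of Proposition 5.2 lives.

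You assert that ``because $\varphi'$ comes from Sullivan's $K$-theoretic splitting it commutes with the Adams operations, and hence preserves each eigensummand $E_\alpha K_{(p)}$; in particular it restricts to a self-equivalence of $E_0 K_{(p)}(B\Z_p)$.'' Two separate claims are bundled here and neither is free. First, compatibility of $\varphi'$ with $\psi^r$ is not a formal consequence of the existence of a Sullivan splitting: $\varphi'$ is determined by comparing two different $KO$-theoretic identifications of $BSO_{(p)}$ (one coming from the fiber of $BSO\to BSG$, the other from the surgery orientation of $G/\mathrm{Top}$), and the paper handles the ``image lands in $E_0$'' issue by a different mechanism entirely---it notes that $\psi^r$ on $\widetilde{KO}(B\Z_p)$ is $B\theta^*$ for an automorphism $\theta$ of $\Z_p$, so that $\varphi'_*$ on $[B\Z_p,G/\mathrm{Top}]$ commutes with $B\theta^*-1$ because the two act on opposite factors of $[B\Z_p,BSO_{(p)}]$, not because $\varphi'$ commutes with $\psi^r$ on $BSO_{(p)}$. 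Second, and more seriously, preservation of a summand does not give a self-equivalence of that summand. The actual content of Proposition 5.2 is that $\varphi'$ restricted to $E_0 K_{(p)}$ is an isomorphism, and the paper has to earn this: on $\pi_{2k(p-1)}(BSO)_{(p)}$ the map is (modulo torsion) multiplication by $c=(2^{k(p-1)-1}-1)\cdot\textsc{num}(B_{k(p-1)}/(2kp-2k))$, and the paper uses the commutative square with $\pi_{2k(p-1)}(J_p)\cong\Z_{p^m}$ to force $c$ to be prime to $p$. Without that argument (or some substitute for it), $\varphi'|_{E_0}$ could in principle fail to be surjective, and then both the image computation and the surjectivity your kernel argument relies on would fail. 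You flag this in your closing paragraph as ``the hardest point to nail down,'' and you are right to worry: it is not bookkeeping, it is the theorem.
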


\begin{proof}
To prove this first statement, we need to analyze the image of the
map
\[
E_0 K_{(p)}(B\Z_p)~~\to~~\widetilde{KO}(B\Z_p)~~\subseteq~~
[B\Z_p,G/O_{(p)}]~~\to~~[B\Z_p,G/\mathrm{Top}_{(p)}]~~\cong~~
\widetilde{KO}(B\Z_p)
\]
and the composite of this map with the splitting retraction
$\widetilde{KO}(B\Z_p)\rightarrow E_0 K_{(p)}(B\Z_p)$.  We claim
this composite is an isomorphism. Since the composite is given by
a natural transformation of cohomology theories, it suffices to
show that this transformation induces an isomorphism of cohomology
theories, and the latter in turn reduces to showing that the
induced self-maps of the localized homotopy groups
$\pi_{2k(p-1)}(BSO)_{(p)}$ are isomorphisms.

We have the following commutative diagram, in which each group
except $\pi_{2k(p-1)}(BJ_p)$ is a direct sum of $\Z_{(p)}$ and a
finite abelian $p$-group:

\[
\begin{CD}
\pi_{2k(p-1)}(BSO)_{(p)} &
@>\mathrm{splitting}>\mathrm{injection}> \pi_{2k(p-1)}(G/O)_{(p)}
&
@>\beta^O_*>> \pi_{2k(p-1)}(BSO)_{(p)} & @>e_*>> \pi_{2k(p-1)}(J_p)\\
@V \theta VV & @V \theta' VV & @VVV & @VV = V\\
\pi_{2k(p-1)}(BSO)_{(p)} & @>\cong>\mathrm{Sullivan}>
\pi_{2k(p-1)}(G/\mathrm{Top})_{(p)} & @>\beta^{\mathrm{Top}}_*>>
\pi_{2k(p-1)}(B\,\mathrm{STop})_{(p)} & @>>> \pi_{2k(p-1)}(J_p)
\end{CD}
\]

As noted in \cite{MadMilJ} (see p. 117), modulo torsion the two
vertical arrows on the left are multiplication by the number
explicitly given on that page.  The
maps $\beta^O_*$ and $\beta^{\mathrm{Top}}_*$ give the underlying
bundles, and the first line is exact $\beta^O_*$ and $e_*$;
furthermore, up to units in $\Z_{(p)}$, the map $\beta^O_*$ is
multiplication by the order of the inverse of the
$J$-homomorphism, and this order is divisible by $p$.  On the
other hand, up to torsion and units in $\Z_{(p)}$, the maps
$\pi_{2k(p-1)}(G/O)_{(p)}\rightarrow
\pi_{2k(p-1)}(G/\mathrm{Top})_{(p)}$ and $\pi_{2k(p-1)}(BSO)_{(p)}
\rightarrow \pi_{2k(p-1)}(B\,\mathrm{STop})_{(p)}$ are
multiplication by
$$c=(2^{k(p-1)-1}-1)\cdot \textsc{num}\left(
{\frac {B_{k(p-1)}}{2kp-2k}}\right)$$
where ``$\textsc{num}(...)$'' denotes the numerator of a
fraction reduced to least terms and $B_{k(p-1)}$ is the appropriate
Bernoulli number
(see \cite{MadMilJ}, p. 117,  for the first map and \cite{Br} for
the second). Therefore, modulo torsion, the right hand square is
given by
\[
\begin{CD}
\Z_{(p)} @>>> \Z_{p^m}\\
@V c VV @VV = V\\
\Z_{(p)} @>>> \Z_{p^m}
\end{CD}
\]
\noindent
where the top and bottom arrows are epimorphisms and the top arrow
is the standard quotient projection.  Such a diagram can exist only
if $c$ is relatively prime to $p$, and therefore the two vertical
arrows $\theta$ and $\theta'$ in the large previous diagram are
isomorphisms; this is what we wanted to prove.

Finally, we need to check that the image of $[B\Z_p,SG]$ in
$[B\Z_p,G/\mathrm{Top}]$ corresponds to $E_0 K_{(p)}(B\Z_p)$ and
that $[B\Z_p,SG]$ is mapped isomorphically onto its image.  By the
preceding discussion we know that this image is a direct summand
of $[B\Z_p,G/\mathrm{Top}]\cong\widetilde{KO}(B\Z_p)$ and is
isomorphic to $\widehat{\Z_{(p)}}$.  Thus the map from
$[B\Z_p,SG]$ to its image is given by a surjective homomorphism
from $\widehat{\Z_{(p)}}$ to itself.  Since every such surjection
is an isomorphism, we see that $[B\Z_p,SG]$ must be mapped
isomorphically to its image.  To prove that the image in
$[B\Z_p,G/\mathrm{Top}]\cong \widetilde{KO}(B\Z_p)$ is $E_0
K_{(p)}(B\Z_p)$ we can use the reasoning in the proof of
Proposition 4.1 to reduce the question to checking that the image
of $[B\Z_p,SG]$ in $[B\Z_p,G/\mathrm{Top}]\cong
\widetilde{KO}(B\Z_p)$ is contained in the kernel of $\psi^r - 1$.
We have already noted that on $\widetilde{KO}(B\Z_p)$ one has
$\psi^r = B\theta^*$ for some automorphism $\theta$ of $\Z_p$, so
everything reduces to showing that the map
\[
\begin{CD}
\widetilde{KO}(B\Z_p)~\subseteq~ [B\Z_p,G/\mathrm{Top}] @>>>
[B\Z_p,G/\mathrm{Top}]~\cong~ \widetilde{KO}(B\Z_p)
\end{CD}
\]
sends the kernel of $B\theta^* - 1$ to itself.  Since the
displayed map arises from some self-map of $BSO_{(p)}$, it follows
immediately that this mapping does send the kernel to itself,
proving the remaining assertions in the proposition.
\end{proof}

Now let $L$ be a $(2n-1)$-dimensional lens space, and let $\eta_L:
L\rightarrow B\Z_p$ be its classifying map.  We may assume that
$B\Z_p$ is constructed so that its $(2n-1)$-skeleton is $L$, and
we shall do so henceforth.  Our next objective it to derive
analogs of Propositions 4.1 and 4.2 in which $B\Z_p$ is replaced
by $L$.

More precisely, we need to extend our observations about the map
$$[B\Z_p,SG]\rightarrow [B\Z_p,G/\mathrm{Top}]$$
into an effective
analysis of all the objects and morphisms in the following commutative
diagram:
\[
\begin{CD}
[B\Z_p,SG] @>>> [B\Z_p,G/\mathrm{Top}] @>\cong>>
\widetilde{KO}(B\Z_p)\\
@V\eta^*VV @V\eta^*VV @V\eta^*VV\\
[L,SG] @>>> [L,G/\mathrm{Top}] @>\cong>> \widetilde{KO}(L)
\end{CD}
\]
The results of \cite{Kam} show that the induced map in reduced
$KO$-theory is surjective and yield an explicit description of its
kernel; the following result on the summand $E_0K_{(p)}$ is a
straightforward consequence of the methods and conclusion of
\cite{Ad4}:

\begin{prop}
If $L$ is a $(2n-1)$-dimensional $\Z_p$ lens space, then the
Atiyah-Hirzebruch spectral sequence of $E_0K_{(p)}(L)$ collapses,
and this group is cyclic of order $p^m$ where
$m=\left[\frac{n-1}{p-1}\right]$ (and $[ \cdots ]$ denotes the greatest
integer function).
\end{prop}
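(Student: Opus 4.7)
The plan is to compute $E_0 K_{(p)}(L)$ via the Atiyah--Hirzebruch spectral sequence (AHSS), show it collapses, and then solve the resulting extension problem using the canonical complex line bundle $\eta\to L$ and the identity $\eta^p=1$. Throughout, set $m=[(n-1)/(p-1)]$.

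First I identify the $E_2$-page. Since $E_0 K_{(p)}^t(\mathrm{pt})=\Z_{(p)}$ when $t\equiv 0\pmod{2(p-1)}$ and vanishes otherwise, and $H^s(L;\Z_{(p)})$ equals $\Z_{(p)}$ for $s=0$ and $s=2n-1$, equals $\Z/p$ for $s=2j$ with $1\leq j\leq n-1$, and vanishes elsewhere, the entries of $E_2^{s,t}$ on the diagonal $s+t=0$ that contribute to the reduced group $\widetilde{E_0 K_{(p)}}(L)$ are precisely $E_2^{2k(p-1),-2k(p-1)}\cong\Z/p$ for $1\leq k\leq m$. For collapse, observe that any nonzero differential $d_r$ must satisfy $r\equiv 1\pmod{2(p-1)}$, hence $r$ is odd, so any $d_r$ out of an even column lands in an odd column, and the only odd column of $L^{2n-1}$ is $s=2n-1$. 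When the source is $s\geq 2$ with value $\Z/p$, it cannot map nontrivially into the torsion-free target $\Z_{(p)}$; when the source is $s=0$, its generator is a power of the periodicity unit times $1\in E^{0,0}$, which is a permanent cycle by the Leibniz rule; and differentials out of $s=2n-1$ leave the top dimension. Hence $E_\infty=E_2$.

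Now I resolve the extensions. Let $\sigma=\eta-1\in\widetilde K(L)_{(p)}$. Expanding $(1+\sigma)^p=1$ and using $v_p\bigl(\binom{p}{k}\bigr)=1$ for $1\leq k\leq p-1$ yields the identity
\[
p\sigma \;=\; -\,\sigma^p\cdot w(\sigma), \qquad w\in\Z_{(p)}[\![\sigma]\!]^{\times},\quad w(0)=1,
\]
in $\widetilde K(L)_{(p)}$. Let $\zeta\in\widetilde{E_0 K_{(p)}}(L)$ denote the image of $\sigma^{p-1}$ under the projection $K_{(p)}\twoheadrightarrow E_0 K_{(p)}$ coming from the Adams splitting. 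Because $\psi^r$ acts on $\sigma^{p-1}$ with eigenvalue $r^{p-1}\equiv 1\pmod p$ modulo higher AHSS filtration, $\zeta$ has AHSS leading term $x^{p-1}$, where $x\in H^2(L;\Z_{(p)})$ is a generator; by multiplicativity $\zeta^k$ has leading term $x^{k(p-1)}$ generating $E_\infty^{2k(p-1),-2k(p-1)}\cong\Z/p$ for $1\leq k\leq m$, while $\zeta^{m+1}$ has filtration $\geq 2(m+1)(p-1)\geq 2n$ and therefore vanishes. Multiplying the displayed identity by $\sigma^{p-2}$ and projecting to $E_0 K_{(p)}$ gives $p\zeta\equiv -\zeta^2$ modulo higher filtration, and induction yields $p^k\zeta\equiv(-1)^k\zeta^{k+1}$ modulo higher filtration. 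Consequently $p^{m-1}\zeta$ has nonzero leading term $\pm\zeta^m$ while $p^m\zeta=0$; hence $\zeta$ has additive order exactly $p^m$. Comparing filtration pieces shows that $\langle\zeta\rangle$ exhausts $\widetilde{E_0 K_{(p)}}(L)$, which is therefore cyclic of order $p^m$.

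The principal technical obstacle is the extension step: one must check that the projection of $\sigma^{p-1}$ to the Adams summand has the asserted filtration-leading term and that the congruence $p\zeta\equiv -\zeta^2$ survives the projection. This is a matter of compatibility between the multiplicative formal group law of complex $K$-theory (from which $[p]_F(\sigma)=0$ yields the displayed identity) and the spectrum-level Adams decomposition $K_{(p)}=\bigoplus_{\alpha\in\Z/(p-1)}E_\alpha K_{(p)}$, both of which are standard consequences of the material in \cite{MadMilJ} and \cite{Ad4}.
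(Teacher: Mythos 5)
Your proposal is correct, but it takes a genuinely different route from the paper's proof. The paper's argument is a short, abstract sketch: it deduces the collapse of the Atiyah--Hirzebruch spectral sequence for $E_0K_{(p)}(L)$ from the known collapse for $\widetilde{KO}_{(p)}(L)$ (citing Atiyah and Kambe), and it deduces cyclicity not by resolving extensions but by observing that $E_0K_{(p)}(B\Z_p)\cong\widehat{\Z_{(p)}}$ is procyclic (Proposition 5.1), that $\widetilde{KO}(B\Z_p)\to\widetilde{KO}(L)$ is onto, and that $E_0K_{(p)}$ is a direct summand of $KO_{(p)}$; the order $p^m$ then falls out of the $E_\infty$ page. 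You instead prove the collapse directly from parity and sparseness of the coefficients, and you resolve the extension explicitly by tracking the canonical line bundle $\eta$ through the relation $\eta^p=1$. The payoff of your method is that it is self-contained, produces an explicit generator $\zeta$, and does not presuppose the computation of $\widetilde{KO}(B\Z_p)$ or its surjectivity onto $\widetilde{KO}(L)$; the cost is the extra care needed in pushing the identity $p\sigma=-\sigma^p w(\sigma)$ through the projection to the Adams summand. Note, though, that the obstacle you flag is smaller than it may appear: you do not actually need the projection $K_{(p)}\to E_0K_{(p)}$ to be multiplicative. What is used is only that the projection is $\Z_{(p)}$-linear, preserves the AHSS filtration, and is the identity on the filtration-$2k(p-1)$ quotients $H^{2k(p-1)}(L;\Z_{(p)})$ (because $\pi_{2k(p-1)}K_{(p)}$ lies entirely in the $E_0$ summand); together with multiplicativity of the AHSS for $E_0K_{(p)}$ itself, this already forces both $p\zeta$ and $-\zeta^2$ to have leading term $-x^{2(p-1)}$, giving the congruence $p\zeta\equiv-\zeta^2$ modulo higher filtration. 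The eigenvalue language is a reasonable heuristic, but the underlying justification is just this identification of the filtration quotients.
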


\begin{proof}{$(Sketch)$}\quad
The spectral sequence collapses because the analogous spectral
sequence for $\widetilde{KO}_{(p)}(L)$ collapses (see \cite{At} or
\cite{Kam}), and the cyclic nature of the group follows because
$\widetilde{KO}(B\Z_p)\rightarrow KO_{(p)}(L)$ is onto (see
\cite{Kam}) and $E_0K_{(p)}$ is a direct summand of $KO$.
\end{proof}

In contrast to this result, the map from $[B\Z_p,SG]$ to $[L,
SG_{(p)}]$ is not necessarily onto, but we shall show that the
image is a natural direct summand which maps
onto $E_0K_{(p)}(L) \subseteq KO_{(p)}(L)\cong
[L,G/\mathrm{Top}_{(p)}]$
with an easily described kernel,
and the complementary summand of
$[L,SG_{(p)}]$ maps to zero in $[L,G/\mathrm{Top}_{(p)}]$.  The
summands of $SG_{(p)}$ are given by the splitting $SG_{(p)}\simeq
J_p\times \CokJ_p$; results of \cite{May} imply this decomposition
comes from a splitting of infinite loop spaces.

Most of what we need to know about $[B\Z_p,J_p]\rightarrow
[L,J_p]$ is contained in the following results.

\begin{prop}
We have $[B\Z_p,\CokJ_p]=0$, and $[B\Z_p,SG]\cong [B\Z_p,SG_{(p)}]$ is
isomorphic to $[B\Z_p,J_p]$
\end{prop}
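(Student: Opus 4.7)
The plan is to deduce both assertions from the $p$-local splitting $SG_{(p)}\simeq J_p\times\mathrm{Cok}\,J_p$ (as infinite loop spaces, per May) combined with the injectivity statement already established in Proposition 5.2. The heart of the argument is that the $\mathrm{Cok}\,J_p$ factor maps trivially to $G/\mathrm{Top}_{(p)}$, so if it contributed anything to $[B\Z_p,SG_{(p)}]$ it would exhibit a nontrivial kernel of $[B\Z_p,SG]\to[B\Z_p,G/\mathrm{Top}]$ that Proposition 5.2 has ruled out.

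First I would justify the identification $[B\Z_p,SG]\cong[B\Z_p,SG_{(p)}]$. Since the stable stems are finite abelian groups, $SG$ splits as a weak product of its localizations $SG_{(q)}$; and for $q\neq p$ the reduced cohomology of $B\Z_p$ is $p$-primary torsion so that $[B\Z_p,SG_{(q)}]=0$. (Alternatively, this is immediate from the Segal-conjecture identification of $\{B\Z_p,S^0\}\cong\widehat{IA(\Z_p)}$ already invoked in the text, which is $p$-adically complete.) Then, using the splitting, I would write
\[
[B\Z_p,SG_{(p)}]\;\cong\;[B\Z_p,J_p]\,\oplus\,[B\Z_p,\mathrm{Cok}\,J_p],
\]
which is a bona fide additive decomposition because the splitting is one of H-spaces.

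Next I would read off from the homotopy-commutative diagram that opens Section 5 that the composite $SG_{(p)}\to G/\mathrm{Top}_{(p)}$ is homotopic to
\[
J_p\times\mathrm{Cok}\,J_p\;\xrightarrow{\mathrm{pr}_1}\;J_p\;\xrightarrow{\beta}\;BSO_{(p)}\;\simeq\;G/\mathrm{Top}_{(p)},
\]
because the map $\varphi$ appearing there factors through projection onto the $BSO_{(p)}$ factor of $BSO_{(p)}\times\mathrm{Cok}\,J_p\simeq G/O_{(p)}$. Consequently the map $[B\Z_p,SG_{(p)}]\to[B\Z_p,G/\mathrm{Top}_{(p)}]$ kills the summand $[B\Z_p,\mathrm{Cok}\,J_p]$.

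Finally I would invoke the second conclusion of Proposition 5.2, namely that $[B\Z_p,SG]\to[B\Z_p,G/\mathrm{Top}]$ has trivial kernel. Combined with the vanishing on $[B\Z_p,\mathrm{Cok}\,J_p]$ established in the preceding step, this forces $[B\Z_p,\mathrm{Cok}\,J_p]=0$, which gives the first assertion. The second assertion is then immediate from the splitting, since the $J_p$-summand is all that survives. The only real subtlety, and the step that requires care, is checking that the factorization really does land in the projection onto $J_p$ rather than merely in some larger subspace — but this is exactly what Diagram in Section 5 together with the definition of $\varphi$ records.
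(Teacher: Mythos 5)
Your argument is essentially the paper's own: both proofs exploit the splitting $SG_{(p)}\simeq J_p\times\CokJ_p$, the nullhomotopy of $\CokJ_p\to G/\mathrm{Top}_{(p)}$, and the injectivity of $[B\Z_p,SG]\to[B\Z_p,G/\mathrm{Top}]$ from Proposition 5.2 to force $[B\Z_p,\CokJ_p]=0$. The only cosmetic difference is that you spell out the identification $[B\Z_p,SG]\cong[B\Z_p,SG_{(p)}]$ and phrase the conclusion via a direct-sum decomposition rather than via a split injection onto $[B\Z_p,J_p]$, but the content is the same.
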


\begin{proof}{$(Sketch)$}\quad
Propositions 4.1 and 4.2 imply that the composite
\[
\begin{CD}
[B\Z_p,J_p]\times[B\Z_p,\CokJ_p]  @>>>
[B\Z_p,SG_{(p)}]\times[B\Z_p,SG_{(p)}]\\
@|  @VVV  @.\\
[B\Z_p,SG_{(p)}] @.
[B\Z_p,G/\mathrm{Top}_{(p)}] \times [B\Z_p,G/\mathrm{Top}_{(p)}] @>\oplus >>
[B\Z_p,G/\mathrm{Top}_{(p)}]
\end{CD}
\]

\noindent
in which the first horizontal arrow is inclusion, is split
injective, and since the composite $\CokJ_p\rightarrow SG_{(p)}
\rightarrow G/\mathrm{Top}_{(p)}$ is nullhomotopic the displayed
composite can be rewritten more simply as
\[
\begin{CD}
[B\Z_p,SG_{(p)}] @>\mathrm{proj}>> [B\Z_p,J_p] @>>>
[B\Z_p,G/\mathrm{Top}_{(p)}]
\end{CD}
\]
It follows that the projection map induces a split injection from
$[B\Z_p,SG_{(p)}]$ to $[B\Z_p,J_p]$.  Since the projection is onto
by construction, it follows that the map
$[B\Z_p,SG_{(p)}]\rightarrow [B\Z_p, J_p]$ is an isomorphism,
proving one assertion in the proposition.  To see that $[B\Z_p,\CokJ_p]=0$,
notice that if this group were nonzero then the projection
$[B\Z_p,SG_{(p)}]\rightarrow [B\Z_p,J_p]$ would not be injective.
\end{proof}

We are now ready to analyze objects like $[L,SG_{(p)}]$ and its
summands where $L$ is a lens space as above.

\begin{prop}
Let $L$ be a $(2n-1)$-dimensional $\Z_p$ lens space, let $\eta
:L\rightarrow B\Z_p$ denote its classifying map and let $q:
L\rightarrow S^{2n-1}$ be a map of degree $1$ (which is unique up to
homotopy).  Then $[L,J_p]$ is the sum of the image of $\eta^*:
[B\Z_p,J_p]\rightarrow [L,J_p]$ and $q^*:
\pi_{2n-1}(J_p)\rightarrow [L,J_p]$.  The image of $\eta^*$ is
cyclic of order $p^m$, where $m=\left[\frac{n}{p-1}\right]$, the map
$q^*$ is injective and the structures of $[L,J_p]$ and the map
$[L,J_p]\rightarrow [L,G/\mathrm{Top}]\cong \widetilde{KO}(L)$ are
given as follows:
\begin{itemize}
\item[(i)] Suppose that $n\not\equiv 0$\ \ mod $p-1$, so that
$\pi_{2n-1}(J_p)=0$.  Then $[B\Z_p,J_p]\rightarrow [L,J_p]$ is
onto and $[L,J_p]\rightarrow [L,G/\mathrm{Top}_{(p)}]\cong
\widetilde{KO}(L)$ is split injective with image corresponding to
$E_0K_{(p)}(L)$.  Furthermore, the latter also equals the image of
$[L,SG_{(p)}]\rightarrow [L,G/\mathrm{Top}_{(p)}]$, and this map
is a split injection.

\item[(ii)] Suppose that $n=p^\nu(p-1)r$ where $r$ is prime to
$p$, so that $\pi_{2n-1}(J_p)\cong \Z_{p^\nu}$.  Then the images
of $\eta^*$ and $q^*$ intersect in a subgroup of order $p$, the
image of $q^*$ is the kernel of the map $[L,SG_{(p)}]\rightarrow
[L,G/\mathrm{Top}_{(p)}]$ and the image of the latter is given by
$E_0K_{(p)}(L)$.  Furthermore, the latter is also equal to the
images ot $[L,J_p]\rightarrow [L,G/\mathrm{Top}_{(p)}]$ and the
kernel of this map has order $p$.
\end{itemize}
\end{prop}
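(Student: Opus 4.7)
The plan is to analyze $[L,J_p]$ via the top-cell cofibration of $L$, combined with the $p$-local identifications of $[B\Z_p,J_p]$ already established in Propositions 5.1, 5.2, and 5.4. Since the standard minimal CW structure makes $L$ the $(2n-1)$-skeleton of $B\Z_p$, we have $L^{(2n-2)}=B\Z_p^{(2n-2)}$, and the cofibration
$$L^{(2n-2)}\xrightarrow{\iota}L\xrightarrow{q}S^{2n-1}$$
applied to the infinite loop space $J_p$ yields the long exact sequence
$$\pi_{2n}(J_p)\to\pi_{2n-1}(J_p)\xrightarrow{q^*}[L,J_p]\xrightarrow{\iota^*}[L^{(2n-2)},J_p]\to\pi_{2n-2}(J_p).$$
A direct computation from the defining fibration $J_p\to BSO_{(p)}\xrightarrow{\psi^r-1}BSO_{(p)}$ shows that $\pi_k(J_p)=0$ for every even $k>0$: the groups $\pi_k(BSO)_{(p)}$ are concentrated in dimensions $k\equiv 0\pmod 4$, and on those free parts $\psi^r-1$ acts by a unit of $\Z_{(p)}$ unless $(p-1)\mid k/2$. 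Hence $\pi_{2n}(J_p)=\pi_{2n-2}(J_p)=0$, which yields the injectivity of $q^*$ and the short exact sequence
$$0\to\pi_{2n-1}(J_p)\xrightarrow{q^*}[L,J_p]\xrightarrow{\iota^*}[L^{(2n-2)},J_p]\to 0.$$

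The next step is to establish $[L,J_p]=\mathrm{Im}(\eta^*)+\mathrm{Im}(q^*)$, which by the short exact sequence reduces to surjectivity of $\iota^*\tinycirc\eta^*\colon[B\Z_p,J_p]\to[L^{(2n-2)},J_p]$. Since $\eta\tinycirc\iota$ is precisely the skeletal inclusion $B\Z_p^{(2n-2)}\hookrightarrow B\Z_p$, this composite is the skeletal restriction, and I plan to establish surjectivity by running the argument of Proposition 5.2 with $L^{(2n-2)}$ in place of $B\Z_p$: both $[L^{(2n-2)},J_p]$ and $E_0K_{(p)}(L^{(2n-2)})$ are cyclic $p$-groups hit surjectively by the restriction from $\widehat{\Z_{(p)}}\cong[B\Z_p,J_p]$ (using the surjectivity of $E_0K_{(p)}(B\Z_p)\to E_0K_{(p)}(L^{(2n-2)})$ from Kambe), so an order count will force the restriction into $[L^{(2n-2)},J_p]$ to be surjective. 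Moreover, $E_0K_{(p)}(L^{(2n-2)})=E_0K_{(p)}(L)$, since the coefficients of $E_0K_{(p)}$ are concentrated in degrees divisible by $2(p-1)$ and the top cell of $L$ has odd dimension $2n-1$. For compatibility with $G/\mathrm{Top}_{(p)}$, note that $[S^{2n-1},G/\mathrm{Top}_{(p)}]\cong\pi_{2n-1}(BSO_{(p)})=0$, so $\mathrm{Im}(q^*)$ lies in the kernel of $[L,J_p]\to[L,G/\mathrm{Top}_{(p)}]\cong\widetilde{KO}(L)_{(p)}$; pulling Proposition 5.2 back along $\eta$ identifies this image precisely as $E_0K_{(p)}(L)$, cyclic of order $p^{[(n-1)/(p-1)]}$ by Proposition 5.3.

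Case (i), when $(p-1)\nmid n$, follows immediately: $\pi_{2n-1}(J_p)=0$ collapses the short exact sequence to $[L,J_p]\cong E_0K_{(p)}(L)$, so $\eta^*$ is onto, the map to $G/\mathrm{Top}_{(p)}$ is split injective with image $E_0K_{(p)}(L)$, and the equality $[n/(p-1)]=[(n-1)/(p-1)]$ yields the stated order $p^m$. The main obstacle is case (ii): verifying that $\mathrm{Im}(\eta^*)\cap\mathrm{Im}(q^*)$ has order exactly $p$, neither the full $\pi_{2n-1}(J_p)$ nor trivial. My plan is to exploit the cyclicity of $[B\Z_p,J_p]\cong\widehat{\Z_{(p)}}$ (whose generator corresponds to the $p$-local reduction of the classical stable transfer $B\Z_p\to S^0$) and to trace this generator through the composite $\widehat{\Z_{(p)}}\to[L,J_p]\twoheadrightarrow E_0K_{(p)}(L)\cong\Z/p^{m-1}$, which is reduction modulo $p^{m-1}$: by a lifting-the-exponent argument using that $r$ is primitive modulo $p^2$, together with an Adams $e$-invariant computation, the element $p^{m-1}$ times a generator maps via $\eta^*$ to a class of order exactly $p$ in $\pi_{2n-1}(J_p)\subseteq[L,J_p]$. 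This forces $|\mathrm{Im}(\eta^*)|=p^m$ and identifies $\mathrm{Im}(\eta^*)\cap\mathrm{Im}(q^*)$ as the unique order-$p$ subgroup. The remaining assertions about $[L,SG_{(p)}]$ and the kernel of $[L,SG_{(p)}]\to[L,G/\mathrm{Top}_{(p)}]$ then follow from the splitting $SG_{(p)}\simeq J_p\times\CokJ_p$ of Proposition 5.4 together with the vanishing of the composite $\CokJ_p\to SG_{(p)}\to G/\mathrm{Top}_{(p)}$.
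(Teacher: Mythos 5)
Your overall line of attack (a skeletal cofiber sequence for $L$ plus the $B\Z_p$ computations from Propositions 5.1--5.4 and the vanishing of even $\pi_*(J_p)$) is in the same spirit as the paper's, but the argument has two genuine gaps.

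First, the long exact sequence you write for the cofibration $L^{(2n-2)}\xrightarrow{\iota}L\xrightarrow{q}S^{2n-1}$ is not correct. Applying $[-,J_p]$ to the Barratt--Puppe sequence gives
$$
[\Sigma L^{(2n-2)},J_p]\xrightarrow{\delta}[S^{2n-1},J_p]\xrightarrow{q^*}[L,J_p]\xrightarrow{\iota^*}[L^{(2n-2)},J_p],
$$
so the term preceding $\pi_{2n-1}(J_p)$ is $[\Sigma L^{(2n-2)},J_p]$, not $\pi_{2n}(J_p)$; there is also no term $\pi_{2n-2}(J_p)$ on the right. Since $\Sigma L^{(2n-2)}$ is a $(2n-1)$-dimensional complex with a free top homology class, $[\Sigma L^{(2n-2)},J_p]$ is generally nonzero even when $\pi_{2n}(J_p)=0$, and the coboundary $\delta$ is essentially composition with the (suspension of the) attaching map of the top cell of $L$ --- there is no cheap reason for it to vanish. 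So the injectivity of $q^*$, and hence your short exact sequence, is not established by $\pi_{2n}(J_p)=\pi_{2n-2}(J_p)=0$. (Surjectivity of $\iota^*$ does follow from $\pi_{2n-2}(J_p)=0$ by obstruction theory, but that is a separate point.) The paper handles this by bringing in the mapping cone $\widehat L$ of the covering $\lambda: S^{2n-1}\to L$ (i.e., the $2n$-skeleton of $B\Z_p$), and using the collapsing Atiyah--Hirzebruch spectral sequences for $[B\Z_p,J_p]$, $[\widehat L,J_p]$, $[L,J_p]$, and $[L_0,J_p]$ to pin down all the orders; injectivity of $q^*$ and the intersection order in (ii) then fall out of counting. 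Your proof never introduces $\widehat L$ and loses the information it carries.

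Second, your treatment of case (ii) is not a proof but an outline of an approach that appears blocked. You propose to show $\eta^*(p^{m-1}\zeta)$ is a class of order exactly $p$ in $\mathrm{Im}(q^*)$ by a ``lifting-the-exponent argument'' and an ``Adams $e$-invariant computation,'' but you do not specify the argument, and the most natural way to bring an $e$-invariant to bear --- restricting along $\lambda:S^{2n-1}\to L$ --- gives nothing, since $\eta\tinycirc\lambda$ is nullhomotopic, so $\lambda^*\eta^*=0$. What is actually needed is the fact that $\mathrm{Im}(\eta^*)\cong\Z_{p^m}$ (so that $\eta^*\zeta$ really does have order $p^m$); the paper gets this directly from the collapsing AHSS for $[\widehat L,J_p]$ and the injection $[\widehat L,J_p]\hookrightarrow[L,J_p]$ furnished by $\pi_{2n}(J_p)=0$ in the Barratt--Puppe sequence for $\lambda$. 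With that in hand, the intersection has order $|\mathrm{Im}(\eta^*)|/|[L_0,J_p]|=p^m/p^{m-1}=p$ since $\mathrm{Im}(\eta^*)$ surjects onto $[L_0,J_p]$ and $\ker(\iota^*)=\mathrm{Im}(q^*)$. You should replace the $e$-invariant sketch with this order-counting argument, and you will need the $\widehat L$ computation to supply the crucial input that you currently do not have.
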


\textbf{Note:} Since the group $[L,\CokJ_p]$ is usually
nontrivial, the map $[B\Z_p,SG_{(p)}]\rightarrow [L,SG_{(p)}]$ is
usually not onto; furthermore, since the homotopy groups of $\CokJ_p$
are given by largely unknown factors in the stable homotopy
groups of spheres, the groups $[L,\CokJ_p]$ are usually not easy
to describe explicitly.  This leads to major complications in
studying the notion of smooth tangential thickness for lens
spaces.

\textbf{Notation}: Given a CW complex $X$ and an arcwise connected
space $Y$, the {\it skeletal filtration} of the set of homotopy classes
$[X,Y]$ is the family of subsets
$$
\mathbf{XF}_k([X,Y])= \{u\in [X,Y]\ :\ u\,|\,{X_k}\hbox{ is
homotopically trivial}\}~.
$$
Clearly if $f: X'\rightarrow X$ is a cellular map, then the map
$f^*: [X,Y]\rightarrow [X',Y]$ is also filtration preserving.
Similarly, if $g: Y\rightarrow Y'$ is continuous, then $g_*:
[X,Y]\rightarrow [X,Y']$ is filtration preserving.  If $Y$ is a
double loop space, set
$$
\mathbf{FF}_k([X,Y]) = \mathbf{XF}_k([X,Y])\smallsetminus
\mathbf{XF}_{k-1}([X,Y])
$$
and note that $(i)$ this has a natural abelian group structure and
$(ii)$ $\mathbf{FF}_k([X,\Omega^2W])$ is functorial in the second
variable $W$.

\begin{proof}
The first step is an anlysis of the Atiyah-Hirzebruch spectral
sequence for $[B\Z_p,J_p]$ and $[L,J_p]$.

\textbf{Claim:} \textit{The Atiyah-Hirzebruch spectral sequence
for $[B\Z_p,J_p]$ collapses.}

\begin{proof}
The relevant $E_2$ terms are given by
$\widetilde{H}^i(B\Z_p;\pi_i(J_p))$; these groups are isomorphic
to $\Z_p$ if $i=2k(p-1)-1$ for some integer $k$ and zero
otherwise.  We also know that $[B\Z_p,J_p]$ maps isomorphically to
$E_0K_{(p)}(B\Z_p)$, and the collapsing Atiyah-Hirzebruch spectral
sequence for the latter has $E_2$ terms given by
$\widetilde{H}^i(B\Z_p;E_0K_{(p)}(S^i))$, which are isomorphic to
$\Z_p$ if $i\equiv 0$  mod $2(p-1)$ and zero otherwise.  It is a
fairly straightforward exercise to check that the bijectivity of
$[B\Z_p,J_p]\rightarrow E_0K_{(p)}(B\Z_p)$ implies that the
spectral sequence for $[B\Z_p,J_p]$ must also collapse.
\end{proof}

Next, we shall use the naturality properties of the
Atiyah-Hirzebruch spectral sequence to analyze $[L,J_p]$ and
related objects.  Let $\lambda: S^{2n-1}\rightarrow L$ be the
universal covering projection; then the mapping cone $\widehat{L}$
of $\lambda$ can be viewed as the $2n$-skeleton of $B\Z_p$ and the
restriction $H^*(B\Z_p)\rightarrow H^*(\widehat{L})$ is an
isomorphism in dimensions $\leq 2n$ for all coefficients.
Therefore a naturality argument implies that the Atiyah-Hirzebruch
spectral sequence for $[\widehat{L},J_p]$ collapses and the
restriction $\widehat{\Z}_{(p)}\cong [B\Z_p,J_p]\rightarrow
[\widehat{L},J_p]$ is onto with image $\Z_{p^m}$ where
$m=\left[\frac{n}{p-1}\right]$.  Since $\pi_{2n}(J_p)=0$, the
Barratt-Puppe exact sequence associated to
\[
\begin{CD}
S^{2n-1} @>\lambda>> L @>>> \widehat{L} @>>> S^{2n} @>>> \cdots
\end{CD}
\]
implies that the restriction map $[\widehat{L},J_p]\rightarrow
[L,J_p]$ is injective, and hence the image of $[B\Z_p,J_p]$ in
$[L,J_p]$ is also cyclic of order $p^m$ (where $m$ is as given
above).  To describe the entire group $[L,J_p]$, let $L_0 =
L\smallsetminus\mathrm{Int}\ D$ where $D$ is a smoothly embedded closed
$(2n-1)$-disk; then $L_0$ may be viewed as a $(2n-2)$-skeleton for
$B\Z_p$ and $H^*(B\Z_p)\rightarrow H^*(L_0)$ is an isomorphism in
dimensions $\leq 2n-2$.  As before, it follows that
$[B\Z_p,J_p]\rightarrow [L_0,J_p]$ is onto.  If we consider the
exact sequence for the Barratt-Puppe sequence
\[
\begin{CD}
S^{2n-2}~~\subseteq~~ L_0~~ \subseteq L @>q>> S^{2n-1}
\end{CD}
\]
we see that if $y\in [L,J_p]$, then the restriction of $y$
to $[L_0,J_p]$ is the image of some class $z\in[B\Z_p,J_p]$ and
therefore $y-\eta^*z\in[L,J_p]$ must lie in the image of $q^*$.
Thus the images of $q^*$ and $\eta^*$ generate $[L,J_p]$.  To see
that $q^*$ is injective, we begin by noting that
$\pi_{2n-1}(J_p)=0$ unless $n\equiv 0$ mod $p-1$, and if
$n=p^\nu(p-1)r$, where $\nu\geq 1$ and $r$ is prime to $p$,
then $\pi_{2n-1}(J_p)$ is cyclic of order $p^\nu$ (see \cite{Ad1}).  If
$n\equiv 0$ mod $p-1$, then our computations of $[L,J_p]$ and
$[\widehat{L},J_p]$ show that $q^*$ maps a generator for the
$p$-torsion in $\pi_{2n-1}(J_p)\cong \Z_{p^\nu}$ to a class of
order $p$ in the image of $\eta^*: [B\Z_p,J_p]\rightarrow
[L,J_p]$.  In particular, $q^*$ maps the $p$-torsion injectively,
and hence it must map the entire cyclic $p$-group
$\pi_{2n-1}(J_p)$ injectively.  Observe that if $n\equiv 0$ mod
$p-1$, the preceding argument and skeletal filtration
considerations show that the intersection of Im $\eta^*$ and Im
$q^*$ is a cyclic subgroup of order $p$.

We must now describe the image of $[L,J_p]$ in
$[L,G/\mathrm{Top}_{(p)}]$.  First of all, we claim that the map
$[L,J_p]\rightarrow [L,G/\mathrm{Top}_{(p)}]$ is trivial on the
image of $q^*$.  More or less by construction, the composite maps
of homotopy groups $\pi_*(SO)\rightarrow \pi_*(SG)\rightarrow
\pi_*(J_p)$ are onto in all dimensions, and since the composite in
the commutative diagram
\[
\begin{CD}
SO @>>> SG_{(p)} @>>> G/O_{(p)}\\
@. @VVV @VVV\\
@. J_p @>>> G/\mathrm{Top}_{(p)}
\end{CD}
\]
is homotopically trivial, and therefore the composite
\[
\begin{CD}
\pi_{2n-1}(SO)_{(p)} @>\mathrm{onto}>> \pi_{2n-1}(J_p) @>>>
\pi_{2n-1}(G/\mathrm{Top}_{(p)})\\
@. @VV q^* V @VV q^* V\\
@. [L,J_p] @>>> [L,G/\mathrm{Top}_{(p)}]
\end{CD}
\]
must be zero.

The final step is to check that the morphism from the cyclic
$p$-group $[L,J_p]$ to the abelian $p$-group
$[L,G/\mathrm{Top}_{(p)}]$ maps onto the summand $E_0K_{(p)}(L)$
and the kernel is precisely the image of $q^*$.  Is is convenient
to split the discussion into two cases, depending on whether or
not $n\equiv 0$ mod $p-1$.  In both cases the argument uses the
commutative diagram
\[
\begin{CD}
[L,J_p] @>>> [L,G/\mathrm{Top}_{(p)}]\\
@VVV @VVV\\
[L_0,J_p] @>>> [L_0,G/\mathrm{Top}_{(p)}]
\end{CD}
\]
in which the vertical arrow on the right is an isomorphism by
standard results on $\widetilde{KO}(L)$ and $\widetilde{KO}(L_0)$
which follow from the collapsing of their respective Atiyah-Hirzebruch
spectral sequences.

\textit{Case} $(i)$:  If $\not\equiv 0$ mod $p-1$, then the
restriction map from $[L,J_p]$ to $[L_0,J_p]$ is an isomorphism,
and the restrictions $[B\Z_p,J_p]\rightarrow [L_0,J_p]$ and
$[B\Z_p,G/\mathrm{Top}_{(p)}]\rightarrow
[L_0,G/\mathrm{Top}_{(p)}]$ are onto with isomorphic images.  A
diagram chase now shows that $[L_0,J_p]\rightarrow
[L_0,G/\mathrm{Top}_{(p)}]$ is a split injection whose image is
$E_0K_{(p)}(L)$, which is what we wanted to prove.

\textit{Case} $(ii)$:  If $n\equiv 0$ mod $p-1$, then the kernel
of the restriction map from $[L,J_p]$ to $[L_0,J_p]$ is the image
of $q^*$, and the kernel of the map from Im $\beta^*$ to
$[L_0,J_p]$ has order $p$.  As in the preceding case, the map
$[L_0,J_p]\rightarrow [L_0,G/\mathrm{Top}_{(p)}]$ is an
isomorphism, so the conclusion in this case also follows from a
diagram chase.

To see the statements about the images of $[L,SG_{(p)}]$ and
$[L,J_p]$ in $[L,G/\mathrm{Top}_{(p)}]$, note that these images
are equal by the splitting $SG_{(p)}\simeq J_p\times \CokJ_p$ and
the homotopic triviality of $\CokJ_p\rightarrow
G/\mathrm{Top}_{(p)}$.
\end{proof}

\section{Weak Rigidity for Self-Equivalences of Lens Spaces}

We have defined tangential thickness for elements of the
surgery structure set of a manifold $M$, but sometimes it is
more desirable to have a similar notion of tangential thickness
for manifolds without the additional data coming from a
homotopy equivalence $f:N\to M$.  This is not always feasible,
but in some cases a simple condition on $M$ leads to
results which can be stated fairly simply.

\textbf{Definition. }
Let CAT be the category of smooth, piecewise linear or
topological manifolds.  A compact unbounded CAT-manifold $M$ is
said to be {\sl weakly tangentially rigid in } 
CAT if every tangential
homotopy self-equivalence $h:M\to M$ in CAT is normally cobordant to the
identity in the CAT-structure set of $M$; recall that a homotopy 
equivalence $f:N\to M$ of CAT manifolds is said to be tangential in CAT 
provided the pullback of the stable CAT tangent bundle of $M$ is 
CAT isomorphic to the stable CAT tangent bundle 
in $M$.  

\begin{prop}\label{prop:welldef}
Let $M$ be a weakly tangentially rigid closed manifold in the
category of topological manifolds,
let $f,f':N\to  M$ be tangential homotopy equivalences, and let
$k\geq 3$ be a positive integer.  Then $(N,f)$ determines a manifold
structure which lies in $\TT^{\mathrm{Top}}_k(M)$ if and only if
$(N,f')$ does.
\end{prop}

\begin{proof}
Suppose that $h:M\to M$ is a tangential homotopy self-equivalence,
so that $h$ is normally cobordant to the identity by the
weak tangentially rigidity assumption.
Since $k\geq 3$ the $\pi-\pi$ Theorem
implies that $h\times\mathrm{Id}(D^k):M\times D^k\to M\times D^k$
is properly homotopic (as a map of manifold pairs) to a
homeomorphism, and if we restrict everything to the interiors
of the relevant manifolds we see that
$f\times\mathrm{Id}(\R^k):M\times \R^k\to M\times \R^k$
is also properly homotopic to a homeomorphism which we shall
call $H_k$.

Suppose now that
$f,f':N\to  M$ are tangential homotopy equivalences.
By the symmetry of the conclusion it suffices to prove the result
when $(N,f)$ determines a manifold structure in
$\TT^{\mathrm{Top}}_k(M)$.

If $g$ is a homotopy inverse to $f'$ and $h=f'\tinycirc g$,
then $h$ is a tangential homotopy self-equivalence of $M$ and
$f'$ is homotopic to $h\tinycirc f$.  By the weak tangential rigidity
assumption, we know that $h\times\mathrm{Id}(\R^k)$ is homotopic to a
homeomorphism $H_k$ as described above. By the assumption on $(N,f)$
there is a homeomorphism
$F:N\times\R^k\to M\times\R^k$ which homotopically corresponds to
$f$ under the canonical homotopy equivalences $X\simeq X\times\R^k$.
It now follows that $H_k\tinycirc F$
is a homeomorphism
$N\times\R^k\to M\times\R^k$ which homotopically corresponds to
$f'$ under the canonical homotopy equivalences $X\simeq X\times\R^k$,
and this means that $(N,f')$ determines a class in
$\TT^{\mathrm{Top}}_k(M)$.
\end{proof}

The next result shows that everything in this section applies to
lens spaces.

\begin{prop}\label{prop:rigid}
Let $M^{2n-1}$ be a lens space with fundamental group $\Z_p$,
where $p$ is an odd prime, and let {\rm Top} be the topological 
manifold category. Then
every {\rm Top} tangential homotopy self-equivalence of $M^{2n-1}$ is
{\rm Top} normally cobordant to the identity.
\end{prop}

There are lens spaces which have tangential homotopy
self-equivalences that are not homotopic to the identity.  For
example, if $p\equiv 1$ mod 4 is a prime then the simple lens
space $L^3(p;1,1)$ admits a tangential homotopy self-equivalence
which is not homotopic to a homeomorphism and induces multiplication
by $v$ on the fundamental group, where $v^2\equiv~-1$ mod $p$ (this
follows from \cite{Rueff} and Reidemeister torsion considerations
as in Section 12 of \cite{Milnor}).

\begin{proof}
Let $f:M\to M$ be a tangential homotopy self-equivalence of $M$,
and let $\eta_T(f)\in [M,G/\mathrm{Top}]_{(p)}$ denote its normal
invariant; since $f$ is a tangential homotopy equivalence this
normal invariant lies in the image of $[M,SG]_{(p)}$.  We need
to show that the image $\eta_T(f)$ of this class in
$[M,G/\mathrm{Top}]_{(p)}$ is trivial.

By Proposition \ref{prop42} we know that $\eta_T(f)$ lies in
the direct summand $E_0 K_{(p)}(M)$ of the group
$[M,G/\mathrm{Top}]_{(p)}$.
We claim that $f$ induces the identity map on this summand, and
we shall do this using the standard embedding of the summand in
$\widetilde{K}_{(p)}(M)$.
Suppose now that the homomorphism $f_*$ induced by $f$ on
$\pi_1(M)\cong\Z_p$ is multiplication by $u$, where $u\in\Z$
is prime to $p$.

The $K$-groups of $\Z_p$ lens spaces were computed by T. Kambe in
\cite{Kam}; for
our purposes, one important aspect of the computations is
that the cannoical map from $R(\Z_p)$ to $K(M)$, which sends a
representation $V$ to the vector bundle $\widetilde{M}\times_{\Z_p}
V$, is onto.  Using this
and the condition $f_*(y)=u\,y$ in the preceding paragraph,
one can prove directly that the induced automorphism $f^*$ of
$\widetilde{K}_{(p)}(M)$ is given by the Adams operation
$\psi^u$.  The restriction of this operation to the direct summand
$E_0 K_{(p)}(M)$ is the identity, and therefore we have
shown that $f^*\eta_T(f)=\eta_T(f)$.  Of course, we also have
a similar equation in which $f^*$ is replaced by its inverse.

Now let $\varphi$ be an arbitrary homotopy self-equivalence
of $M$.  Then the standard formaula for the normal invariant
of a composite implies that
$$\eta_T(\varphi\tinycirc f)~~=~~
\eta_T(\varphi)~+~(f^*)^{-1}\eta_T(f)~~=~~
\eta_T(\varphi)~+~\eta_T(f)$$
where the first equation is true by general considerations
(see the displayed formula on page 143 of \cite{inertia})
and the second follows by the last sentence of the preceding
paragraph.  Therefore, if $h_j$ is $j$-fold composite of $f$
with itself, then we have $\eta_T(h_j)=j\,\eta_T(f)$.  Consider
the special case $j=p-1$.  By hypothesis, the automorphism of fundamental
groups induced by $h_{p-1}$ is multiplication by $u^{p-1}$, which
is congruent to 1 mod $p$, and hence $h_{p-1}$ induces the
identity on $\pi_1(M)\cong\Z_p$.  This means that the associated map of
universal covering spaces ${\widetilde{h_{p-1}}}$ is an equivariant
homotopy self-equivalence of the sphere ${\widetilde{M}}$, and since
the degree of an equivariant self-map of the latter is congruent to
1 mod $p$ it follows that the degree of ${\widetilde{h_{p-1}}}$ must
be equal to 1.  Since equivariant self-maps of spheres are classified
up to equivariant homotopy by their degrees, this means that
${\widetilde{h_{p-1}}}$ is equivariantly homotopic to the identity,
which in turn implies that $h_{p-1}$ is homotopic to the identity.

The preceding sentence implies that $0=\eta_T(h_{p-1})$, and therefore
$\eta_T(h_{p-1})= (p-1)\eta_T(f)$ implies that the expression on
the right hand side of the latter equation is zero.  Since
$[M,G/\mathrm{Top}]_{(p)}$ is a finite abelian
$p$-group, it follows that $\eta_T(f)$ must also be equal to zero.
\end{proof}

\centerline{\it Tangential thickness and lens spaces}

One can also consider further specializations of the tangential
thickness question.  In particular, the case if homotopically
equivalent (genuine) lens spaces touches on interesting points in
several directions that we shall describe briefly.

It was shown in \cite{KwaSch3}  that if
$M$ and $N$ are linear space forms such that $M\times\mathbb{R}^2$
is homeomorphic to $N\times\mathbb{R}^2$, then $M$ and $N$ are
diffeomorphic.  On the other hand, we have the following result:

\begin{prop}
Let $f: M^{2n-1}\rightarrow N^{2n-1}$ be a tangential homotopy
equivalence of lens spaces whose fundamental groups have order
$p$, where $p$ is an odd prime and $n\leq p-1$.  Then
$M\times\R^3$ and $N\times\R^3$ are diffeomorphic.
\end{prop}

\textbf{Remark.} Techniques of S. Cappell and J. Shaneson in
\cite{CaSh} imply that a result analogous to Theorem 1 remains
true for $\Z_{2^r}$ lens spaces.

Proposition 5.3 follows because the normal invariant lies in
the image of the homomorphism
\[
[M,SG]_{(p)}~\longrightarrow~[M,G/O]_{(p)}
\]
and this mapping is trivial; the triviality of the normal
invariant and the $\pi-\pi$ Theorem then
imply that $f\times\mathrm{Id}(\R^3)$ is homotopic to a
diffeomorphism.

\textbf{Examples. } A closer examination of results due to J. Ewing, S.
Moolgavkar, R. Stong and L. Smith \cite{EMSS} shows that for each
$n\geq 2$ there are infinitely many primes $p$ for which one
has homotopy equivalent but not diffeomorphic lens spaces that
are stably parallelizable.  Thus for each $n\geq 2$ there are many
examples of nonhomeomorphic lens spaces $M^{2n-1}$ and $N^{2n-1}$
such that $M\times\R^3$ and $N\times\R^3$ are diffeomorphic.

It is natural to ask whether Proposition 5.3 extends to higher
values of $n$, and this question is directly
related to a remarkable theorem of J. Folkman \cite{Fo}:

\textbf{Theorem (Folkman):}  \textit{Let $p$ be an odd prime, and
assume that $n\geq 2p+1$. If
two $2n-1$-dimensional lens spaces with fundamental group
$\mathbb{Z}_p$ have the same tangential homotopy type, then they must
actually be isometric (diffeomorphic).}

One immediate question is whether the
tangential homotopy self-equivalences in the preceding theorem
are (homotopic to) diffeomorphisms.  Note that the 3-dimensional
examples preceding Proposition 5.2 lie below the dimension range
in which Folkman's result applies;  it seems highly plausible that
there are also tangential homotopy equivalences in higher dimensions
which are not homotopic to homeomorphisms.


\section{Desuspension Results}

If $X$ is a connected finite complex, it is well known that the
standard ``loop sum with identity" bijection from $\{X,S^0\}$ to
$[X,SG]$ is not necessarily a homomorphism with respect to the
loop sum structure on the domain and the composition/direct sum
structure on the codomain; specifically, if we view $\{X,S^0\}$ as
a ring using the standard smash product ring spectrum structure on
the spectrum for $S^0$, then the composition/direct sum structure
is given by:
$$
\mathrm{``}a~\oplus~b\mathrm{''}~~=~~
a~+~b~+~a\,\wedge\,b~~=~~a\,\tinycirc\, b
$$
(\textit{i.e.}, the Perlis circle operationfor the stable
cohomotopy ring; see \cite{Kap},
p. 81, line 4, or \cite{PolSeh}, Section 9.4, p. 298).
Fortunately, one can often show that these two
algebraic structures are similar in key respects (for example,
they are equal if $X$ is a suspension \cite{Whi}, pp. 124-125).
In particular, we have the following:

\begin{prop}
Let $r$ be an arbitrary positive integer, and let $\sigma:
\{B\Z_p,S^0\}\rightarrow [B\Z_p,SG]$ be the standard set-theoretic
isomorphism such that $\sigma^{-1}(u\oplus v) = u+v+uv$.  Then
$y\in \{B\Z_p,S^0\}$ is divisible by $p^r$ with respect to the
loop sum operation if and only if $\sigma{y}$ is divisible by
$p^r$ with respect to the composition or direct sum (or circle)
operation.
\end{prop}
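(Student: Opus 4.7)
The plan is to identify $\{B\Z_p, S^0\}$ with the completion of the augmentation ideal of the Burnside ring via the Segal conjecture, and then to recast the statement as a comparison of divisibility in the additive and multiplicative groups of the $p$-adic integers, where the $p$-adic logarithm (valid for odd $p$) provides the main tool.

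First I would invoke the Segal conjecture for $\Z_p$ (referenced in the paragraph following Proposition 5.1) to identify the ring $\{B\Z_p, S^0\}$, equipped with loop sum as addition and smash product as multiplication, with the completion $\widehat{IA(\Z_p)}$ of the augmentation ideal of the Burnside ring $A(\Z_p)$. Letting $\tau = [\Z_p/e]$, the relation $\tau^2 = p\tau$ (coming from the decomposition of $\Z_p \times \Z_p$ with diagonal action into $p$ free orbits) shows that the generator $y = \tau - p$ of $IA(\Z_p)$ satisfies $y^2 = -py$. Hence $\widehat{IA(\Z_p)}$ is free of rank one over $\widehat{\Z_{(p)}}$ on $y$, and under the additive identification $\{B\Z_p, S^0\} \cong \widehat{\Z_{(p)}}$ given by $ay \leftrightarrow a$, the ring multiplication transfers to the $p$-divisible bilinear form $(a,b) \mapsto -pab$.

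Next, using $\sigma^{-1}(u \oplus v) = u + v + uv$, the composition/direct sum/circle operation on $[B\Z_p,SG]$ corresponds on $\widehat{\Z_{(p)}}$ to $a \circ b = a + b - pab$. The substitution $\phi(a) = 1-pa$ gives a bijection $\phi : (\widehat{\Z_{(p)}}, \circ) \to (1 + p\widehat{\Z_{(p)}}, \cdot)$, and a direct computation yields
\[
\phi(a \circ b) \;=\; 1 - p(a+b-pab) \;=\; (1-pa)(1-pb) \;=\; \phi(a)\phi(b),
\]
so $\phi$ is a topological group isomorphism. In particular, $a$ is divisible by $p^r$ in $\circ$ if and only if $\phi(a)$ is a $p^r$-th power in the multiplicative group of principal units $(1 + p\widehat{\Z_{(p)}}, \cdot)$.

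Finally, since $p$ is odd, the $p$-adic logarithm restricts to a topological group isomorphism $\log : (1 + p\widehat{\Z_{(p)}}, \cdot) \xrightarrow{\cong} (p\widehat{\Z_{(p)}}, +)$, with $\log(1+pb) \equiv pb \pmod{p^2\widehat{\Z_{(p)}}}$. Thus $\phi(a) = 1-pa$ is a $p^r$-th power in $(1+p\widehat{\Z_{(p)}},\cdot)$ exactly when $v_p(\log \phi(a)) \geq r+1$, exactly when $v_p(-pa) \geq r+1$, exactly when $v_p(a) \geq r$, which is the statement that $a$ is divisible by $p^r$ under ordinary addition, equivalently under loop sum. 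This yields the desired equivalence. The main obstacle is step one — pinning down the ring structure on $\{B\Z_p, S^0\}$ and in particular the quantitative relation $y^2 = -py$. The additive content of the Segal conjecture is well documented and the identification $\widehat{IA(\Z_p)} \cong \widehat{\Z_{(p)}}$ has already been used in the preceding section, but one must verify that the smash-product multiplication on stable cohomotopy agrees with the Burnside ring multiplication under this identification. Once that is in place the remaining steps are routine $p$-adic analysis, which works cleanly precisely because $p$ is odd — if $p=2$ were permitted, the $p$-adic logarithm would require additional care.
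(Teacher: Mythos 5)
Your argument is correct, but it proceeds along a genuinely different route from the paper's. The paper's proof is a soft filtration argument: it observes that the set-theoretic bijection $\{B\Z_p,S^0\}\to[B\Z_p,SG]$ preserves the skeletal filtration, that each filtration set is a subgroup for both the loop sum and the circle operation, and that the subquotients $\mathbf{FF}_k$ have order $1$ or $p$. Since closed subgroups of $\widehat{\Z_{(p)}}$ form a single descending chain $p^t\widehat{\Z_{(p)}}$, divisibility by $p^t$ is read off from the common filtration level for both operations, and the result follows without any explicit computation of the ring structure. You instead compute the ring structure on the nose: invoking the ring version of the Segal conjecture to pin down the smash-product multiplication as $ay\cdot by = -pab\,y$ (from $\tau^2 = p\tau$ in the Burnside ring), translating the circle operation into $a\circ b = a+b-pab$, conjugating by $a\mapsto 1-pa$ into the group of principal units, and finishing with the $p$-adic logarithm (valid since $p$ is odd). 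What your approach buys is concreteness and a completely elementary endgame, bypassing any discussion of skeletal filtrations being respected by both $H$-structures; what it costs is a reliance on the multiplicative strengthening of the Segal conjecture, which, as you rightly flag, is the one ingredient you would need to nail down carefully (the paper's argument quietly avoids this by never needing to know the ring product explicitly). Both are correct; the paper's is shorter given the filtration machinery it has already set up in Section 5, while yours is more portable and self-contained once the ring identification is granted.
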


\begin{proof}
By construction the standard map from $\Omega^{\infty}_0
S^{\infty}$ to $SG$ induces a set-theoretic bijection from
$\{B\Z_p,S^0\}$ to $[B\Z_p,SG]$ which is skeletal filtration
preserving.  The sets in these skeletal filtrations are subgroups
with respect to the standard binary operations on the respective
sets.  Therefore the sets $\mathbf{XF}_k(\{B\Z_p,S^0\})$ are
subgroups with respect to both the loop sum and the circle
operation corresponding to the operation on $[B\Z_p,SG]$.
Furthermore, it follows that each subquotient
$\mathbf{FF}_k(\{B\Z_p,S^0\})$ has group structures given by each
binary operation.  These subquotients have order equal to 1 or
$p$; since $\{B\Z_p,S^0\}$ and $[B\Z_p,SG]$ are both isomorphic to
$\widehat{\Z_{(p)}}$,  this means that the classes in
$\mathbf{FF}_k(\{B\Z_p,S^0\})$ are precisely those which are
divisible by the same prime power $p^t$ with respect to each
operation.
\end{proof}

We shall need the following dualization of Proposition 6.1 for lens
spaces:

\begin{prop}
Let $T(L)\subseteq \{L,S^0\}$ denote the image of $\{B\Z_p,S^0\}$
in $\{L,S^0\}$, so that $T(L)$ corresponds to a cyclic subgroup
$[L,J_p]$ of order $p^m$ in $[L,SG]$, where
$$
m=\bigg[\frac{n}{p-1}\bigg].
$$
Then $T(L)$ is a cyclic subgroup of $\{L,S^0\}$ with respect to
the loop sum, and for all positive integers $t$, a class $x\in
T(L)$ has order $p^t$ with respect to the loop sum if and only if
it has order $p^t$ with respect to the circle operation.
\end{prop}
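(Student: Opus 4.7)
The plan is to deduce the proposition from Proposition 6.1 by pulling back along the classifying map $\eta\colon L\to B\Z_p$. The key structural observation is that the induced map $\eta^*\colon\{B\Z_p,S^0\}\to\{L,S^0\}$ is a group homomorphism with respect to \emph{both} the loop sum and the circle operation, because each is a natural binary operation on the functor $\{-,S^0\}$: loop sum comes from the $H$-space structure on $\Omega^\infty_0 S^\infty$, while the circle operation corresponds under the natural set-theoretic bijection $\sigma\colon\{-,S^0\}\to[-,SG]$ to direct sum/composition on $SG$. Since $0$ is the identity for both operations, the kernel of $\eta^*$ is the same subset $K\subseteq\{B\Z_p,S^0\}$ regardless of which structure is used.

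First I would handle the cyclicity statement. Naturality of $\sigma$ carries $T(L)$ bijectively onto the image of the restriction $[B\Z_p,SG]\to[L,SG]$; by Propositions 5.4 and 5.5 this image is cyclic of order $p^m$ with $m=\left[\frac{n}{p-1}\right]$. Since $\{B\Z_p,S^0\}\cong\widehat{\Z_{(p)}}$ under loop sum and the loop-sum quotient $\{B\Z_p,S^0\}/K$ has order $p^m$, the subgroup $K$ must coincide with $p^m\widehat{\Z_{(p)}}$ and $T(L)\cong\Z/p^m$ as a loop-sum group. Transporting the composition structure on $[B\Z_p,SG]$ along $\sigma$ yields that $T(L)$ is likewise cyclic of order $p^m$ under the circle operation, with the same kernel $K$.

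For the order comparison, fix $x\in T(L)$ and choose any lift $y\in\{B\Z_p,S^0\}$. In a cyclic group of order $p^m$, an element has order exactly $p^t$ if and only if it is divisible by $p^{m-t}$ but not by $p^{m-t+1}$. Because $K\subseteq p^r\widehat{\Z_{(p)}}$ for every $r\leq m$, divisibility of $\eta^*y=x$ by $p^r$ in $T(L)$ under loop sum is equivalent to divisibility of $y$ by $p^r$ in $\{B\Z_p,S^0\}$ under loop sum, and the identical descent argument applies with the circle operation in place of loop sum (the kernel is the same subset, and by Proposition 6.1 the $p$-adic filtrations of $\{B\Z_p,S^0\}$ under the two operations coincide). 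Proposition 6.1 then provides the essential bridge: divisibility of $y$ by $p^r$ agrees under loop sum and circle, so chaining the equivalences gives the desired equality of orders.

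The main obstacle, such as it is, is the bookkeeping needed to see that descent of divisibility from $\widehat{\Z_{(p)}}$ to its order-$p^m$ quotient works identically for both operations, and in particular that the common kernel $K$ sits inside every $p^r\widehat{\Z_{(p)}}$ for $r\leq m$ with respect to both structures; once Proposition 6.1 is invoked at the level of $B\Z_p$, the present result is a formal consequence, and Proposition 6.2 should be viewed as the natural lens-space reformulation of the coincidence of operations already established for $B\Z_p$.
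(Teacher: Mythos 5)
Your argument is correct, but it follows a genuinely different route from the paper's. The paper does not descend from $B\Z_p$ to $L$; instead it reruns the skeletal-filtration argument used to prove Proposition 6.1 directly on $T(L)$: the sets $\mathbf{XF}_k$ are subgroups for both the loop sum and the circle operation with subquotients of order $1$ or $p$, so in the cyclic group $T(L)\cong\Z_{p^m}$ (which it is under either operation) the skeletal filtration must coincide with the unique descending chain of subgroups $T(L)\supset pT(L)\supset\cdots\supset 0$, and since the $\mathbf{XF}_k$ do not depend on the choice of operation, the set of elements of each given order is the same for both. You instead treat Proposition 6.1 as a black box and pull the question back to $\{B\Z_p,S^0\}\cong\widehat{\Z_{(p)}}$, resting on two observations: $\eta^*$ is a homomorphism for both operations (being natural for the group-valued functors $\{-,S^0\}$ and $[-,SG]$, which are matched by the natural bijection $\sigma$), and the kernel of $\eta^*$ is the same subset for both operations — namely $K=p^m\widehat{\Z_{(p)}}$, identified via Proposition 6.1. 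The paper's version avoids the divisibility-transfer bookkeeping along $\eta^*$; your version makes the dependence on Proposition 6.1 explicit and turns the reduction into a purely algebraic descent through a common kernel. One point worth spelling out more carefully: the order-$p^m$ cyclicity of $T(L)$ under the loop sum comes from the combination of (i) $T(L)$ being a finite quotient of $\widehat{\Z_{(p)}}$ under $\eta^*$, hence cyclic, and (ii) $|T(L)|=p^m$ from the set-theoretic bijection $\sigma$ onto the order-$p^m$ image of $\eta^*$ in $[L,J_p]$; your phrase ``Transporting the composition structure $\ldots$ yields that $T(L)$ is likewise cyclic of order $p^m$'' compresses these two steps and is worth unwinding, since the bijection $\sigma$ carries one group structure to the other but does not by itself say that the loop-sum group has the same order.
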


\begin{proof}
The assertion that $T(L)$ is a finite cyclic group follows because
the image of $\{B\Z_p,S^0\}$ is a subgroup with respect to the
loop sum, the group $\{L,S^0\}$ is finite, and a finite quotient
of $\widehat{Z_{(p)}}\cong \{B\Z_p,S^0\}$ must be cyclic.  As
before the sets in the skeletal filtration are subgroups with
respect to both binary operations, and the subquotients either
have order 1 or $p$.  Since the set of all elements of exponent
$p^t$ in $\Z_{p^m}$ is cyclic of order $p^t$, it follows that
there is some $k$ such that $\mathbf{XF}_k(\{B\Z_p,S^0\})$ has
order $p^t$ and the latter contains all elements of exponent $p^t$
with respect to both operations.  Likewise, there is some $k' > k$
such that $\mathbf{XF}_{k'}(\{B\Z_p,S^0\})$ has order $p^{t-1}$
with respect to either operation.  Therefore, $\mathbf{XF}_k
\smallsetminus \mathbf{XF}_{k'}$ is the set of all elements with
order $p^t$ for each operation.
\end{proof}

In view of the results from Section 3, we are interested in
determining how far one can desuspend the classes in $T(L)$, and
here is the main result:

\begin{prop}
Let $k$ be an integer such that $1\leq k\leq m-1$, where
$L$, $n$ and $m$ are given as above,  Then a class in $T(L)$
desuspends to $[S^{2k-1}L,S^{2k-1}]$ if and only if its order
divides $p^k$.
\end{prop}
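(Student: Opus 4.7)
The plan is to combine the cyclic structure $T(L)\cong \Z_{p^m}$ (from Proposition 5.5 together with Proposition 6.2) with two external inputs: an exponent bound coming from the work of Cohen--Moore--Neisendorfer, and an explicit desuspension construction that matches the multiplicative filtration $\{p^jT(L)\}$ with a skeletal filtration on $L$. Throughout, I would freely use Proposition 6.2 to pass between loop-sum order and circle-operation order on $T(L)$, and use the surgery-theoretic reformulation from Section 3 (in particular, Proposition 3.5) so that desuspending a $p$-primary class of $\{L,S^0\}$ to $[S^{2k-1}L,S^{2k-1}]$ is equivalent to lifting the corresponding class of $[L,SG]$ through $[L,SG_{2k}]$. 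Because $T(L)$ is cyclic of order $p^m$, the set of elements of order dividing $p^k$ is the unique subgroup $p^{m-k}T(L)$ of order $p^k$, so both directions really amount to identifying this subgroup with the image of the restriction $[S^{2k-1}L,S^{2k-1}]\to\{L,S^0\}$ inside $T(L)$.

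For the \emph{only if} direction, suppose $x\in T(L)$ desuspends to some $\bar x\in[S^{2k-1}L,S^{2k-1}]$. The key input is the Cohen--Moore--Neisendorfer exponent theorem, which says that at an odd prime $p$ the $p$-primary part of $\pi_*(S^{2k-1})$ is annihilated by $p^k$. I would bootstrap this from homotopy groups of the sphere to the full group $[S^{2k-1}L,S^{2k-1}]_{(p)}$ by a skeleton-by-skeleton obstruction argument on $S^{2k-1}L$, using the collapse of the Atiyah--Hirzebruch spectral sequence already established in Section 5. This yields $p^k\bar x=0$ with respect to the loop-sum coming from the suspension structure on the domain, and naturality of suspension together with Propositions 6.1 and 6.2 then transfers this to $p^kx=0$ in $T(L)$.

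For the \emph{if} direction, suppose $x\in p^{m-k}T(L)$, say $x=p^{m-k}x_0$ for a generator $x_0$ of $T(L)$. The plan is to construct a desuspension explicitly. Here $x_0$ is the restriction along $L\to B\Z_p$ of (a generator identified with) the stable transfer of the universal $p$-fold covering, as in the discussion preceding Proposition 5.5. Using the odd-primary product decomposition $SG_{2k(p)}\simeq SF_{2k-1(p)}\times S^{2k-1}_{(p)}$ from the proof of Proposition 3.5 together with Neisendorfer-type $p$-local decompositions of $\Omega^{2k-1}S^{2k-1}$, one aligns the multiplicative filtration $\{p^jT(L)\}$ with the skeletal filtration on $T(L)$ coming from Proposition 5.3 and the collapsing Atiyah--Hirzebruch spectral sequence for $E_0K_{(p)}(L)$. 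Restricting $p^{m-k}x_0$ to the appropriate sub-skeleton of $L$ then lets one build an unstable representative $S^{2k-1}L\to S^{2k-1}$ from a James-style construction applied to the transfer.

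The main obstacle is this \emph{if} direction: the upper bound follows cleanly from Cohen--Moore--Neisendorfer, but producing actual desuspensions requires reconciling the $K$-theoretic $p$-adic filtration on $T(L)$ used in Section 5 with the unstable desuspension levels predicted by the exponent theorem. Carrying this out cleanly depends on the fine structure of $\Omega^{2k-1}S^{2k-1}$ at $p$, and on verifying that the transfer-generator $x_0$ does sit at the expected (top) skeletal level so that multiplication by $p^{m-k}$ pushes it one desuspension deeper.
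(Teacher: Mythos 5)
Your ``only if'' direction runs along the same lines as the paper's: invoke Cohen--Moore--Neisendorfer to bound the order of anything that desuspends. The paper packages this as Lemma 6.1 and proves it directly via the Barratt square lemma and the factorization $\Psi_p = \sigma\tinycirc\pi$ on $\Omega^2 S^{2r+1}$ (rather than the AHSS bootstrap you sketch), but the substance is the same exponent theorem.

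The ``if'' direction is where your proposal has a genuine gap, and you say so yourself: you never establish any base case for desuspension. Your plan---``restricting $p^{m-k}x_0$ to the appropriate sub-skeleton of $L$'' and then applying a ``James-style construction''---conflates two different filtrations. Restricting to a skeleton gives you a class on a smaller complex, not a desuspension of $x_0$ on $L$; the skeletal filtration of Propositions 6.1--6.2 tracks $p$-divisibility, not desuspension level. Aligning those two filtrations is exactly what needs proof, and the alignment does not come for free from the product decomposition of $SG_{2k}$ at odd primes (which is in the proof of Proposition 3.4, incidentally, not 3.5).

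What the paper actually uses, and what your argument lacks, is a connectivity estimate: the localized stabilization map
$$S^{2m+1}_{(p)}\longrightarrow Q_0(S^{2m+1})_{(p)}$$
is $(2(m+1)(p-1)-3)$-connected (from the fibration sequences in \cite{Rav1}). Coupled with the elementary dimension count $2n-1\leq 2(m+1)(p-1)-3$ when $m=\left[\frac{n}{p-1}\right]$, this forces the generator $\tau$ of $T(L)$ to desuspend to $S^{2m+1}$ \emph{automatically}, with no construction needed. From that single base case, the ``if'' direction is finished by the same CMN factorization used for ``only if'': $p\cdot\beta$ always desuspends two stages further than $\beta$, so $p^r\tau$ lands in $S^{2(m-r)+1}$. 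In other words, the desuspension step you were hoping to build by hand is already the content of Lemma 6.1, and the only new input is the stability range estimate. Without that estimate your argument has no anchor, and no amount of unravelling the fine structure of $\Omega^{2k-1}S^{2k-1}$ will substitute for it.
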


\begin{proof}
Fundamental results of F. Cohen, J.C. Moore and J. Neisendorfer
\cite{CohMoNei1}, \cite{CohMoNei2} imply that if a $p$-primary
element $\alpha$ in the stable homotopy groups of spheres
desuspends to $\pi_{m+2k+1}(S^{2k+1})$, then the orders of the
element $\alpha$ and its preimage have orders dividing $p^k$.  In
fact these methods immediately yield a far more general
conclusion:

\begin{lem}
Let $X$ be a finite complex, and let $\alpha$ be a $p$-primary
element of the stable cohomotopy group $\{X,S^0\}_{(p)}$ which
desuspends to $[S^{2k+1}X,S^{2k+1}]_{(p)}$.  Then $\alpha$ and its
preimage have orders dividing $p^k$.
\end{lem}

The ``only if" part of Proposition 6.3 is an immediate consequence
of this result.

\begin{proof}{\sf (Lemma 6.1)}\quad
As noted in \cite{Nei}, Cor. 11.8.2, p. 461, if $\Psi_p: \Omega^2
S^{2r+1}\rightarrow \Omega^2 S^{2r+1}$ is the double looping of
the degree $p$ self-map for $S^{2r+1}$, then $\Psi_p =
\sigma\tinycirc\pi$, where $\sigma: S^{2r-1}\rightarrow \Omega^2
S^{2r+1}$ is adjoint to the identity and $\pi:\Omega^2
S^{2r+1}\to S^{2r-1}$ is a map defined in \cite{Nei} whose precise
description is not needeed here. Therefore, if $Y$ is a
connected finite complex, then the $H$-space structure on $S^{2r+1}$
and the square lemma (see \cite{hilton-duality}, Theorem 1.5, p. 5)
imply that the degree $p$ map from $S^{2r+1}$ from itself induces
multiplication by $p$ on $[S^{2r+1}Y,S^{2r+1}]_{(p)}$ with respect to
the usual addition defned on $[A,B]$ when $A$ is a suspension,
and if $\beta\in [S^{2r+1}Y,S^{2r+1}]_{(p)}$, then
$p\cdot\beta$ desuspends to $[S^{2r-1}Y,S^{2r-1}]_{(p)}$.  One can
now proceed by induction as in \cite{CohMoNei1} and \cite{Nei} to
conclude that $p^r\cdot\beta = 0$ (\textit{e.g.}, see the proof of
\cite{Nei}, Cor. 11.8.3, p. 462).
\end{proof}

\noindent
\textit{Proof of necessity in Proposition $6.3$.}  By Lemma 6.1 and
Proposition 6.2, it will suffice to show that a generator $\tau$
of $T(L)$ desuspends to $S^{2t+1}$, where
$t=\left[\frac{n}{p-1}\right]$. Since $\tau$ has order $p^t$ by
Proposition 6.2, we can use Lemma 6.4 to conclude that $\tau$
cannot double desuspend any further. Similarly, if $r<t$, then it
will follow that $p^r\tau$ must desuspend to $S^{2(t-r)+1}$ but
cannot double desuspend any further.  The conclusion in
Proposition 6.3 follows because a multiple $a\tau$ of $\tau$
satisfies $p^k(a\tau)=0$ if and only if $a$ is divisible by
$p^{t-k}$.

It is well known that the localized stabilization maps
\[
\begin{CD}
S^{2m+1}_{(p)} @>>> Q_0(S^{2m+1})~~ =~~ \lim_{k\to\infty}~\Omega^{k}_0
S^{k+2m+1}_{(p)}
\end{CD}
\]
are very highly connected.  In fact, Propositions 1.5.7 and 1.5.8
of \cite{Rav1} imply that the localized stabilization map is
$(2(m+1)(p-1)-3)$-connected (see also the paragraph
following the statement of Proposition 1.5.8 in
\cite{Rav1}). Therefore, if $2n-1\leq 2(m+1)(p-1)-3$, then
$\tau$ (and its loop sum multiples) will automatically desuspend
to $S^{2m+1}$. In particular, if the preceding inequality holds
when $m=\left[\frac{n}{p-1}\right]$, then $\tau$ will desuspend to
$S^{2m+1}$, and hence the conclusion of Proposition 6.3 will
follow.

Write $n=j(p-1)+s$, where $0\leq s\leq p-1$, so that
$j=\left[\frac{n}{p-1}\right]$.  With this notation the dimension
versus connectivity inequality reduces to
$$
m~~\geq~~ \frac{n+1}{p-1} -1~~ =~~ j+\frac{s+1}{p-1} -1
$$
and, as indicated in the preceding paragraph, we want to verify
that this holds when $m=j$.  To see this, note that $0\leq
s\leq p-2$ implies
$$
-1~~<~~\frac{s+1}{p-1} -1~~\leq~~\frac{p-1}{p-1} -1~~=~~ 0
$$
and therefore we do have
$$
j~~\geq~~j + \frac{s+1}{p-1} -1
$$
which is what we wanted to verify.
\end{proof}

As at the beginning of this section, let $X$ be a connected finite
complex.  The remarks in the first paragraph of this section show
that, if we take the loop sum operation on $\{X,S^0\}$ and the
direct sum operation on $[X,G/\mathrm{Top}]$, then the composite
\[
\begin{CD}
\{X,S^0\} @>>> [X,SG] @>>> [X,G/\mathrm{Top}]
\end{CD}
\]
is not usually additive.  However, we have the following useful
result:

\begin{prop}
In the setting above, there is an infinite loop space structure on
$G/\mathrm{Top}$ such that the displayed composite is a
homomorphism.
\end{prop}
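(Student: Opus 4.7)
The plan is to exhibit $G/\mathrm{Top}$ as the zeroth space of a spectrum $\mathbf{GT}$ equipped with a map of spectra $\mathbb{S} \to \mathbf{GT}$ whose $\Omega^\infty$ recovers, up to the standard shift $\sigma(u) = u + \mathrm{id}$, the classical map $SG \to G/\mathrm{Top}$. Since $\{X,S^0\}$ with loop sum is $[X, \Omega^\infty\mathbb{S}]$ equipped with the spectrum-induced addition, and the group operation on $[X, G/\mathrm{Top}]$ coming from any infinite loop space structure on $G/\mathrm{Top}$ is induced by the spectrum structure on $\mathbf{GT}$, the induced map on $[X,-]$ is automatically a homomorphism as soon as the map between the two spectra exists. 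Thus the entire problem reduces to producing the spectrum-level factorization.

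The most direct route is to invoke the surgery-theoretic identification of $G/\mathrm{Top}$ with (the zeroth space of) Ranicki's $1$-connective quadratic $L$-theory spectrum $\mathbf{L}_\bullet(\Z)$ (see \cite{Ran2}). This puts an infinite loop space structure on $G/\mathrm{Top}$ which is not the Whitney-sum structure used in Section 5. The symmetric signature yields a spectrum map $\mathbb{S} \to \mathbf{L}^\bullet(\Z)$, and after refinement to the appropriate connective cover we obtain a map $\mathbb{S} \to \mathbf{L}_\bullet(\Z)$ whose $\Omega^\infty$ is, up to $\sigma$, the canonical map $SG \to G/\mathrm{Top}$ of the Sullivan-Wall exact sequence.

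A more self-contained alternative would proceed one prime at a time using the results of Section 5 and then reassemble via the arithmetic square. At each odd prime $p$, the map $SG_{(p)} \to G/\mathrm{Top}_{(p)}$ factors through $J_p$, which is defined as the fibre of the infinite loop map $\psi^r - 1: BSO_{(p)} \to BSO_{(p)}$; the inclusion $J_p \hookrightarrow BSO_{(p)} \simeq G/\mathrm{Top}_{(p)}$ is then an infinite loop map, and the retraction $SG_{(p)} \to J_p$ arising from the splitting $SG_{(p)} \simeq J_p \times \CokJ_{(p)}$ is likewise an infinite loop map. At $p=2$, an analogous Sullivan-type splitting of $G/\mathrm{Top}_{(2)}$ supplies the corresponding structure.

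The main obstacle in either route is the verification that the spectrum-level construction does in fact recover, on zeroth spaces, the classical map $SG \to G/\mathrm{Top}$ (after the shift $\sigma$). In the $L$-theory formulation this is a well-known but technical step built into Ranicki's identification of normal invariants with $L$-theory classes. In the prime-local formulation it follows from Propositions 5.1 and 5.2 together with the universal properties of the $K$-theoretic splittings, which pin down the maps on $[X,-]$ uniquely enough to identify them with the standard ones. Once this identification is in hand, the homomorphism statement is a formal consequence of the functoriality of $[X,-]$ applied to a map of spectra.
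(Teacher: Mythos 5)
Your proposal correctly identifies the structure the paper uses --- the Characteristic Variety infinite loop structure on $G/\mathrm{Top}$, whose associated spectrum the paper writes as $\mathbb{L}(1)$ --- and the general principle that the proposition would follow from a spectrum-level factorization of the composite. But the routes you offer to that factorization both have gaps, and the paper sidesteps the issue entirely by checking additivity directly: given $u,v\in\{X,S^0\}$, one takes their $S$-duals in $\pi_{\dim X}^{\mathbf S}(X^\nu)$, makes them transverse to get degree zero tangential normal maps $(f_u,b_u)$, $(f_v,b_v)$, adjoins $\mathrm{Id}_X$ to produce the associated degree one problems, observes that the problem associated to $u+v$ is the disjoint union $(f_u,b_u)\amalg(f_v,b_v)\amalg\mathrm{Id}_X$, and then cites the additivity of Characteristic Variety surgery obstructions under disjoint union (the reference is Ranicki's paper on the surgery obstruction of a disjoint union). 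This makes no appeal to $L$-theory spectra and is what actually establishes the homomorphism property.

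Your first route is not correct as written. The symmetric signature produces a map $\mathbb{S}\to\mathbf{L}^\bullet(\Z)$ into \emph{symmetric} $L$-theory, while the structure the proposition needs is the one coming from \emph{quadratic} $L$-theory; the cofibration $\mathbf{L}_\bullet\to\mathbf{L}^\bullet\to\hat{\mathbf{L}}^\bullet$ does not permit a lift of your map across the symmetrization map, since the composite $\mathbb{S}\to\mathbf{L}^\bullet\to\hat{\mathbf{L}}^\bullet$ is already nonzero on $\pi_0$, and ``passing to a connective cover'' does not convert $\mathbf{L}^\bullet\langle 1\rangle$ into $\mathbf{L}_\bullet\langle 1\rangle$ (the symmetrization is multiplication by $8$ on $\pi_4$, not an equivalence). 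Your second route does not close the gap either, because the failure of additivity is concentrated entirely in the shift $u\mapsto u+\mathrm{id}$ from $\{X,S^0\}$ to $[X,SG]$, not in the map $SG_{(p)}\to G/\mathrm{Top}_{(p)}$; knowing that the latter is an infinite loop map (via the $J_p\times\CokJ_{(p)}$ splitting) leaves the loop-sum-versus-circle-sum discrepancy untouched. You have, in effect, correctly reduced the proposition to a single spectrum-level claim and then acknowledged that claim as ``the main obstacle''; the paper's $S$-duality and disjoint-union argument is what supplies the missing verification, and it does so geometrically rather than by constructing a map of spectra.
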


In fact, this structure is given by suitable versions of D.
Sullivan's Characteristic Variety Theorem (compare \cite{Sul1},
\cite{Jo} or \cite{Nic}).

\begin{proof}
If $X$ is a closed oriented manifold, then the infinite loop space
structure on $G/\mathrm{Top}$ has the following description on the
the set $[X,G/\mathrm{Top}]$: $(i)$ Take Sullivan's family of morphisms
$\varphi_i: V_i\rightarrow X$, where each $V_i$ is either a closed
manifold or a near-manifold with explicitly specified singularities.
$(ii)$ For each $\alpha\in [X,G/\mathrm{Top}]$ construct
surgery problems associated to the various classes $\varphi^*_i
\alpha\in [X,G/\mathrm{Top}]$, and take their Kervaire invariant
or (possibly reduced) signature invariants which live in suitable
cyclic abelian groups $\Lambda_i$.  These yield an embedding of
$[X,G/\mathrm{Top}]$ into $\prod_i\Lambda_i$, and the abelian
group operation on $[X,G/\mathrm{Top}]$ given by this embedding
corresponds to the Characteristic Variety infinite loop space
structure on $G/\mathrm{Top}$ (the associated spectrum is
frequently denoted by symbols like $\mathbb{L}(1)$).

Suppose now that we are given classes $u$ and $v$ in $\{X,S^0\}$,
and let $\chi(u),\chi(v)\in\prod_i\Lambda_i$ by given by the
Characteristic Variety construction.  We need to show that
$\chi(u+v) = \chi(u) + \chi(v)$.  One way of constructing
tangential surgery problems associated to $u$ and $v$ is to begin
by taking their $S$-duals, which lie in the stable homotopy groups
$\pi_{\dim X}^\textbf{S}(X^{\nu})$, where as usual $X^{\nu}$
denotes
the Thom complex of the (formally) 0-dimensional stable normal
bundle $\nu$ of $X$.  If we make these dual maps ``transverse to
the zero section" (stably of course), we obtain degree zero
tangential normal maps $(f_i,b_i)$ for suitable $f_i:
Y_i\rightarrow X$ ($i=u,v$).  The surgery problems associated to
$u$, $v$ and $u+v$ are then given by
$$
(f_u,b_u)~\amalg~\mathrm{Id}_X,\qquad (f_v,b_v)~\amalg~\mathrm{Id}_X,\qquad
(f_u,b_u)~\amalg~(f_v,b_v)~\amalg~\mathrm{Id}_X
$$
respectively.  It is now straightforward to check that if
$\chi(u)$ and $\chi(v)$ are the characteristic variety surgery
obstructions for $u$ and $v$ respectively, then $\chi(u)+\chi(v)$
will give the characteristic variety surgery obstructions for $u+v$
(see \cite{Ran} for a more detailed analysis of such problems).
This proves the result when $X$ is a closed manifold.

To verify the additivity property when $X$ is an arbitrary finite
complex, first note that $X$ has the homotopy type of a compact
manifold with boundary $W$ and $W$ is a retract of $V=\partial
(W\times[0,1])$.  The retraction $V\to W$ defines natural 1--1
mappings in homotopy $[W,Y]\to [V,Y]$ and $\{W,Y\}\to \{V,Y\}$,
where $Y$ is one of the codomains in the sequence
\[
\begin{CD}
\{Z,S^0\} @>>> [Z,SG] @>>> [Z,G/\mathrm{Top}]
\end{CD}
\]
and $Z=V$ or $W$.  Therefore the additivity properties for the
unbounded manifold
$Z=V$ imply the corresponding additivity properties for the
bounded manifold $Z=W$.
\end{proof}

The preceding result yields a useful complement to Proposition
6.3.

\begin{prop}
Let $p$ be an odd prime, let $X$ be a closed oriented manifold,
and let $a\in[X,G/\mathrm{Top}]_{(p)}$ be a class which lies in
the image of
\[
\begin{CD}
[S^{2k+1}X,S^{2k+1}]_{(p)} @>>> \{X,S^0\}_{(p)}~~\cong~~ [X,SG]_{(p)}
@>>> [X,G/\mathrm{Top}]_{(p)}
\end{CD}
\]
where $k\geq 1$.  If $*$ denotes the binary operation on the
codomain given by the Characteristic Variety Theorem and $*^py$
denotes $y*y*\cdots *y$ ($p$ factors), then $*^pa$ lies in the
image of $[S^{2k-1}X,S^{2k-1}]_{(p)}$.
\end{prop}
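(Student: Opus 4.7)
The plan is to combine Proposition 6.4 with the Cohen--Moore--Neisendorfer factorization $\Psi_p = \sigma\tinycirc\pi$ that was already used in the proof of Lemma 6.1. By Proposition 6.4, the composite
\[
\{X,S^0\}_{(p)} \longrightarrow [X,SG]_{(p)} \longrightarrow [X,G/\mathrm{Top}]_{(p)}
\]
is a group homomorphism when the domain carries the loop sum and the codomain carries the binary operation $*$. Therefore, once we choose a class $\widetilde a\in\{X,S^0\}_{(p)}$ mapping to $a$, the element $*^pa$ is the image of the $p$-fold loop sum $p\cdot\widetilde a\in\{X,S^0\}_{(p)}$. Consequently it suffices to exhibit $p\cdot\widetilde a$ as coming from $[S^{2k-1}X,S^{2k-1}]_{(p)}$.

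Next I would lift further. By hypothesis, $\widetilde a$ is the stabilization of some class $\beta\in[S^{2k+1}X,S^{2k+1}]_{(p)}$, and the stabilization map is additive with respect to the loop sum on $\Omega^{2k+1}S^{2k+1}_{(p)}$ and on $\Omega^{\infty}_0S^{\infty}_{(p)}$. Hence $p\cdot\widetilde a$ is the stabilization of $p\cdot\beta$, where the $p$-fold sum on $[S^{2k+1}X,S^{2k+1}]_{(p)}$ is taken with respect to loop sum, i.e.\ it equals the composite of $\beta$ with $\Psi_p:\Omega^2S^{2k+1}_{(p)}\to\Omega^2S^{2k+1}_{(p)}$ after adjunction.

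The decisive step then is the factorization
\[
\Psi_p \;=\; \sigma\tinycirc\pi, \qquad \sigma:S^{2k-1}_{(p)}\longrightarrow\Omega^2S^{2k+1}_{(p)},
\]
which was invoked in the proof of Lemma 6.1 (see \cite{Nei}, Cor.~11.8.2). Using the $H$-space multiplication on $S^{2k+1}$ and the square lemma \cite{Whi}, this factorization forces $p\cdot\beta$ to desuspend twice: there exists $\beta'\in[S^{2k-1}X,S^{2k-1}]_{(p)}$ whose double suspension equals $p\cdot\beta\in[S^{2k+1}X,S^{2k+1}]_{(p)}$. Stabilizing $\beta'$ gives a class in $\{X,S^0\}_{(p)}$ equal to $p\cdot\widetilde a$, and then pushing forward to $[X,G/\mathrm{Top}]_{(p)}$ via the additivity established in Proposition 6.4 produces $*^pa$ as the image of $\beta'$ under the composite in the statement (with $k$ replaced by $k-1$). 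This yields the desired conclusion.

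The one place that deserves care, and is really the only non-formal ingredient, is verifying that the two group structures on $[S^{2k+1}X,S^{2k+1}]_{(p)}$ relevant here agree sufficiently so that ``$p\cdot\beta$'' on the unstable level and ``$p\cdot\widetilde a$'' on the stable level are compatible. This is exactly the loop-sum additivity of stabilization, which holds because both operations come from the loop sum on $\Omega^{2k+1}S^{2k+1}_{(p)}\to\Omega^{\infty}_0 S^{\infty}_{(p)}$; no appeal to the composition or circle operation is needed. Once this identification is in place, the only real analytic input is the Cohen--Moore--Neisendorfer factorization, and the rest is bookkeeping through the diagram of Proposition 6.4.
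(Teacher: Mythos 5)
Your proposal is correct and follows essentially the same path as the paper's proof: lift $a$ to an unstable class $\beta$, use Proposition 6.4 to translate the Characteristic Variety $p$-fold sum into the $p$-fold loop sum, and invoke the Cohen--Moore--Neisendorfer double-desuspension of $p\cdot\beta$ before stabilizing back down. The only difference is that you unwind the $\Psi_p=\sigma\tinycirc\pi$ factorization and the square-lemma step explicitly (which the paper buries in the proof of Lemma 6.1 and then cites tersely here), so this is a faithful expansion rather than a different route.
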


\textbf{Remark. }  The results of \cite{AdPri} imply that the
direct sum and Characteristic Variety structures determine
isomorphic group structures on $[X,G/\mathrm{Top}]_{(p)}$, but
the self-map inducing this isomorphism is not necessarily the
identity map.
\begin{proof}
Let $a'\in[S^{2k+1}X,S^{2k+1}]_{(p)}$ be a preimage of $a$.  Then
if $*$ denotes the loop sum in $[S^{2k+1}X,S^{2k+1}]_{(p)}$, the
results of Cohen, Moore and Neisendorfer imply that $*^pa'$ lies
in the image of $[S^{2k-1}X,S^{2k-1}]_{(p)}$.  Since the displayed
composite is additive if we take the loop space sum on the domain
and the Characteristic Variety sum on the codomains, it follows
that $*^pa$ lifts in the described manner.
\end{proof}

In the next section we shall prove a similar result if the
Characteristic Variety operation is replaced by the direct sum and
$X$ is a mod $p$ lens space (see Proposition 7.2).

\section{Proofs of Theorems 4--7}

As in Sections 3--6, unless stated otherwise, we take $p$ to be a
fixed odd prime.

All that remains is to combine the results of Sections 3--6 into
proofs of the results on $\TT_k^{\mathrm{Top}}(L)$, where
$k\geq 3$ and $L$ is a $\Z_p$ lens space.  In fact, since the
orbit space of an arbitrary free $\Z_p$ action on a sphere is
homotopy equivalent to a lens space, one can extend the entire
discussion to cases where $L$ is a fake lens space.  We begin with
the result (Theorem 5) characterizing the normal invariants of
homotopy structures in $\TT_k^{\mathrm{Top}}(L)$ for $k\geq
3$.

\begin{proof}{\sf (Theorem 5)}\quad
By Proposition 3.2 the set $\theta_k([M,G/\mathrm{Top}])$ consists
of all classes which are in the image of the normal invariant map
$\eta$ and in the image of the map
\[
\begin{CD}
[M,SG_k]@>>> [M,G/\mathrm{Top}]
\end{CD}
\]
We are assuming that the image of $\eta$ is a subgroup, so the proof
of the first part reduces to
checking that the image of $[M,SG_k]$ in
$[M,G/\mathrm{Top}]$ is a subgroup.  The composition product
defines a group structure on $[M,SG_k]$, and the stabilization map
from the latter to $[M,SG]$ is a homomorphism with respect to the
composition operation of $SG$.  But the composition and direct sum
operations are identical on the set of homotopy classes $[M,SG]$,
and since $[M,SG]\rightarrow [M,G/\mathrm{Top}]$ is a homomorphism
with respect to connected sum, it follows that the image of
$[M,SG_k]$ in $[M,G/\mathrm{Top}]$ is a subgroup.  The second part
of Theorem 5 follows by combining Proposition 3.2 with the
conclusions in the final sentence of Proposition 3.1.
\end{proof}

We can now prove Theorem 4 very easily.

\begin{proof}{\sf (Theorem 4)}\quad
Suppose that $L_0$ is homotopy equivalent to a $\Z_p$ lens space $L_0$.
Then  $[L_0,G/\mathrm{Top}]$ is a cyclic $p$-group (hence of odd
order), and Proposition 3.5 implies that the image of $[L_0,SG_3]$
in $[L_0,G/O]$, and hence also in $[L_0,G/\mathrm{Top}]$, must also be
trivial.  But this means that $\theta_3([L_0,G/\mathrm{Top}]) =0$
and therefore a homotopy structure in $\TT_3^{\mathrm{Top}}(L_0)$
must be normally cobordant to the identity.

It remains to prove that classes in $\TT_3^{\mathrm{Top}}(L_0)
\smallsetminus
\TT_2^{\mathrm{Top}}(L_0)$ are detected by the Atiyah-Singer invariant.
In order to avoid notational conflicts we shall assume that the
fundamental group is isomorphic to $\Z_q$ where $q$ is an odd prime
(so we are using $q$ instead of $p$).
The preceding argument shows that every element in $\TT_3^{\mathrm{Top}}
(L_0)$ comes from the action of the Wall group $L^h_{2k}(\Z_q)$ on
the identity mapping from $L_0$ to itself.  This group is isomorphic to
$$L^{s,p}_{2k}(\Z_q)~\oplus~{\widetilde{H}}^0\left(\Z_2;{\widetilde{K_0}}
(\Z[\Z_q])\,\right)$$
where the first summand is isomorphic to
$L_0(\{1\})\oplus\Z^{(q-1)/2}$ (compare \cite{Bak}, p. 388).  The
action of $\Z^{(q-1)/2}$ on $\mathrm{id}:L_0\to L_0$ is detected by
the Atiyah-Singer invariant, and the proof that the latter detects
elements of $\TT_3^{\mathrm{Top}}(L_0)\smallsetminus
\TT_2^{\mathrm{Top}}(L_0)$ amounts to checking that the image of
the summand ${\widetilde{H}}^0(\Z_2;{\widetilde{K_0}}(\Z[\Z_q])$
lies in the subset $\TT_2^{\mathrm{Top}}(L_0)$.  Since this assertion was
already verified in the proof of Theorem 3, it follows that the
Atiyah-Singer invariant detects the difference between
$\TT_3^{\mathrm{Top}}(L_0)$ and $\TT_2^{\mathrm{Top}}(L_0)$.
\end{proof}

Our next result implies the conclusions of Theorems 6--8 for
$\theta_{2k+1}([L,G/\mathrm{Top}])$ where $k\geq 2$.

\begin{prop}
For all $k\geq 2 $ the subquotients
$\theta_{2k}([L,G/\mathrm{Top}])/\theta_{2k-2}([L,G/\mathrm{Top}])$
are either trivial or cyclic of order $p$.  Furthermore, either
$\theta_{2k-1}([L,G/\mathrm{Top}]) =
\theta_{2k-2}([L,G/\mathrm{Top}])$ or
$\theta_{2k-1}([L,G/\mathrm{Top}]) =
\theta_{2k}([L,G/\mathrm{Top}])$
\end{prop}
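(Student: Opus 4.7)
The plan is to identify $\theta_{2k}([L,G/\mathrm{Top}])$ explicitly as a subgroup of a cyclic $p$-group and then read off both conclusions from the cyclic structure.

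First I would apply Theorem 6, whose hypothesis holds here because $L^h_{2n}(\Z_p)=0$ for $p$ odd, so $\eta$ is onto: this gives $\theta_{2k}([L,G/\mathrm{Top}])$ as the image of $[L,SG_{2k}]\to [L,G/\mathrm{Top}]$. Proposition 3.5 (valid for $k\geq 2$ and odd $p$) identifies the image of $[L,SG_{2k}]$ in $[L,SG]\cong\{L,S^0\}$ with the $p$-primary classes that desuspend to $[S^{2k-1}L,S^{2k-1}]$. Meanwhile Propositions 5.1, 5.2, and 5.5 show that the image of $[L,SG]$ in $[L,G/\mathrm{Top}]$ coincides with the image of the cyclic subgroup $T(L)\cong\Z_{p^m}$, with $m=\left[\frac{n}{p-1}\right]$, under a homomorphism whose kernel has order $1$ or $p$.

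Next I would combine these observations with the main desuspension result. Proposition 6.3 identifies the classes in $T(L)$ desuspending to $[S^{2k-1}L,S^{2k-1}]$ as exactly the elements of order dividing $p^k$, i.e.\ the unique subgroup $\Sigma_k\subseteq T(L)$ of order $p^{\min(k,m)}$. (Proposition 6.2 is used to reconcile the loop-sum order on $\{L,S^0\}$ with the composition/direct-sum order on $[L,SG]$, so that the order condition coming from Proposition 3.5 matches the order condition in Proposition 6.3.) Pushing $\Sigma_k$ forward into $[L,G/\mathrm{Top}]$ via a map with kernel of order at most $p$, I obtain $\theta_{2k}$ as a subgroup of order $p^{\min(k,m)}$ or $p^{\min(k,m)-1}$. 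Since $\min(k,m)-\min(k-1,m)\in\{0,1\}$, the index $[\theta_{2k}:\theta_{2k-2}]$ is $1$ or $p$, and as both are subgroups of a cyclic $p$-group the quotient $\theta_{2k}/\theta_{2k-2}$ is itself cyclic, hence trivial or isomorphic to $\Z_p$, proving the first assertion. For the odd-index statement, monotonicity of the filtration yields $\theta_{2k-2}\subseteq \theta_{2k-1}\subseteq \theta_{2k}$. If $\theta_{2k-2}=\theta_{2k}$ we are done; otherwise $\theta_{2k}/\theta_{2k-2}\cong\Z_p$ admits no proper intermediate subgroup, forcing $\theta_{2k-1}$ to coincide with one of the two endpoints.

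The principal obstacle is bookkeeping the various group structures: the canonical bijection between $\{L,S^0\}$ (loop sum) and $[L,SG]$ (composition, equal to direct sum on homotopy classes) is not additive in general, so the identification of the ``desuspension subgroup in $\{L,S^0\}$'' with the ``image of $[L,SG_{2k}]$ in $[L,SG]$'' requires the order-preserving translations of Propositions 6.1 and 6.2, and propagating this to $[L,G/\mathrm{Top}]$ relies on the additivity of $[L,SG]\to[L,G/\mathrm{Top}]$ noted in the proof of Theorem 6. A secondary subtlety is the exceptional case $n\equiv 0\bmod p-1$, where one must verify using Proposition 5.5(ii) that the element of order $p$ generating the kernel lies in $\Sigma_k$ for every $k$ with $\min(k,m)\geq 1$, so that no irregularity enters the comparison between consecutive subgroups.
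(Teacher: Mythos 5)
Your proof plan diverges from the paper's and has a genuine gap at its central step.

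You correctly reduce everything to the question of which classes in $[L,G/\mathrm{Top}]$ arise from $[L,SG_{2k}]$, you correctly note via Propositions 5.1, 5.2, 5.5 that the total image of $[L,SG]$ in $[L,G/\mathrm{Top}]$ equals the image of the cyclic group $T(L)$, and you correctly use Propositions 6.2 and 6.3 to identify the desuspendable classes \emph{inside} $T(L)$ as the cyclic subgroup $\Sigma_k$ of elements of order dividing $p^k$. What you do not justify is the identification $\theta_{2k}=\mathrm{Image}(\Sigma_k)$. The inclusion $\mathrm{Image}(\Sigma_k)\subseteq\theta_{2k}$ is clear, but the reverse inclusion does not follow from the facts you cite. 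An arbitrary $x\in\theta_{2k}$ has a lift $\beta\in\{L,S^0\}_{(p)}$ that desuspends to $[S^{2k-1}L,S^{2k-1}]$, but $\beta$ need not lie in $T(L)$; it can differ from any $T(L)$-lift $\gamma$ of $x$ by contributions from $\mathrm{Cok}\,J_p$ and $q^*\pi_{2n-1}(J_p)$, and there is no reason offered that the $\gamma$ you replace $\beta$ by must itself desuspend (equivalently, that $\gamma\in\Sigma_k$). Proposition 6.3 only characterizes desuspension within $T(L)$, and the non-additivity of $\{L,S^0\}\to[L,SG]$ prevents a naive ``project onto the $J_p$-summand and observe it still desuspends'' argument. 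Everything downstream of the asserted equality --- the order count $p^{\min(k,m)}$ or $p^{\min(k,m)-1}$, the index computation, and the odd-index dichotomy --- depends on it.

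The paper avoids computing the order of $\theta_{2k}$ altogether. Instead it proves $p\cdot\theta_{2k}\subseteq\theta_{2k-2}$ directly, and then invokes cyclicity of $\mathrm{Image}(T(L))$ to conclude the quotient is trivial or $\Z_p$. The essential tool you have omitted is the Characteristic Variety machinery of Propositions 6.4 and 6.5: Proposition 6.4 makes the composite $\{L,S^0\}\to[L,SG]\to[L,G/\mathrm{Top}]$ a homomorphism when the codomain carries the Characteristic Variety sum, and Proposition 6.5 then shows that if $x$ comes from $[S^{2k-1}L,S^{2k-1}]_{(p)}$, the $p$-fold Characteristic Variety sum $\star(p,x)$ comes from $[S^{2k-3}L,S^{2k-3}]_{(p)}$. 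The paper then compares the Characteristic Variety multiple $\star(p,x)$ with the direct-sum multiple $px$ by choosing a $T(L)$-lift $\star(bp^r,\tau)$ of $x$ and using Proposition 6.1 to convert between the loop-sum and circle-sum orders of multiples of $\tau$; a unit adjustment (the integers $b'$, $b''$, $c$ in the paper) shows $px$ and $\star(p,x)$ generate the same subgroup. Note that the $T(L)$-lift is used here only as a bookkeeping device to compare the two abelian group structures on a cyclic group --- it is never asserted to desuspend, which is exactly the claim your argument needs and cannot get. To repair your proposal you would either have to prove the equality $\theta_{2k}=\mathrm{Image}(\Sigma_k)$ (which, in fact, only becomes visible after Theorems 7 and 8 are established), or replace the order computation by the paper's divisibility argument.
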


Since $\theta_3([L,G/\mathrm{Top}]) = 0$ by Theorem 4, we set
$\theta_2([L,G/\mathrm{Top}]) = 0$ by definition.

\begin{proof}
Suppose that $x\in[L,G/\mathrm{Top}]_{(p)}$ lies in the image of
$[L,SG_{2k}]_{(p)}$.  Then by Proposition 3.4 we know that $x$ also
lies in the image of $[L,SF_{2k-1}]_{(p)}\cong
[S^{2k-1}L,S^{2k-1}]_{(p)}$.  Therefore, if we let $\star(k,w)$
denote the $k$-fold loop or Characteristic Variety sum on
$[L,SG]_{(p)}$ or $[L,G/\mathrm{Top}]_{(p)}$, then by Proposition
6.5 we know that the loop sum $\star(p,x)$ lies in the image of
$[S^{2k-3}L,S^{2k-3}]_{(p)}$.  We need to show this implies that
$px$ (the $p$-fold loop or composition sum of $x$ with itself)
lies in the image of $[L,SG_{2k-2}]_{(p)}$.

If $\tau$ is the generator of the cyclic $p$-group $T(L)$
described in Section 6, then there are unique integers $r\geq
0$ and $b$ such that $b$ is prime to $p$ and
$\star(bp^r,\tau)\in \{L,S^0\}_{(p)}$ maps to $x$.  By
Proposition 6.4 we know that $\star(bp^{r+1},\tau)$ maps to
$\star(p,x)$.

Proposition 6.1 now implies that
$\star(bp^r,\tau) = b'p^r\tau$ and $\star(bp^{r+1},x) =
b''p^{r+1}\tau$ for some integers $b'$ and $b''$ prime to $p$.  By
construction and our previous observations, it follows that
$b'p^r\tau$ maps to $x\in\theta_{2k}([L,G/\mathrm{Top}])$ and
$b''p^{r+1}\tau$ maps to some element of
$\theta_{2k-2}([L,G/\mathrm{Top}])$.  Choose an integer $c$ such
that $cb''\equiv b'$ modulo a sufficiently large power of $p$ (say
at least $p^n$).  Then we can also conclude that
$$
p\,x = p\cdot\mathrm{Image}(b'p^r\tau) = \mathrm{Image}(b'p^{r+1}\tau) =
\mathrm{Image}(cb''p^{r+1}\tau) = c\cdot\mathrm{Image}(b''p^{r+1}\tau)
$$
lies in $\theta_{2k-2}([L,G/\mathrm{Top}])$.  Since $x$ is
arbitrary, this means that
$$
p\ \theta_{2k}([L,G/\mathrm{Top}])~~\subseteq~~
\theta_{2k-2}([L,G/\mathrm{Top}])
$$
and since the image of $[L,SG]_{(p)}$ in $[L,G/\mathrm{Top}]$ is a
finite cyclic $p$-group this means that
$\theta_{2k}([L,G/\mathrm{Top}])/\theta_{2k-2}([L,G/\mathrm{Top}])$
is either trivial or cyclic of order $p$.
\end{proof}

\textbf{Note.} If $X$ is a finite complex, it is well known that
an element of $\{X,S^0\}_{(p)}$ desuspends to
$[S^{2k}X,S^{2k}]_{(p)}$ if and only if it desuspends to
$[S^{2k-1}X,S^{2k-1}]_{(p)}$ (\textit{e.g.}, see \cite{Rav1} or
\cite{Tod}) but apparently very little is known about
classes in $[X,SG]_{(p)}\cong \{X,S^0\}_{(p)}$ which lift to
$[X,SG_{2k+1}]_{(p)}$ outside the stable range where
$[S^{2k-1}X,S^{2k-1}]_{(p)}\rightarrow \{X,S^0\}_{(p)}$ is an
isomorphism (as in Section 6, this is roughly the range in which
$(p-1)k\geq \dim X$.

\begin{proof}{\sf (Theorem 6)}
The stable range results of Section 6 imply that
$\theta_{2k}([L,G/\mathrm{Top}]) = \theta_N([L,G/\mathrm{Top}])$
for all $N\geq 2k$ if we take $k=\left[\frac{n}{p-1}\right]$.
Therefore, by the preceding result, it is only necessary to prove
that
$$
\theta_{2k}([L,G/\mathrm{Top}])/\theta_{2k-2}
([L,G/\mathrm{Top}])~~\cong~~\Z_p
$$
if $2\leq k\leq\left[\frac{n}{p-1}\right] +1$.
 Since $n\not\equiv 0$ mod $p-1$, we know that $[L,J_p]$ maps
bijectively onto the image of $[L,SG_{(p)}]$ in
$[L,G/\mathrm{Top}]$ and that $[L,J_p]$ is cyclic of order
$p^{\left[\frac{n}{p-1}\right]}$.  By Proposition 7.1 we know that
the subquotients
$\theta_{2k}([L,G/\mathrm{Top}])/\theta_{2k-2}([L,G/\mathrm{Top}])$
have order equal to 1 or $p$, where $2\leq k\leq
\left[\frac{n}{p-1}\right] +1$.  Since the product of their orders
equals the order of $[L,J_p]$, it follows that each factor
$\theta_{2k}([L,G/\mathrm{Top}])/\theta_{2k-2}([L,G/\mathrm{Top}])$
must have order $p$.
\end{proof}

\begin{proof}{\sf (Theorem 7)}
The main difference between this case and the previous ones is
that the map from $[L,J_p]$ to $[L,G/\mathrm{Top}]$ has a kernel
isomorphic to the nonzero group $\pi_{2n-1}(J_p)$.  Similarly, if
$T'(L)$ is the image of $T(L)$ in $[L,G/\mathrm{Top}]$ (with
$T(L)$ as in Section 6), then the map $T(L)\rightarrow T'(L)$ has
a kernel of order $p$.  We now have $\left[\frac{n}{p-1}\right]$
factors of the form
$\theta_{2k}([L,G/\mathrm{Top}])/\theta_{2k-2}([L,G/\mathrm{Top}])$
where $2\leq k\leq \left[\frac{n}{p-1}\right] +1$, but the
order of $T'(L)$ is $p^{\left[\frac{n}{p-1}\right] -1}$. Since the
orders of the factors are again either 1 or $p$ and their product
is the order of $T'(L)$, it follows that all but one factor
$\theta_{2k}([L,G/\mathrm{Top}])/\theta_{2k-2}([L,G/\mathrm{Top}])$
must have order $p$ and the remaining factor will necessarily have
order 1.
\end{proof}

In general, the determination of the exceptional factor
$\theta_{2k}([L,G/\mathrm{Top}])/\theta_{2k-2}([L,G/\mathrm{Top}])$
seems to be a very difficult problem in homotopy theory.  However, one can
obtain strong restrictions on $k$ for a smooth version of the
tangential thickness problem, and these lead to partial results in
other cases.  We shall only illustrate the latter with a few
examples; their statement requires the following observation.

\begin{prop}
In the setting as above, assume that $\dim L = 2n-1$ where
$n=p^\nu(p-1)r$ for some $\nu\geq 1$ and $r$ is prime to $p$.
Let $J\theta_n(L)$
denote all classes in $[L,G/O]_{(p)}$ which lie in the images of
$[L,J_p]$ and $[L,SG_n]_{(p)}$.  Then for all $k$ the quotients
$J\theta_{2k}(L)/J\theta_{2k-2}(L)$ are either $0$ or $\Z_p$.  The
quotients vanish if $k > \left[\frac{n}{p-1}\right] +1$, and
there is also a
unique value $k_0$ of $k$ such that $2\leq k_0\leq \nu+1\leq
\left[\frac{n}{p-1}\right] +1$ and the quotient vanishes.
\end{prop}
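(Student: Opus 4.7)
My plan is to combine the structural decomposition of $[L, J_p]$ from Proposition 5.4(ii) with the order-counting argument underlying Theorem 8, refined by the sharp desuspension bounds of Lemma 6.1. Let $A \subset [L, J_p]$ denote the image of $\eta^*$ (cyclic of order $p^m$, where $m = [n/(p-1)] = p^\nu r$) and $B \subset [L, J_p]$ the image of $q^*$ (cyclic of order $p^\nu$); by Proposition 5.4(ii) we have $|A \cap B| = p$, the kernel of $[L, J_p] \to [L, G/\mathrm{Top}]_{(p)}$ is exactly $B$, and the image $T'(L) := A/(A \cap B)$ is cyclic of order $p^{m-1}$. Since this image lands in the $BSO_{(p)}$-summand of $[L, G/O]_{(p)}$, working in $[L, G/O]_{(p)}$ rather than $[L, G/\mathrm{Top}]_{(p)}$ makes no difference.

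First I would apply the argument of Proposition 7.1 restricted to the $J_p$-summand of $SG_{(p)}$ to conclude that each subquotient $J\theta_{2k}(L)/J\theta_{2k-2}(L)$ is either $0$ or $\Z_p$. The vanishing for $k > m + 1$ follows because Proposition 6.3 implies that the entire subgroup $A$ lifts to $[L, SG_{2(m+1)}]_{(p)}$, so $J\theta_{2k}(L)$ stabilizes at $T'(L)$ beyond that level. Counting: with $m$ subquotients indexed by $k = 2, \ldots, m + 1$ whose orders multiply to $|T'(L)| = p^{m-1}$, exactly one factor is trivial and the rest are $\Z_p$.

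To locate the exceptional $k_0$ in $[2, \nu + 1]$, I would show that the subquotient is $\Z_p$ for every $k$ with $\nu + 2 \leq k \leq m + 1$. For such $k$, every class in $B$ already lifts to $[L, SG_{2(k-1)}]_{(p)}$: a generator $\alpha$ of $\pi_{2n-1}(J_p)$ has order $p^\nu$ and, by Lemma 6.1 combined with the unstable realization of the image of $J$, desuspends to $\pi_{2n + 2\nu}(S^{2\nu + 1})$, so $q^*\alpha$ lifts to $[L, SG_{2(\nu + 1)}]_{(p)}$. Writing $S_k^J := [L, J_p] \cap \mathrm{image}\bigl([L, SG_{2k}]_{(p)} \to [L, SG]_{(p)}\bigr)$, we therefore have $B \subset S_{k-1}^J$ for every $k \geq \nu + 2$, so $J\theta_{2k}(L) = S_k^J/B$ and the strict growth of $S_k^J$ by a factor of $p$ at each step is supplied by the new elements of $A$ of order exactly $p^{k-1}$ first lifting at level $k$---the sharp desuspension statement established in the proof of Proposition 6.3.

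The main obstacle will be the realization step above: verifying that the order-$p^\nu$ generator $\alpha$ of $\pi_{2n-1}(J_p)$ actually desuspends down to $\pi_{2n + 2\nu}(S^{2\nu + 1})$, rather than stopping at some higher sphere. This pairs the upper bound of Lemma 6.1 with an unstable $J$-homomorphism construction and is where the precise bound $k_0 \leq \nu + 1$ acquires its content. Granted that realization, the counting forces the $m - \nu$ subquotients over $k \in [\nu + 2, m + 1]$ to contribute $p^{m - \nu}$ in total, leaving $p^{\nu - 1}$ to be distributed over the $\nu$ remaining subquotients for $k \in [2, \nu + 1]$: exactly one of them must vanish, yielding the claimed unique $k_0$.
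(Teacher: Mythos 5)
Your proposal follows essentially the same route as the paper's proof: decompose $[L,J_p]$ into the subgroups $A = \operatorname{image}\eta^*$ and $B = \operatorname{image}q^*$, use the exponent/desuspension results of Section 6 to get sharp lifting bounds, and count orders against $|T'(L)| = p^{m-1}$ to locate the single trivial subquotient. The realization step you flag as the "main obstacle" — that the generator of $\pi_{2n-1}(J_p)\cong\Z_{p^\nu}$ desuspends all the way to $\pi_{2n+2\nu}(S^{2\nu+1})$ — is exactly the input the paper supplies by citing Crabb--Knapp \cite{CraKna}, so you have correctly isolated the one nontrivial external ingredient.
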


\begin{proof}
Let $y\in [X,SG_{(p)}]$, where $X$ is a connected complex.  Then
the image of $y$ in $[X,G/O]_{(p)}$ lifts to $[X,SG_{2k(p)}]$ if
and only if $y=y_1+y_2$ where $y_1$ lies in the latter group and
$y_2$ lies in the image of $[X,SO]_{(p)}\rightarrow [X,SG]_{(p)}$.
If $X$ is the lens space $L^{2n-1}$, then by Proposition 4.5 we
know that $[L,J_p]$ is generated by the image of $\pi_{2n-1}(J_p)$
under the degree 1 normal map from $L^{2n-1}$ to $S^{2n-1}$.
Furthermore, the latter map also induces an isomorphism from
$\pi_{2n-1}(SO)$ to $[L,SO]$, and hence the image of $[L,SO]$ in
$[L,J_p]$ equals the image of $\pi_{2n-1}(J_p)$ in $[L,J_p]$.  By
naturality, the images of $[B\Z_p,J_p]\cong [B\Z_p,SG]$ and
$\pi_{2n-1}(J_p) = (\hbox{Image of the $J$-homomorphism in }
\pi_{2n-1}(SG))$ are also subgroups with respect to the loop sum
operation on $SG_{(p)}$ that we have denoted by $*$ or $\star$,
and therefore it turns out that $[L,J_p]\subseteq [L,SG]_{(p)}$ is
also a subgroup with respect to this loop space operation (of
course, usually one cannot expect such a conclusion).

Since $[L,J_p]$ contains the image of $J$, a class $y\in [L,J_p]$
maps to $J\theta_{2k}(L)\subseteq [L,G/O]_{(p)}$ if and only if it
has the form $y_1*y_2$ (with respect to the loop sum), where $y_2$
comes from $\pi_{2n-1}(J_p)$ and $y_1$ lies in the image of the
stabilization map from $[S^{2k-1}L,S^{2k-1}]_{(p)}$ to
$\{L,S^0\}_{(p)}$.  As noted before, the group $\pi_{2n-1}(J_p)$
is cyclic of order $p^\nu$.  Therefore, if a class $w\in T(L) =
\mathrm{Image}\;[B\Z_p,J_p]$ has order $p^j$ for $j\geq \nu
+1$, then $w$ plus anything coming from $\pi_{2n-1}(J_p)$ will
also have order $p^j$.  In particular, this means that no such sum
can desuspend to $[S^{2j-1}L,S^{2j-1}]_{(p)}$.  On the other hand,
it is known that the generator of $\pi_{2n-1}(J_p)$ does desuspend
to $\pi_{(2n-1)+(2\nu+1)}(S^{2\nu+1})$ (for example, see
\cite{CraKna}), and if we combine this with Proposition 6.3 we
conclude that if $k\geq \nu +1$, then a class lies in
$J\theta_{2k}(L)$ if and only if it has order dividing $p^k$.  The
nontriviality assertion about the quotients
$J\theta_{2k+2}(L)/J\theta_{2k}(L)$ is an immediate consequence of
this.
\end{proof}

When $\nu = 1$ the proposition states that the factors
$J\theta_{2k+2}(L)/J\theta_{2k}(L)$ are nontrivial for all
$k\geq 2 = \nu+1$, and by the arguments employed in the
proofs of Theorems 6 and 7 it follows that
$J\theta_4(L)/J\theta_2(L)$ must be trivial.  This suggests the
following:

\textbf{Conjecture.}  \textit{In Theorem $7$, the unique trivial
quotient
$$\theta_{2k}([L,G/\mathrm{Top}]_{(p)})/
\theta_{2k-2}([L,G/\mathrm{Top}]_{(p)})$$
is given by
$\theta_4([L,G/\mathrm{Top}]_{(p)})/\theta_2([L,G/\mathrm{Top}]_{(p)})$
and accordingly the remaining quotients\\
$\theta_{2k}([L,G/\mathrm{Top}]_{(p)})/\theta_{2k-2}([L,G/\mathrm{Top}]_{(p)
})$
are nontrivial for all $k\geq 3$.}

Finally, we shall use Proposition 7.2 to verify the conjecture on
exceptional dimensions when $n=j(p-1)$ for $j=1,\ldots ,p-1$.

\begin{prop}
Assume the setting of Theorem $7$ and Proposition $7.2$, and also let
$n=j(p-1)$ where $1\leq j\leq p-1$.  Then the quotients
$\theta_{2k+2}([L,G/\mathrm{Top}])/\theta_{2k}([L,G/\mathrm{Top}])$
are isomorphic to $\Z_p$ if $k\geq 2$, and
$\theta_4([L,G/\mathrm{Top}]) = 0$.
\end{prop}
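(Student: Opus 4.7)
The plan is to reduce Proposition~7.3 to the single assertion that $\theta_4([L,G/\mathrm{Top}])=0$; the remaining statements then follow from Theorem~8. Since $n=j(p-1)\equiv 0\pmod{p-1}$, Theorem~8 asserts that among the quotients $\theta_{2k+2}([L,G/\mathrm{Top}])/\theta_{2k}([L,G/\mathrm{Top}])$ for $1\leq k\leq [n/(p-1)]=j$, exactly one is trivial and the rest are isomorphic to $\Z_p$. Because $\theta_2=0$ by convention, establishing $\theta_4=0$ forces the exceptional index to be $k_0=1$, and hence all remaining quotients in the range $2\leq k\leq j$ are isomorphic to $\Z_p$.

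To establish $\theta_4=0$, I would combine Proposition~3.5 with Lemma~6.1 and Proposition~5.5. By Proposition~3.5, a $p$-primary class $\alpha\in[L,SG]_{(p)}$ lies in the image of $[L,SG_4]_{(p)}$ if and only if, when viewed in $\{L,S^0\}_{(p)}$, it desuspends to $[S^3L,S^3]_{(p)}$; Lemma~6.1 then forces $\alpha$ to have loop-sum order dividing $p$. Using the $p$-local splitting $[L,SG]_{(p)}\cong[L,J_p]\oplus[L,\CokJ]_{(p)}$ and the fact that $\CokJ_{(p)}\to G/\mathrm{Top}_{(p)}$ is nullhomotopic, the image of $\alpha$ in $[L,G/\mathrm{Top}]_{(p)}$ is determined by its projection to $[L,J_p]$, which also has loop-sum order dividing $p$. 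Proposition~5.5(ii), combined with the order $|E_0K_{(p)}(L)|=p^{j-1}$ from Proposition~5.3, tells us that the kernel of $[L,J_p]\to[L,G/\mathrm{Top}]_{(p)}$ has order $p$, forcing $|[L,J_p]|=p^j$ and $|q^*(\pi_{2n-1}(J_p))|=p$; together with the intersection-of-order-$p$ statement in Proposition~5.5(ii), this yields $q^*(\pi_{2n-1}(J_p))\subseteq T(L)$, so that $[L,J_p]=T(L)$ is cyclic of order $p^j$, its unique subgroup of order $p$ is $\langle p^{j-1}\tau\rangle$, and this subgroup is exactly the kernel of the projection to $[L,G/\mathrm{Top}]_{(p)}$. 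Since Proposition~6.2 guarantees that loop-sum and composition orders agree on $T(L)$, the order-$\leq p$ subgroup of $[L,J_p]$ is precisely this kernel, whence $\theta_4=0$.

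The main obstacle is the cyclicity of $[L,J_p]$, which ensures that its unique order-$p$ subgroup coincides with the kernel of $[L,J_p]\to[L,G/\mathrm{Top}]_{(p)}$. This cyclicity follows from the order counts above and the intersection statement in Proposition~5.5(ii); once it is in hand, the Cohen--Moore--Neisendorfer desuspension bound (Lemma~6.1) singles out $k_0=1$ as the exceptional value predicted by Theorem~8, thereby completing the proof.
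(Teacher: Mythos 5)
Your argument is correct, but it follows a genuinely different route from the paper. The paper reduces Proposition 7.3 to Proposition 7.2: it observes that $\CokJ_p$ is $(2p(p-1)-3)$-connected, so in the range $1\leq j\leq p-1$ one has $[L,\CokJ_p]=0$, whence $[L,SG_{(p)}]\cong[L,J_p]$ and the image of $[L,\mathrm{STop}]_{(p)}$ in $[L,SG]_{(p)}$ coincides with that of $[L,SO]_{(p)}$, giving $J\theta_{2k}(L)=\theta_{2k}([L,G/\mathrm{Top}])$; then the constraint $2\leq k_0\leq\nu+1$ of Proposition 7.2 (whose proof uses the Crabb--Knapp sphere-of-origin result for the image of $J$) forces the exceptional index to the bottom. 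You instead establish $\theta_4=0$ directly, by combining Proposition 3.4 with $k=2$ (note: you cite Proposition 3.5, but the relevant statement is Proposition 3.4) and the Cohen--Moore--Neisendorfer bound in Lemma 6.1 to show any class lifting to $[L,SG_4]_{(p)}$ has loop-sum order at most $p$; you then use the order counts from Propositions 5.3 and 5.5(ii) to show $[L,J_p]=T(L)$ is cyclic of order $p^j$ with the kernel of $[L,J_p]\to[L,G/\mathrm{Top}]_{(p)}$ being the unique order-$p$ subgroup, and appeal to Proposition 6.2 to translate between the two operations. The advantage of your route is that it avoids both the $J\theta$-to-$\theta$ comparison and the explicit desuspension result for the image-of-$J$ generator; the cost is the order-counting argument for cyclicity of $[L,J_p]$, which you supply. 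One small point worth flagging: when you say ``the image of $\alpha$ in $[L,G/\mathrm{Top}]_{(p)}$ is determined by its projection to $[L,J_p]$,'' the clean justification in this range is simply that $[L,\CokJ_p]=0$ (the very connectivity fact the paper's proof uses explicitly), since the splitting $SG_{(p)}\simeq J_p\times\CokJ_p$ is an infinite-loop splitting while $SG\to G/\mathrm{Top}$ respects a different $H$-structure; without $[L,\CokJ_p]=0$, ``determined by its projection'' would require a further argument.
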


\begin{proof}
The space $\CokJ_p$ is $(2p(p-1)-3)$-connected (see \cite{Tod}),
and therefore $[L,J_p]\cong [L,SG_{(p)}]$ if $2n-1 < 2p(p-1)-3$.
As in the proof of Proposition 7.2, a class $y\in [L,J_p]$ maps
into $\theta_{2j}([L,G/\mathrm{Top}])$ if and only if $y=y_1+y_2$
where $y_1$ comes from $[L,\mathrm{STop}]_{(p)}$ and $y_2$
desuspends to $[S^{2k-1}L,S^{2k-1}]_{(p)}$.

In order to proceed further, we need to examine the image of
$[L,\mathrm{STop}]_{(p)}$ in $[L,SG]_{(p)}$ using the Sullivan
splittings:
$$
\mathrm{STop}_{(p)}\simeq SO_{(p)}\times \CokJ_p~,\qquad
SG_{(p)}\simeq J_p\times \CokJ_p
$$

It follows that the image of $[L,\mathrm{STop}]_{(p)}$ in
$[L,SG]_{(p)}$ is the sum of $[L,\CokJ_p]$ with the image of
$[L,SO]_{(p)}$ in $[L,J_p]$.  If we are in the connectivity range
of $\CokJ_p$, this means that the images of $[L,SO]_{(p)}$ and
$[L,\mathrm{STop}]_{(p)}$ in $[L,SG]_{(p)}\cong [L,J_p]$ are equal.
Therefore, if $n$ satisfies the constraint in the proposition then
we have $J\theta_{2k}(L) = \theta_{2k}([L,G/\mathrm{Top}])$. Since
the quotients $J\theta_{2k+2}(L)/J\theta_{2k}(L)$ satisfy the
conditions in the proposition, it follows that the quotients
$\theta_{2k+2}([L,G/\mathrm{Top}])/\theta_{2k}([L,G/\mathrm{Top}])$
also satisfy these if $n=j(p-1)$ where $1\leq j\leq
p-1$.
\end{proof}

It should be possible to extend the range of dimensions for which
similar conclusions hold if one uses
the splitting of the suspension of $B\Z_p$ in \cite{Hol}
and known results on $\pi_m(\CokJ_p)$
for $m$ roughly less than $2j(p-1)$ where $j< p^2 -
$({\sf some constant}) due to Toda \cite{Tod}, but the calculations
needed to do this would be considerably more complicated than the
ones in this paper.

\section{Comments on the Smooth Case and Twisted Tangential Thickness}

In this section we shall discuss two variants of topological
tangential thickness which arise naturally in other contexts.

\subsection{Smooth Tangential Thickness and Lens Spaces}

It is clearly possible to introduce a corresponding notion of
tangential thickness in the smooth category, and in fact this
goes all the way back to \cite{Maz}.  We shall state one
such result for fake lens spaces without proof:

\begin{prop}
Let $L^{2n-1}$ be a lens space, where $n\geq 2$.
\begin{itemize}
\item[(i)] If $n\not\equiv 0$ mod $p-1$, then for each $k$ such
that $1\leq k\leq \left[\frac{n}{p-1}\right]$ there is a
manifold $L_k$ (which is tangentially homotopy equivalent to $L$)
such that $L_k\times\R^{2k}$ and $L\times\R^{2k}$ are not
homeomorphic but $L_k\times\R^{2k+2}$ and $L\times\R^{2k+2}$ are
diffeomorphic.

\item[(ii)] If $n=p^\nu(p-1)r$ where $\nu\geq 0$ and $r$ is
prime to $p$, then the same conclusion holds for all but one
value of $k$, and the exceptional value is less than or equal to
$\nu+1$.
\end{itemize}
\end{prop}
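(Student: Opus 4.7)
The strategy is to replicate the argument of Theorems 7--9 in the smooth category. One first needs the smooth analog of Proposition 3.2, obtained by combining the smooth $h$-cobordism theorem with the smooth theory of disk/block bundles in dimensions $\geq 5$: it gives $L_k\times\R^{2m}\cong L\times\R^{2m}$ diffeomorphically exactly when the normal invariant $\eta(f)\in[L,G/O]$ of a tangential equivalence $f:L_k\to L$ lifts through $[L,SG_{2m}]\to [L,G/O]$. In parallel with $\theta_m([L,G/\mathrm{Top}])$, one then introduces $\theta_m([L,G/O])$ as the intersection of the tangential normal invariants with this lifted image, and the task becomes to show that these smooth thickness sets jump by $\Z_p$ at each permissible $k$ (with at most one exception in case (ii)) and that the new classes are realized by genuine smooth manifolds.

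The key reduction is to the $J_p$-summand, exactly as in Proposition 7.2. Let $J\theta_m(L)\subseteq [L,G/O]$ consist of those classes coming from $[L,J_p]$ which lift to $[L,SG_m]$. The argument proving Proposition 7.1, fed with the desuspension criterion of Proposition 6.3 and the Characteristic Variety identity of Proposition 6.5, shows that each subquotient $J\theta_{2k+2}/J\theta_{2k}$ is either $0$ or $\Z_p$ and vanishes once $k>[n/(p-1)]$. A simple order count then finishes the $J$-analysis. In case (i), $\pi_{2n-1}(J_p)=0$ and Proposition 5.5 makes $[L,J_p]\to[L,G/O]$ injective, so the image is cyclic of order $p^{[n/(p-1)]}$ and every permissible quotient must be $\Z_p$. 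In case (ii), the kernel of $[L,J_p]\to[L,G/O]$ has order $p$, the image has order $p^{[n/(p-1)]-1}$, and therefore exactly one of the quotients in the permissible range is trivial; Proposition 7.2 pins the exceptional index $k_0$ to $k_0\leq \nu+1$.

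To realize a class $x\in J\theta_{2k+2}(L)$ not already lying in $\theta_{2k}([L,G/\mathrm{Top}])$ as the normal invariant of a smooth tangential equivalence $f:L_k\to L$, I would invoke Wall's surgery exact sequence for $L$: since $p$ is odd and $2n-1$ is odd, $L^h_{2n-1}(\Z_p)=0$ by \cite{Wall3}, so the smooth normal invariant map $\mathbb{S}^{O}(L)\to [L,G/O]$ is onto, and $x$ is realized by some smooth pair $(L_k,f)$. By construction, $x\in\theta_{2k+2}([L,G/O])$ supplies the required diffeomorphism $L_k\times\R^{2k+2}\cong L\times\R^{2k+2}$, while $x\notin\theta_{2k}([L,G/\mathrm{Top}])$ rules out any homeomorphism $L_k\times\R^{2k}\cong L\times\R^{2k}$, since the smooth thickness set is contained in its topological analog.

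The main technical obstacle is the $\CokJ_p$-summand of $[L,SG]_{(p)}\to [L,G/O]_{(p)}$: it is absent in the topological case, where $G/\mathrm{Top}_{(p)}\simeq BSO_{(p)}$ kills $\CokJ_p$, but is essentially uncontrolled in the smooth case, as the note following Proposition 5.5 emphasizes. Fortunately, $\CokJ_p$ is $(2p(p-1)-3)$-connected, so these extra contributions vanish in the low-dimensional range and in any event can only enlarge $\theta_{2m}([L,G/O])$; they therefore cannot obstruct the existence assertion of Proposition 8.1. A complete classification of smooth tangential thickness for fake lens spaces would by contrast require effective computation of $[L,\CokJ_p]$ and appears to be considerably harder.
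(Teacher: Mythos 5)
The paper states this proposition explicitly \emph{without proof} (``We shall state one such result for fake lens spaces without proof''), so there is no official argument to compare against. Your proposal assembles the proof that the paper clearly intends the reader to supply—porting the $\theta_k$-analysis of Theorems~5--9 to the smooth category, reducing to the $J_p$-summand via Proposition~7.2, counting orders using Propositions~5.5 and~6.3, and realizing the resulting normal invariants by surgery—and this is certainly the right strategy. Your observation about the $\CokJ_p$-summand is also on target: it vanishes in the range needed here and in any case only enlarges the smooth $\theta$-sets, so it cannot obstruct the existence assertion.

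Three points should be tightened. First, for the diffeomorphism $L_k\times\R^{2k+2}\cong L\times\R^{2k+2}$ you need the $s$-cobordism theorem, not merely the $h$-cobordism theorem; this means the class $x$ must be realized by a \emph{simple} homotopy equivalence, so the relevant surgery sequence uses $L^s_*$ and $\mathbb{S}^{O,s}(L)$ rather than $L^h_*$. Fortunately $L^s_{2n-1}(\Z_p)=0$ as well (Wall, odd-order groups in odd dimensions), so the realization goes through, but the decorations should be stated. Second, the assertion that $x\notin\theta_{2k}([L,G/\mathrm{Top}])$ ``rules out any homeomorphism $L_k\times\R^{2k}\cong L\times\R^{2k}$'' requires tangential rigidity of $L$ (Proposition~\ref{prop:rigid} and Proposition~\ref{prop:welldef}): a priori some \emph{other} tangential homotopy equivalence $f':L_k\to L$ could have a normal invariant that does lift, and rigidity is exactly what forecloses that. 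Since the paper proves rigidity for $\Z_p$-lens spaces, the gap is easy to close, but it must be invoked. Third, you write ``$x\in J\theta_{2k+2}(L)$ not already lying in $\theta_{2k}([L,G/\mathrm{Top}])$''—these two sets live in different groups ($[L,G/O]$ versus $[L,G/\mathrm{Top}]$); what you need is that the image of $x$ under $[L,G/O]\to[L,G/\mathrm{Top}]$ lies outside $\theta_{2k}([L,G/\mathrm{Top}])$, and this should be justified by the comparison in the proof of Proposition~5.2 (the smooth-to-topological map is multiplication by a unit mod $p$ on the relevant summand). With these emendations your argument is sound.
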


One easy way of seeing the relative complexity of smooth
tangential thickness is to consider this question for products of
the form $(L\# \Sigma^{2n-1})\times\R^k$, where $\Sigma^{2n-1}$ is
an exotic sphere.  If the order of $\Sigma^{2n-1}$ in the
Kervaire-Milnor group $\Theta_{2n-1}$ is prime to $p$ and
$k\geq 3$, then fairly standard considerations show that
$(L\#\Sigma^{2n-1})\times\R^k$ is
diffeomorphic to $L\times\R^k$ if and only if
$\Sigma^{2n-1}\times\R^k$ and $S^{2n-1}\times\R^k$ are
diffeomorphic.  Smooth tangential thickness for exotic spheres has
been fairly well understood for more than four decades (compare
\cite{Sch}; several individuals discovered these results
independently). In particular, if we combine these results with some
homotopy-theoretic input, we have the following:

\begin{prop}\

\begin{itemize}
\item[(i)] Suppose that $2n-1 = 2^j+1\geq 17$.  Then these is
a homotopy sphere $\Sigma^{2n-1}$ such that
$\Sigma^{2n-1}\times\R^{2n-7}$ is not diffeomorphic to
$S^{2n-1}\times\R^{2n-7}$ but their products with $\R$ are
diffeomorphic.

\item[(ii)] Suppose $2n-1 = 8k+1\geq 9$, and let
$\Sigma^{8k+1}$ be a homotopy sphere not bounding a spin manifold.
Then $\Sigma^{8k+1}\times\R^3$ and $S^{2k+1}\times\R^3$ are not
diffeomorphic,  but one can choose $\Sigma^{8k+1}$ such that their
products with $\R$ are diffeomorphic.
\end{itemize}
\end{prop}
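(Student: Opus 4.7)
The plan is to reduce both parts to the following standard sphere-of-origin principle for smooth tangential thickness of exotic spheres, which goes back to \cite{Sch}. For a homotopy $m$-sphere $\Sigma^m$ with stably trivial tangent bundle (automatic in our dimensions), a tangential homotopy equivalence $f\colon\Sigma\to S^m$ has normal invariant $\eta(f)\in\pi_m(G/O)$ coming from a well-defined class $\alpha_\Sigma\in\pi_m(SG)\cong\pi_m^S$, determined modulo $\mathrm{Im}\,J$. The smooth analog of Proposition~3.2---proved via the smooth proper $h$-cobordism theorem together with smooth block bundle technology \cite{RS1}--\cite{RS3} as adapted in \cite{Sch}---says that for $k\geq 3$ with $m+k\geq 6$, $\Sigma\times\R^k\cong S^m\times\R^k$ if and only if some representative of $\alpha_\Sigma$ lifts along $\pi_m(SG_k)\to\pi_m(SG)$. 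Combining Diagram~3.0 with the James splitting translates this lifting (modulo $\mathrm{Im}\,J$) into desuspension of $\alpha_\Sigma$ to $\pi_{m+k-1}(S^{k-1})$. Each part therefore reduces to identifying a homotopy sphere whose associated stable class has a prescribed sphere of origin.

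For part~(i) with $m=2n-1=2^j+1$ and $j\geq 4$, I want $\alpha_\Sigma\in\pi_{2^j+1}^S$ that desuspends to $\pi_{4n-8}(S^{2n-7})$ but not to $\pi_{4n-9}(S^{2n-8})$. Since $2^j+1$ is not a Kervaire dimension for $j\geq 4$, the Kervaire--Milnor map $\Theta_{2n-1}\twoheadrightarrow\pi_{2n-1}^S/\mathrm{Im}\,J$ is onto and realizability is not an issue. To exhibit the class itself I work $2$-locally in the EHP sequence: in the ``Hopf-invariant'' dimensions $2^j+1$, Adams's theorem on the non-existence of Hopf invariant one elements forces a nontrivial $EHP$-obstruction to desuspension at exactly this codimension, and the relevant secondary Whitehead-type generator is visible in Toda's tables \cite{Tod}. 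For part~(ii), pick $\Sigma^{8k+1}\in\Theta_{8k+1}$ realizing the Adams element $\mu_{8k+1}\in\pi_{8k+1}^S$ modulo $\mathrm{Im}\,J$; non-vanishing of the real $e$-invariant detecting $\mu_{8k+1}$ is the Anderson--Brown--Peterson criterion for failing to bound a spin manifold, so such a $\Sigma$ exists. Since $\mu_{8k+1}$ arises from the Bott generator smashed with the Hopf map $\eta$, it desuspends to $\pi_{8k+4}(S^3)$ but not to $\pi_{8k+3}(S^2)$ (the latter being blocked by a primary Hopf invariant), and the reduction delivers both the non-diffeomorphism $\Sigma\times\R^3\not\cong S^{8k+1}\times\R^3$ and the diffeomorphism $\Sigma\times\R^4\cong S^{8k+1}\times\R^4$.

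The main obstacle is making the ``lift $\iff$ desuspend'' step rigorous at the prime~$2$, since Proposition~3.4 addresses only odd primes. For part~(ii) this is standard because the desuspension behavior of $\mu_{8k+1}$ is classical and computable directly from its Bott-theoretic origin. The deeper difficulty lies in part~(i): one needs the \emph{exact} sphere of origin, not merely an upper bound, of a stable class in $\pi_{2^j+1}^S$, and it is precisely here that the dimensional restriction $2n-1=2^j+1$ is essential, since Adams's Hopf invariant one theorem is what furnishes a non-desuspension obstruction in exactly these dimensions. These $2$-primary inputs lie outside the odd-primary machinery of Sections~5--6 of this paper and are imported from classical unstable homotopy theory (\cite{James}, \cite{Tod}) and the foundational work in \cite{Sch}.
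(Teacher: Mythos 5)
Your overall framework---reduce to the sphere-of-origin problem for the stable class $\alpha_\Sigma\in\pi_{2n-1}^S$ associated to $\Sigma$ via the smooth analog of Proposition 3.2, then use the $SG_k$/desuspension translation---is the same as the paper's, and your treatment of part (ii) via the Adams $\mu$-family is essentially the argument the paper gives (the paper cites \cite{KwaSch1} for the desuspension of $\mu_k$ to $S^3$ and blocks desuspension to $S^2$ by the $\eta$-divisibility criterion of Proposition 3.5; your ``Bott generator smashed with $\eta$'' description of $\mu_{8k+1}$ is imprecise but not the core of the argument).

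Part (i), however, has a genuine gap. You never identify a specific stable class whose exact sphere of origin is $S^{2n-7}$; instead you gesture at Adams's Hopf-invariant-one theorem as ``forcing a nontrivial $EHP$-obstruction at exactly this codimension'' and claim the generator is ``visible in Toda's tables.'' Neither claim holds up. The Hopf-invariant-one theorem is about Whitehead squares and the unstable element of Hopf invariant one in $\pi_{2m-1}(S^m)$; it does not on its own produce a class in the $(2^j+1)$-stem together with lower and upper bounds on its sphere of origin, and the connection you assert between $2^j+1$ and that theorem does not hold at the stems in question. Moreover Toda's tables \cite{Tod} stop far short of stem $33 = 2^5+1$, so your proposed citation cannot carry the claim for $j\geq 5$. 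The paper's proof instead selects the concrete class $\eta_j\eta$ (with $\eta_j\in\pi_{2^j}^S$ the Mahowald element detected by $h_1h_j$, per the citation to \cite{Rav1}), and imports from Mahowald's \emph{Image of $J$ in the $EHP$ sequence} \cite{Mah2} the precise statement that $\eta_j\eta$ desuspends to $S^{2^j-4}$ but not to $S^{2^j-5}$. That specific 2-primary input is what pins down the tangential thickness, and without identifying such a class and supplying its sphere of origin your argument does not close.
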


The first statement follows by choosing $\Sigma$ so that its
Pontrjagin-Thom invariant of $\Sigma$ in the group
$\pi_{2n-1}/\mathrm{Image}\;J$ is
the $\eta_j\eta$, where $\eta_j\in \pi_{2^j}$ is the
Mahowald element (see \cite{Rav1}, Thm. 1.5.27(a), p. 38) and
$\eta\in\pi_1$ is the stabilization of the Hopf map from $S^3$
to $S^2$.
The proofs that the products are diffeomorphic or not diffeomorphic
are based
upon results from \cite{Mah2}, which show that $\eta_j\eta$
desuspends to $S^{2^j-4}$ but does not
desuspend to $S^{2^j-5}$({\it e.g.\/}, see Table 1 on pp. 74--75 and
Table 4.4 on p. 11;
in the setting of the previous citation from \cite{Rav1}, the class
$\eta_j\eta$ corresponds to
$$\nu^2~~\in~~E_1^{2^j+1,2^j-5}~~\approx~~\pi_6~~\approx~~\Z_2$$
in the 2-primary $EHP$ spectral sequence described in \cite{Rav1}).
The second statement follows from the fact
that the elements $\mu_k\in\pi_{8k+1}$ desuspend to $S^3$ but not
$S^2$ (see \cite{KwaSch1} for desuspension to $S^3$; as in Section 3,
if $\mu_k$ desuspended to $S^2$ it would be divisible by $\eta$ in
$\pi_*$).  As noted above, these yield diffeomorphism and
nondiffeomorphism results for smooth and fake lens spaces.

\subsection{Twisted Tangential Thickness}

The previously mentioned theorem of B. Mazur (see \cite{Maz}) has
a natural generalization to vector bundles (\textit{i.e.,} Theorem
$2$ in \cite{Maz}).

\begin{thm} {\rm (Mazur)}\quad
Let $E$ and $F$ be the total spaces of $\R^k$ bundles over smooth
closed manifolds $M^n$ and $N^n$ with $k\geq n+2$.  Then $E$
and $F$ are tangentially homotopy equivalent if and only if $E$
and $F$ are diffeomorphic.
\end{thm}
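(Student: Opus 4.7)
The easy direction is immediate: a diffeomorphism induces an isomorphism of tangent bundles and so is tangential. I focus on the converse. Write $\pi_M\colon E \to M$ and $\pi_N\colon F \to N$ for the bundle projections, $s_M, s_N$ for the zero sections, and $\xi, \eta$ for the bundles themselves, so $T(E) = \pi_M^*(TM \oplus \xi)$ and $T(F) = \pi_N^*(TN \oplus \eta)$.

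Given a tangential homotopy equivalence $h\colon E \to F$, I would first produce a homotopy equivalence $f = \pi_N \tinycirc h \tinycirc s_M\colon M \to N$ at the base level. Pulling the stable isomorphism $h^*T(F) \cong T(E)$ back along $s_M$, and using that $h \tinycirc s_M$ is homotopic to $s_N \tinycirc f$ (since $s_N \tinycirc \pi_N$ is homotopic to the identity on $F$), gives a stable isomorphism
$$TM \oplus \xi \;\cong\; f^*TN \oplus f^*\eta \quad \text{over } M.$$
Since $\dim F = n+k \geq 2n+2$, the Whitney embedding theorem lets me improve $h \tinycirc s_M$ within its homotopy class to a smooth embedding $g\colon M \hookrightarrow F$. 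Its normal bundle $\nu_g$ satisfies $g^*T(F) \cong TM \oplus \nu_g$, and $g \simeq s_N \tinycirc f$, so combining with the previous display yields $TM \oplus \nu_g \cong TM \oplus \xi$ stably and hence $\nu_g \cong \xi$ stably over $M$. Since both bundles have rank $k \geq n+2 = \dim M + 2$, the stabilization map $[M, BO(k)] \to [M, BO]$ is a bijection, and we may conclude $\nu_g \cong \xi$ as honest rank $k$ vector bundles.

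The hard step is to convert the local tubular neighborhood information into a global identification of $F$ with the total space of $\nu_g$. Choose a closed tubular neighborhood $U$ of $g(M)$; then $U$ is diffeomorphic to the closed disk bundle of $\nu_g$, and its boundary $\partial U$ is the sphere bundle $S(\nu_g)$. Set $W = F \setminus \operatorname{int}(U)$. The goal becomes constructing a diffeomorphism $W \cong S(\nu_g) \times [0,\infty)$ which is the identity on the boundary; splicing this to $U$ then gives $F \cong $ total space of $\nu_g \cong $ total space of $\xi \cong E$. Since $M$ is a smooth deformation retract of $F$, a transverse adjustment of the retraction shows $W$ deformation retracts to $\partial U$, and the single end of $W$ coincides with the tame, fiber-bundle end of $F$. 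I would then invoke Siebenmann's open collaring theorem to produce the required product structure. Verifying the hypotheses of that theorem --- tameness of the end and vanishing of the Siebenmann obstruction in this particular geometric setting --- is where I expect the main difficulty to lie; the rest of the argument uses only standard Whitney-range transversality and stable bundle comparison.
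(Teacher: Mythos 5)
The paper gives no proof here; it simply cites Theorem~2 of Mazur's 1961 announcement, where the argument is the infinite-repetition (``swindle'') trick: embed the disk bundle $D(\xi)$ in $F$ and $D(\eta)$ in $E$ with the correct normal data, iterate these embeddings to produce intertwined exhaustions of $E$ and $F$ by compact codimension-zero pieces, and read the diffeomorphism off the direct limit. No end theory and no $h$- or $s$-cobordism theorem are required, and indeed both postdate Mazur. Your tubular-neighborhood-plus-end-theory route is genuinely different and can be pushed through, but two of its steps are not the formalities you present them as, and you have identified the wrong step as the hard one.

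The assertion that ``$W$ deformation retracts to $\partial U$'' does not follow from any ``transverse adjustment of the retraction''; it is a substantive claim, and it is really where the tangential hypothesis enters a second time. The honest argument chooses $R$ large enough that $U\subset \mathrm{Int}\,D_R(\eta)$ and shows the compact cobordism $V=D_R(\eta)\smallsetminus\mathrm{Int}(U)$ between the sphere bundles $S(\nu_g)$ and $S(\eta)$ is an $h$-cobordism: one inclusion comes from the homotopy equivalence $U\hookrightarrow D_R(\eta)$ together with a van Kampen check that $\pi_1(S(\nu_g))\to\pi_1(V)$ is an isomorphism, and the other inclusion then follows by Poincar\'e--Lefschetz duality on $V$. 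More seriously, once $V$ is known to be an $h$-cobordism its Whitehead torsion need not vanish, so $V$ alone is not a product, and $W\cong S(\nu_g)\times[0,\infty)$ does not follow from the $s$-cobordism theorem. The correct resolution is that $V$ glued to the infinite collar $S(\eta)\times[0,\infty)$ is still an open collar because stacking $V$ with its inverse $h$-cobordism infinitely often absorbs the torsion --- but that is itself a swindle, so you have not avoided Mazur's trick, only relocated it. Siebenmann's open-collar theorem does package this (and the tameness hypotheses you worry about are automatic, since the end of $W$ literally is the product end $S(\eta)\times[0,\infty)$ inherited from $F$), but it also imposes $\dim W\geq 6$, whereas Mazur's direct swindle has no dimension floor. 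In short: your skeleton is sound, the genuinely hard step is the one you waved through, the step you flagged as hard is the easy one, and the original swindle is both more elementary and cleaner.
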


This leads to an obvious notion of \textbf{twisted tangential
thickness} which in turn has nontrivial application to the geometry
of nonnegatively curved manifolds ({\textit{cf.}} \cite{BKS}).
Namely, let $\Theta_7\cong\Z_{28}$ be the group of homotopy 7-spheres
$\Sigma^7(d)$, $d=0,\ldots ,27$.  It is proved in \cite{BKS} that
although $\Sigma^7(d)\times\mathbb{CP}^2$ falls into one
tangential homotopy type, there are four oriented (three
unoriented) diffeomorphism classes of these manifolds, each
admitting a nonnegatively curved metric by the main result of
\cite{GroZi}. Similar results hold for manifolds of the form
$\Sigma^7\times\mathbb{CP}^{2n}$ for all $n$ such that $n\not\equiv
0$ mod 3 \cite{Sch2}.

It turns out that these four different manifolds have twisted
tangential thickness 2; \textit{i.e.}, the corresponding total
spaces of the nontrivial $\R^2$ bundle are all diffeomorphic.
This result gives the first examples of manifolds with complete
enumeration of the different souls for Riemannian metrics which
admit metrics of nonnegative sectional curvature:

\noindent
\textbf{Theorem.} (Thm. 6 in \cite{BKS}) \textit{The total space
$N$ of any nontrivial complex line bundle over
$S^7\times\mathbb{CP}^2$ admits three complete nonnegatively
curved metrics with pairwise nondiffeomorphic souls --- $S_0$, $S_1$
and $S_2$ --- such that for any complete nonnegatively curved metric
on $N$ with soul $S$, there exists a self-diffeomorphism of $N$
taking $S$ to some $S_i$.}

\textbf{Remarks.} {\bf 1. }\quad
Previously M. \"Ozayd{\i}n and G. Walschap described examples
of vector bundle total spaces which support no complete metrics with
nonnegative sectional curvature \cite{OW}.

{\bf 2. }\quad It is worthwhile to note that the twisted
tangential thickness of the non-diffeomorphic manifolds
$\Sigma^7(d)\times\mathbb{CP}^2$ is equal to 2, but the standard
(untwisted) tangential thickness is equal to 3.

\end{document}